\newtheorem{theorem}{Theorem}[section]
\newtheorem{proposition}[theorem]{Proposition}
\newtheorem{lemma}[theorem]{Lemma}
\newtheorem{remark}[theorem]{Remark}
\theoremstyle{definition}
\theoremstyle{property}
\newcommand{\contraction}[5][1ex]{%
  \mathchoice
    {\contraction@\displaystyle{#2}{#3}{#4}{#5}{#1}}%
    {\contraction@\textstyle{#2}{#3}{#4}{#5}{#1}}%
    {\contraction@\scriptstyle{#2}{#3}{#4}{#5}{#1}}%
    {\contraction@\scriptscriptstyle{#2}{#3}{#4}{#5}{#1}}}%
\newcommand{\contraction@}[6]{%
  \setbox0=\hbox{$#1#2$}%
  \setbox2=\hbox{$#1#3$}%
  \setbox4=\hbox{$#1#4$}%
  \setbox6=\hbox{$#1#5$}%
  \dimen0=\wd2%
  \advance\dimen0 by \wd6%
  \divide\dimen0 by 2%
  \advance\dimen0 by \wd4%
  \vbox{%
    \hbox to 0pt{%
      \kern \wd0%
      \kern 0.5\wd2%
      \contraction@@{\dimen0}{#6}%
      \hss}%
    \vskip 0.5ex
    \vskip\ht2}}
\newcommand{\contraction@@}[3][0.05em]{%
  \hbox{%
    \vrule width #1 height 0pt depth #3%
    \vrule width #2 height 0pt depth #1%
    \vrule width #1 height 0pt depth #3%
    \relax}}
\DeclareFontFamily{OT1}{rsfs}{}
\DeclareFontShape{OT1}{rsfs}{n}{it}{<-> rsfs10}{}
\DeclareMathAlphabet{\curly}{OT1}{rsfs}{n}{it}
\newcommand\A{\mathcal A}
\newcommand\I{\curly I}
\newcommand\LL{\mathbb L}
\renewcommand\S{\mathcal S}
\renewcommand\O{\mathcal O}
\newcommand\PP{\mathbb P}
\newcommand\Db{\overline{\!D}}
\newcommand\Mb{\,\overline{\!M}}
\newcommand\MMb{\,\overline{\!\mathcal M}}
\newcommand\Xb{\,\overline{\!X}}
\newcommand\XXb{\,\overline{\!\mathcal X}}
\newcommand\C{\mathbb C}
\newcommand\II{\mathbb I}
\newcommand\Q{\mathbb Q}
\newcommand\R{\mathbb R}
\newcommand\Z{\mathbb Z}
\newcommand\m{\mathfrak m}
\newcommand{\Rt}[1]{\stackrel{#1\,}{\longrightarrow}}
\newcommand\To{\longrightarrow}
\newcommand\into{\hookrightarrow}
\newcommand\Into{\ar@{^(->}[r]<-.3ex>}
\renewcommand\_{^{}_}
\newcommand\bull{{\scriptscriptstyle\bullet}}
\newcommand\udot{^\bull}
\newcommand\rk{\operatorname{rank}}
\newcommand\tr{\operatorname{tr}}
\newcommand\ev{\operatorname{ev}}
\renewcommand\div{\operatorname{div}}
\newcommand\id{\operatorname{id}}
\newcommand\Hom{\operatorname{Hom}}
\renewcommand\hom{\curly H\!om}
\newcommand\Ext{\operatorname{Ext}}
\newcommand\ext{\curly Ext}
\newcommand\ob{\operatorname{ob}}
\newcommand\Pic{\operatorname{Pic}}
\newcommand\Spec{\operatorname{Spec}\,}
\newcommand\Hilb{\operatorname{Hilb}}
\newcommand\beq[1]{\begin{equation}\label{#1}}
\newcommand\eeq{\end{equation}}
\newcommand\beqa{\begin{eqnarray*}}
\newcommand\eeqa{\end{eqnarray*}}
\DeclareRobustCommand{\SkipTocEntry}[4]{}
\begin{document}

\title[Curve counting on surfaces I: theory]{Reduced classes and curve counting on surfaces I: theory \vspace{-1mm}}

\author[M. Kool and R. P. Thomas]{Martijn Kool and Richard Thomas  \\ with an Appendix written with Dmitri Panov \vspace{-2mm}}

\begin{abstract}
We develop a theory of \emph{reduced} Gromov-Witten and stable pair invariants of surfaces and their canonical bundles.

We show that classical Severi degrees are special cases of these invariants.
This proves a special case of the MNOP conjecture, and allows us to generalise the G\"ottsche conjecture to the non-ample case. In a sequel we prove this generalisation.

We prove a remarkable property of the moduli space of stable pairs on a surface. It is the zero locus of a section of a bundle on a smooth compact ambient space, making calculation with the reduced virtual cycle possible.
\vspace{-6mm}
\end{abstract}

\maketitle
\thispagestyle{empty}
\renewcommand\contentsname{\vspace{-8mm}}
\tableofcontents

\section{Introduction}

\subsection*{Motivation}
Fix a nonsingular projective surface $S$ and a homology class $\beta\in H_{2}(S,\Z)$. There are various ways of counting holomorphic curves in $S$ in class $\beta$; in this paper we focus on Gromov-Witten invariants \cite{Beh, LT} and stable pairs \cite{PT1, Ott}. Since these are deformation invariant they must vanish in class $\beta$ if there exists a deformation of $S$ for which the Hodge type of $\beta$ is not $(1,1)$. We can see the origin of this vanishing without deforming $S$ as follows.

For simplicity work in the simplest case of an embedded curve $C\subset S$ with normal bundle $N_C=\O_C(C)$. As a Cartier divisor, $C$ is the zero locus of a section $s_C$ of a line bundle $L:=\O_S(C)$, giving the exact sequence
$$
0\To\O_S\Rt{s_C} L\To N_C\To0.
$$
The resulting long exact sequence describes the relationship between first order deformations and obstructions $H^0(N_C),\,H^1(N_C)$ of $C\subset S$, and the deformations and obstructions $H^1(\O_S),\,H^2(\O_S)$ of the line bundle $L\to S$:
\begin{align} \nonumber
0\To H^0(L)\big/\langle s_C\rangle &\To H^0(N_C)\To H^1(\O_S)\To H^1(L) \\
& \To H^1(N_C)\To H^2(\O_S)\To H^2(L)\To0. \label{semireg}
\end{align}
The resulting ``semi-regularity map" \cite{KS} $H^1(N_C)\to H^2(\O_S)=H^{0,2}(S)$ takes obstructions to deforming $C$ to the ``cohomological part" of these obstructions. Roughly speaking, if we deform $S$, we get an associated obstruction in $H^1(N_C)$ to deforming $C$ with it; its image in $H^{0,2}(S)$ is the $(0,2)$-part of the cohomology class $\beta\in H^2(S)$ in the deformed complex structure. Thus it gives the obvious cohomological obstruction to deforming $C$: that $\beta$ must remain of type $(1,1)$ in the deformed complex structure on $S$.


In particular, when $S$ is fixed, obstructions lie in the kernel of $H^1(N_C)\to H^2(\O_S)$. More generally, if we only consider deformations of $S$ for which $\beta$ remains $(1,1)$ then the same is true. And when $h^{0,2}(S)>0$ but $H^2(L)=0$, the existence of this trivial $H^{0,2}(S)$ piece of the obstruction sheaf guarantees that the virtual class vanishes.

So it would be nice to restrict attention to surfaces and classes $(S,\beta)$ inside the Noether-Lefschetz locus,\footnote{The locus of surfaces $S$ for which $\beta\in H^2(S)$ has type $(1,1)$; for more details see \cite{Voi, MP}.} defining a new obstruction theory using only the kernel of the semi-regularity map.\footnote{For embedded curves this means we use the obstruction space $H^1(L)$ to deforming sections of $L$. We have been able to remove the obstructions $H^2(\O_S)$ to deforming $L$ since the space of line bundles is smooth over the Noether-Lefschetz locus.} Checking that this kernel really defines an obstruction theory in the generality needed to define a virtual cycle -- i.e. for deformations to all orders, over an arbitrary base, of possibly non-embedded curves -- has proved difficult; there is a hotchpotch of results in different cases \cite{BF2, BL1, Blo, BuF, Don, IM, KL, Lee, Li, Liu, Man, MP, MPT, OP, Ran, Ros, Sch, STV}. Here we give quite a general construction using a mixture of some of these methods.\footnote{Since this paper appeared Jon Pridham has found a more general solution \cite{Pri} using derived deformation theory, $\infty$-stacks, etc. His result is broader and more natural, but our methods are much more elementary.}

\subsection*{Our results}\ \smallskip

\noindent\textbf{Surfaces.}
For stable pairs on $S$ we get optimal results. We show that the kernel of the semi-regularity map gives a reduced perfect obstruction theory, virtual cycle and invariants whenever
\beq{condition}
H^2(L)=0\text{ for effective line bundles $L$ with }c_1(L)=\beta.
\eeq
Equivalently, by Serre duality, the condition is that there is no curve in class $\beta$ which is contained in a canonical divisor of $S$.
This condition is necessary to ensure the semi-regularity map \eqref{semireg} is surjective.

For Gromov-Witten theory,
multiple covers complicate the situation, but we are able to prove the same result for the moduli space of stable maps when
\beq{condition2}
H^1(T_S)\Rt{\cup\beta}H^2(\O_S) \text{ is surjective}.
\eeq
Here $\beta\in H^1(\Omega_S)$ and we use the pairing $\Omega_S\otimes T_S\to\O_S$.
Condition \eqref{condition2} implies \eqref{condition}: for any $L=\O(C)$ in class $\beta$, the map $\cup\beta$ factors through $$H^1(T_S)\To H^1(\O_C(C))\To H^2(\O_S),$$ so surjectivity implies that $H^2(L)=0$ by the exact sequence \eqref{semireg}.

Condition \eqref{condition2} is a transversality assumption on the moduli space of surfaces $S$. It asks that the $h^{2,0}(S)$ equations cutting out the Noether-Lefschetz locus 
$$
\int_\beta\sigma_i=0, \qquad \{\sigma_i\colon i=1,\ldots,h^{2,0}(S)\}\mathrm{\ a\ basis\ for\ }H^{2,0}(S),
$$
are transverse to $0$. In particular if the moduli space of surfaces is smooth, it asks that the Noether-Lefschetz locus be smooth of the expected codimension $h^{2,0}(S)$. For example, we note that for degree $d \geq 4$ surfaces $S\subset\PP^3$, it is \emph{almost always} satisfied in a precise sense \cite[Section 3]{Ki}. \medskip

\noindent\textbf{Method.}
We embed $S$ as the central fibre of an \emph{algebraic twistor family}\footnote{This is an outrageous abuse of notation, motivated by the $S=K3$ case where it is a first order neighbourhood of the central fibre in the twistor family used in \cite{BL1}. It should also be noted that the family is not canonical, but involves choices.} $$\S_B\to B.$$ Here $B$ is a first order Artinian neighbourhood of the origin in a certain $h^{0,2}(S)$-dimensional family of first order deformations of $S$. 

In Section \ref{red} we show that Condition \eqref{condition2} implies that the relative moduli space of curves (stable maps or stable pairs) on the fibres of $\S_B$ is in fact the moduli space of curves on the central fibre. We then show that the natural perfect obstruction theory of the family (its relative perfect obstruction theory made absolute) is isomorphic to the kernel of the semi-regularity map on the standard obstruction theory. Thus the latter, which is canonical, can indeed be used as an obstruction theory, giving a definition of reduced curve counting invariants. These coincide with the usual invariants when $h^{0,2}(S)=0$.

This use of relative moduli spaces means that we require Condition \eqref{condition2} only for $\beta$. When \eqref{condition2} holds also for all $\beta'<\beta$
then the absolute moduli space of curves in $\S_B$ also coincides with the moduli space for $S$, and the ordinary Gromov-Witten invariants of the total space $\S_B$ can be expressed in terms of the reduced Gromov-Witten invariants of $S$ with so-called $K_S$-twisted $\lambda$-class insertions. (These are Chern classes of the virtual bundle whose fibre over $f\colon C\to S\subset\S_B$ is $R\Gamma(f^*K_S)$. For the K3 case see \cite{MPT}.) \medskip

\noindent\textbf{Threefolds.}
We also work on the Calabi-Yau 3-fold $$(X=K_S)\ \curvearrowleft\C^*,$$
always using Condition \eqref{condition2} in this case. Again we get a reduced obstruction theory, and use $\C^*$-localisation to define reduced residue Gromov-Witten and stable pair invariants. The former come entirely from the moduli space of stable maps to $S$ itself and include the reduced invariants of $S$. More generally they include some $\lambda$-classes twisted by $K_S$. The moduli space $P_X^{\C^*}$ of stable pairs fixed by the $\C^*$-action, however, contains stable pairs not scheme-theoretically supported on $S$, so the moduli space is bigger than the moduli space $P_S$ of stable pairs on $S$. But $P_S\subset P_X^{\C^*}$ forms a connected component, so the reduced residue invariants of $X$ contain a contribution coming entirely from $S$. This contribution includes the reduced invariants of $S$, and more generally signed virtual Euler characteristics of loci in $P_S$ satisfying incidence conditions. In the sequel \cite{KT2}, we describe conditions under which $P_S$ is all of $P_X^{\C^*}$. \medskip

\noindent\textbf{Insertions and Severi degrees.}
In Section \ref{insert} we develop a careful treatment of insertions in these theories. We then prove various folklore results about them, such as the link to the Picard variety and sublinear systems. (The existing literature only handles these issues within symplectic geometry.)

In Section \ref{nodal} we use this to show that with the right insertions, the reduced Gromov-Witten invariants recover the Severi degrees
\beq{Sevdeg}
n_\delta(L):=\deg\ \overline{\{C\in|L|\colon C \mathrm{\ has\ }\delta
\mathrm{\ nodes}\}}\ \subset\ |L|
\eeq
of appropriately ample linear systems $|L|$ on $S$.

In this case we are also able to show that only $P_S\subset P_X^{\C^*}$ contributes to the stable pair invariants of $X$. Therefore the 3-fold MNOP conjecture \cite{MNOP} applied to $X$ (and extended to the reduced $\C^*$-localised invariants) predicts that the $n_\delta(L)$ should also be expressible as a precise combination of reduced stable pair invariants of $S$. We prove this in Theorem \ref{GottchaP}.

This also points to a definition of \emph{virtual Severi degrees} in the non-ample case via reduced Gromov-Witten or stable pair invariants. Moreover, we give an extension of the G\"ottsche conjecture to these virtual numbers: i.e. that they should be computed by the G\"ottsche polynomials even when the latter are not obviously enumerative. In the sequel \cite{KT2} we compute the resulting stable pair invariants and prove this version of the conjecture.
\medskip

\noindent\textbf{Stable pairs as a zero locus.}
In Appendix \ref{Dmitri}, written with Dmitri Panov, we give an alternative, more direct construction of the reduced stable pair theory on a surface $S$, without reference to $\S_B$. This is achieved by realising the moduli space in the following way. We first take the zero locus of a natural section of a bundle over a smooth ambient space, then we take the zero locus of a section of another bundle over that.\footnote{After posting this paper we discovered that the first step here was used many years ago by D\"urr, Kabanov and Okonek \cite{DKO}, giving a description of the virtual cycle on the Hilbert scheme of curves in $S$. We then extend this to stable pairs over those curves.}

This is an unusual phenomenon. Having a \emph{local} description of a moduli space as the zero locus of a section of a bundle $E$ over a smooth ambient space $A$ with $$
\dim A-\rk E=v
$$
is basically equivalent to having a perfect obstruction theory of virtual dimension $v$.
But having such a description \emph{globally}, for some \emph{compact} $A$ and the same $v$, is extremely rare. It is also extremely desirable: it means the pushforward to $A$ of the virtual cycle is Poincar\'e dual to $c_{top}(E)$, with which one can try to calculate.

We exploit this in the sequel \cite{KT2} to calculate the reduced stable pair invariants in terms of universal formulae in topological numbers\footnote{In contrast the non-reduced stable pair invariants of $S$ do not have such a simple form in general, depending on the Seiberg-Witten invariants of $S$. See \cite{Ko}, where a duality formula is also obtained, and the MNOP conjecture is related to Taubes' SW=GW correspondence.} of $(S,\beta)$. In \cite{PTKKV} this also provides one of the foundations of a computation of the full stable pairs theory of the twistor family of a K3 surface. Via Pandharipande and Pixton's recent proof of the MNOP conjecture for many 3-folds, this then gives a proof of the famous KKV formula for the Gromov-Witten invariants of K3 surfaces in all genera, degrees and for all multiple covers.

Except for genus-0 Gromov-Witten calculations in complete intersections in convex varieties, we know of no other moduli problem where such direct calculation is possible. Usually obtaining explicit results is very complicated, involving various difficult degeneration and localisation tricks.
\medskip

\noindent\textbf{Organisation.}
The paper is organised as follows. Most results are proved twice, once for stable maps, and an analogous result for stable pairs.
We work out the reduced obstruction theories in Section \ref{red} assuming Condition \eqref{condition2}.
In Appendix \ref{Dmitri} we use an easier construction to show that Condition \eqref{condition2} can be replaced by \eqref{condition} for stable pairs on $S$ only. In Section \ref{invts} we define the corresponding reduced invariants for $S$, the reduced residue invariants of $X=K_S$, and we show the latter contain the information of the former. Section \ref{insert} deals with insertions and linear systems of curves in $S$. Section \ref{nodal} discusses the application to Severi degrees and the MNOP conjecture for reduced invariants with many point insertions.

In summary, we describe invariants incorporating the following.
\begin{itemize}
\item Reduced Gromov-Witten and stable pair invariants of $S$,
\item Gromov-Witten and stable pair invariants of $S$ when $H^{2,0}(S)=0$,
\item Reduced equivariant Gromov-Witten and stable pair invariants of $K_S$,
\item Equivariant invariants of $K_S$ when $H^{2,0}(S)=0$,
\item Reduced GW invariants of $S$ with $K_S$-twisted $\lambda$-class insertions,
\item Relative Gromov-Witten and stable pair invariants of $\S_B\to B$,
\item Absolute Gromov-Witten invariants of $\S_B$,
\item Absolute equivariant Gromov-Witten invariants of $K_{\S_B/B}$,
\item Severi degrees and virtual Severi degrees.
\end{itemize} \smallskip

\noindent \textbf{Acknowledgements}. This project originated with a suggestion of Daniel Huybrechts, for which we are very grateful. We would like to thank Nicolas Addington, Jim Bryan, Davesh Maulik, Vivek Shende, an excellent referee and particularly Rahul Pandharipande for their assistance. We owe an obvious \nopagebreak
intellectual debt to the papers \cite{BL1, MPT}. Both authors were supported by an EPSRC programme grant EP/G06170X/1.
\setlength{\textheight}{21cm}
\newpage

\subsection*{Notation}
Throughout we keep largely to the following notation. \vspace{2mm}

\begin{tabular}{ll}
$S$ & a smooth projective surface \\
$X,\ \Xb$ & the total space of the canonical bundle $K_S$ of $S$ and its \\ & projective completion $\PP(K_S\oplus\O_S)$ respectively \\
$V\supset B$ & choice \eqref{choice} of $h^{0,2}(S)$-dimensional subspace of $H^1(T_S)$ \\ & and first order thickening \eqref{SB} of its origin \\
$\S_B,\ \mathcal X_B,\ \XXb_B$ &  \eqref{SB} the algebraic twistor family of $S$ over the Artinian \\ & base $B$, its relative canonical bundle and completion \\
$\iota,\,q$ & inclusion map of $S$ into $X$, $\Xb$ and projection $q\colon X,\Xb\to S$ \\
$\beta$ & a class in $H^2(S,\Z)$, usually of Hodge type $(1,1)$ \\
$h$ & the arithmetic genus of curves in class $\beta$, determined by \\ & the adjunction formula $2h-2=\beta^2-\int_\beta c_1(S)$ \\
$L$ & a line bundle on $S$ with $c_1(L)=\beta$ \\
Condition \eqref{condition} & $H^2(L)=0$ for effective line bundles $L$ with $c_1(L)=\beta$ \\
Condition \eqref{condition2} & $H^1(T_S)\Rt{\cup\beta}H^2(\O_S)$ is surjective\\
$\gamma\_i$ & \eqref{gamm} integral basis $\gamma\_1,\ldots,\gamma\_{b_1(S)}$ of $H_1(S,\Z)/$torsion, \\ & oriented with respect to the complex structure on $H_1(S,\R)$ \\
div & \eqref{maps} divisor class in $S$ of a stable map \\
det & \eqref{maps} line bundle associated to above divisor class \\
$[\gamma]$ & Poincar\'e dual of homology class $\gamma$ \\
$\mathcal P$ & Poincar\'e line bundle on $S\times\Pic(S)$ \\
$\mathfrak t,\ t=c_1(\mathfrak t)$ & one dimensional irreducible representation of $T=\C^*$ of \\ & weight 1, and generator of $H^*(B\C^*,\Z)=\Z[t]$ respectively \\
$\Mb_{\!g,n}(S,\beta)$ & moduli space of stable maps from connected genus $g$ \\ & curves with $n$ marked points to $S$ in class $\beta$ \\
$\Mb_{\!g}(S,\beta)$ & as above but without marked points, i.e. $n=0$ \\
$\Mb_{\!g,n}(S,\PP^\delta)$ & \eqref{MbLdef} stable maps whose divisor class lies in a given linear \\ & system $\PP^\delta\subset|L|$ \\
$P_n(S,\beta)$ & moduli space of stable pairs $(F,s)$ on $S$ with curve class \\ & $\beta$ and holomorphic Euler characteristic $\chi(F)=n$ \\
$R_{g,\beta}$ & \eqref{Rgbeta} reduced Gromov-Witten invariant $\in\Q$ \\
$\curly R_{g,\beta}$ & \eqref{RgbetaX} reduced residue Gromov-Witten invariant $\in\Q(t)$ \\
$P_{n,\beta}^{red}$ & \eqref{SPS} reduced stable pair invariant $\in\Z$ \\
$\curly P_{\!n,\beta}^{red}$ & reduced residue stable pair invariant $\in\Z(t)$; see \eqref{pairsdefn} for \\ & $X$ and \eqref{Pres} for $S$ \\
$r_{g,\beta}$ & \eqref{redGV} reduced residue Gopakuma-Vafa BPS invariants \\
$g=g(C)$ & arithmetic genus of a curve $C$ \\
$\overline g:=g(\overline C)\quad\ \ $ & geometric genus = arithmetic genus of normalisation $\overline C$ \\
$n_\delta(L)$ & \eqref{Gtt} Severi degree counting $\delta$-nodal curves in \\ & $\delta$-dimensional linear subsystems $\PP^\delta$ of $|L|$ \\
$H_\beta\subset H_\gamma$ & \eqref{+A} $\Hilb_\beta(S)$ embedded in $\Hilb_{\gamma:=[A]+\beta}(S)$ by $C\mapsto C+A$
\end{tabular}
\setlength{\textheight}{19.5cm}

\section{Reduced obstruction theories}\label{red}

\subsection{The algebraic twistor family $\S_B$}

Fix a projective surface $S$, and let $\m$ denote the maximal ideal at the origin $0\in H^1(T_S)$. The first order neighbourhood of the origin
$$
\Spec\O_{H^1(T_S)}/\m^2
$$
has a cotangent sheaf whose restriction to the origin is $H^1(T_S)^*$. Over this Artinian space lies a tautological flat family of surfaces $\S$ with Kodaira-Spencer class the identity in $H^1(T_S)^*\otimes H^1(T_S)$ parametrising the extension
$$
0\To H^1(T_S)^*\otimes\O_S\To\Omega_\S|_S\To\Omega_S\To0.
$$

Now fix a class $\beta\in H^{1,1}(S)\cap(H^2(S,\Z)/$torsion) for which $H^1(T_S)\Rt{\cup\beta}H^2(\O_S)$ is a surjection. Picking a splitting we get a $h^{0,2}(S)$-dimensional subspace $V\subset H^1(T_S)$ such that
\beq{choice}
\cup\beta\colon V\to H^2(\O_S)
\eeq
is an isomorphism.
Restricting the family $\S$ to $V$ gives a flat family
\beq{SB}
\S_B\ \text{ over }\ B:=\Spec\O_V/\m^2=\Spec\!\big(\C\oplus V^*\big)
\eeq
and an exact sequence of K\"ahler differentials
\beq{cotangent}
0\To\Omega_B|_S\To\Omega_{\S_B}|_S\To\Omega_S\To0.
\eeq
In the first term we have suppressed the pullback map from the base; the result is $V^*\otimes\O_S$. The extension class of \eqref{cotangent} -- the Kodaira-Spencer class in $H^1(T_S\otimes V^*)=\Hom(V,H^1(T_S))$ of the family $\S_B$ -- induces an isomorphism
\beq{nondegen}
\xymatrix{
V\ar[r]\ar@/_{3ex}/[rr]_(.45)\simeq & H^1(T_S)\ar[r]^{\cup\beta\ } & H^2(\O_S).
}
\eeq
There is a canonical isomorphism \cite[Proposition 3.8]{Blo}
$$
H^2_{dR}(\S_B/B)\cong H^2(S,\C)\otimes_\C\O_B.
$$
So corresponding to $\beta\otimes1$ we get the \emph{horizontal lift of} $\beta$:
$$
\beta_B\in H^2_{dR}(\S_B/B).
$$
By projection we obtain a class $[\beta_B]^{0,2}\in H^2_{dR}(\S_B/B)/F^1H^2_{dR}(\S_B/B)$, where $F^1H^2_{dR}(\S_B/B)$ is the part of the Hodge filtration defined by $\Omega^{\ge1}_{\S_B/B}$. The scheme theoretic Noether-Lefschetz locus in $B$ is defined to be the zero locus of $[\beta_B]^{0,2}$. The family $\S_B$ was constructed precisely to ensure the following.

\begin{lemma} \label{NLlem}
The Noether-Lefschetz locus in $B$ is just the closed point $0\in B$.
\end{lemma}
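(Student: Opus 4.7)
The plan is to Taylor-expand the section $[\beta_B]^{0,2}$ of $H^2_{dR}(\S_B/B)/F^1H^2_{dR}(\S_B/B)$ to first order about $0\in B$ and show that its linear part is an isomorphism $V \to H^{0,2}(S)$; from this it follows that its zero scheme in $B$ is the reduced origin.

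First, since $B = \Spec\O_V/\m^2$ is Artinian with unique closed point $0$, the underlying set of the Noether-Lefschetz locus is automatically $\{0\}$. The real content is to check that the ideal generated by the components of $[\beta_B]^{0,2}$ equals $\m = V^* \subset \O_B$. Restricting to the central fibre, $H^2_{dR}(\S_B/B)/F^1$ becomes $H^2(S,\C)/F^1H^2(S,\C) = H^{0,2}(S) = H^2(\O_S)$, and $[\beta_B]^{0,2}|_0$ is just the $(0,2)$-projection of $\beta \in H^2(S,\C)$. Since $\beta$ has Hodge type $(1,1)$ on $S$, this vanishes, so $[\beta_B]^{0,2}$ already lies in $\m \otimes_\C H^{0,2}(S)$ and defines a $\C$-linear map $\nabla\colon V \to H^{0,2}(S)$.

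Next I would identify $\nabla$. Under the Gauss-Manin identification $H^2_{dR}(\S_B/B) \cong H^2(S,\C)\otimes_\C\O_B$ the horizontal lift $\beta_B$ is the constant section $\beta \otimes 1$, so all variation of $[\beta_B]^{0,2}$ across $B$ comes from the variation of the Hodge subbundle $F^1$. By Griffiths transversality the first-order variation of $F^1$ along a Kodaira-Spencer class $\kappa \in H^1(T_S)$ is, on the graded piece $F^1/F^2 = H^{1,1}$, given by cup product with $\kappa$. Applied to $\beta \in H^{1,1}$ this yields $\nabla(v) = \kappa(v)\cup\beta$ for $v\in V$, where $\kappa\colon V \to H^1(T_S)$ is the Kodaira-Spencer map of $\S_B\to B$.

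But the composition $V \Rt{\kappa} H^1(T_S) \Rt{\cup\beta} H^2(\O_S)$ is precisely the isomorphism \eqref{nondegen} that was built into the choice of $V$. So $\nabla$ is an isomorphism: picking a basis $\sigma_1,\dots,\sigma_k$ of $H^{0,2}(S)$ and writing $[\beta_B]^{0,2} = \sum_i f_i\,\sigma_i$ with $f_i \in \m$, the differentials $df_i(0)\in V^*$ form a basis of $V^* = \m$, so the $f_i$ generate $\m$ and their common zero scheme in $B$ is the reduced origin. The one step needing care is the Griffiths-transversality identification of $\nabla$ with Kodaira-Spencer cup product; everything else is linear bookkeeping in the Artinian ring $\O_B$.
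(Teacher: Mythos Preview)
Your proof is correct and follows essentially the same approach as the paper: both reduce to identifying the first-order part of $[\beta_B]^{0,2}$ with the Kodaira--Spencer cup product map $V\to H^1(T_S)\xrightarrow{\cup\beta}H^2(\O_S)$, which is the isomorphism \eqref{nondegen}. The only cosmetic difference is that the paper checks this direction-by-direction (for each nonzero $v\in V$ the subscheme $B_v\subset B$ is not in the Noether--Lefschetz locus, citing \cite[Proposition 4.2]{Blo} for the identification you obtain via Griffiths transversality), whereas you argue globally that the components of $[\beta_B]^{0,2}$ generate $\m$; these are equivalent.
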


\begin{proof}
Since $\beta$ has type $(1,1)$ on the central fibre $S$, $[\beta_B]^{0,2}$ certainly vanishes at the origin $0$. Next pick any nonzero tangent vector $v\in V$, thus defining a subscheme $B_v\subset B$ by intersecting $B$ with the span $\C v\subset V$ (equivalently, define $B_v$ via the ideal $\langle v\rangle^\perp\subset V^*\subset\O_B$). By \cite[Proposition 4.2]{Blo}, $B_v$ lies in the Noether-Lefschetz locus if and only if
\beq{zero?}
\nabla(\beta)=0\ \text{ in } H^2(\O_S).
\eeq
Here $\nabla$ \cite[(4.1)]{Blo} is the map given by cup product with the Kodaira-Spencer class of $\S_{B_v}$, i.e. the image of $v\in V$ under the first arrow of \eqref{nondegen}. By \eqref{nondegen} then, \eqref{zero?} does not hold, so $B_v$ is not in the Noether-Lefschetz locus.
\end{proof}

Let $j\colon S\into\S_B$ denote the inclusion of the central fibre, and denote by $\Mb_{\!g,n}(\S_B/B,\beta_B)\to B$ the moduli space of stable maps of connected genus $g$ curves with $n$ marked points to the fibres of $\S_B\to B$. 
\begin{proposition} \label{stablemapiso}
Recall that we are assuming Condition \eqref{condition2}. Then the natural morphism of stacks $j_*\colon\Mb_{\!g,n}(S, \beta) \to\Mb_{\!g,n}(\S_B/B, \beta_B)$ is an isomorphism.
\end{proposition}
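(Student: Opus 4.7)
The plan is to prove $j_*$ is an isomorphism by showing that the target moduli stack is supported scheme-theoretically over the closed point $0\in B$, at which point it is canonically identified with the source via base change along $\{0\}\hookrightarrow B$.

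Since $j\colon S\hookrightarrow\S_B$ is a closed immersion over $B$, post-composition with $j$ is a monomorphism of moduli functors and it factors $j_*$ as
\[
\Mb_{g,n}(S,\beta)\ \stackrel{\sim}{\longrightarrow}\ \Mb_{g,n}(\S_B/B,\beta_B)\times_B\{0\}\ \hookrightarrow\ \Mb_{g,n}(\S_B/B,\beta_B),
\]
the first arrow being the standard base-change isomorphism. The proposition therefore reduces to proving that the structure morphism $\Mb_{g,n}(\S_B/B,\beta_B)\to B$ factors scheme-theoretically through $0$: for every test scheme $T\to B$ and every $T$-family of stable maps $f\colon\mathcal C\to\S_B\times_BT$ of class $\beta_B$, the morphism $T\to B$ must land at $0\in B$.

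To force this, I would extract from $f$ a line bundle $L_T$ on $\S_B\times_BT$ whose first Chern class equals $\beta_B|_T$. Since $S$ is a smooth surface, the image cycle $f_*[\mathcal C]$ is an effective divisor (the $\operatorname{div}/\det$ construction of \eqref{maps}), and in families this should produce a relative effective Cartier divisor on $\S_B\times_BT/T$ with associated line bundle $L_T$. Because $c_1(L_T)$ is of type $(1,1)$ it lies in $F^1H^2_{dR}(\S_B\times_BT/T)$, and tracing through the canonical flat trivialization $H^2_{dR}(\S_B/B)\cong H^2(S,\C)\otimes_\C\O_B$ shows that the $(0,2)$-part of $\beta_B|_T$ vanishes. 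This is precisely the equation cutting out the scheme-theoretic Noether--Lefschetz locus pulled back to $T$, so Lemma \ref{NLlem} forces $T\to B$ to be the constant map to $0$, as required.

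The main obstacle I anticipate is the divisor/line bundle construction itself: producing $\operatorname{div}(f)$ as a bona fide relative effective Cartier divisor for an arbitrary family of stable maps—handling contracted components, multiple covers and reducible domains—and confirming that its cohomology class really is $\beta_B|_T$. Once $L_T$ is produced with the correct Chern class, the Hodge-theoretic application of Lemma \ref{NLlem} is essentially mechanical.
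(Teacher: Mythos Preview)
Your strategy is the same as the paper's: base-change gives $\Mb_{g,n}(\S_B/B,\beta_B)\times_B\{0\}\cong\Mb_{g,n}(S,\beta)$, so it suffices to show that any $B$-family of stable maps forces the base to land in the Noether--Lefschetz locus, which by Lemma~\ref{NLlem} is scheme-theoretically $\{0\}$.

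The only difference is in how you place $\beta$ inside $F^1H^2_{dR}$. You propose to go through the $\mathrm{div}/\det$ construction to produce a relative line bundle $L_T$ and then use that $c_1(L_T)\in F^1$; the paper instead invokes Bloch \cite[Proposition~5.6]{Blo}, which \emph{directly} constructs the pushforward cycle class $h_*[C]\in F^1H^2_{dR}(\S_A/A)$ from the map $h\colon C\to\S_A$ itself. This neatly sidesteps exactly the obstacle you anticipated---handling contracted components, multiple covers, and reducible domains in the relative divisor construction over an Artinian base---and also avoids having to argue separately that the Chern class of your family line bundle coincides with the horizontal lift $\beta_A$ under the Gauss--Manin trivialisation. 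Your route would work once those details are nailed down, but Bloch's result makes them unnecessary.
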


\begin{proof}
Since $\Mb_{\!g,n}(\S_B/B,\beta_B)\times_B\{0\}\cong
\Mb_{\!g,n}(S,\beta)$ we need only prove the following.

Suppose we have an Artinian scheme $A$ with a morphism to $B$, a proper flat family $C \to A$ and a $B$-morphism $h\colon C \to \S_B$ which pulls back to a stable map $h_{0} : C_{0} \to S$ satisfying $h_{0*}[C_{0}] = \beta$. Then we want to show that $A\to B$ factors through $0 \in B$.

Now define $\S_A/A:=\S_B\times_BA$ with horizontal class
$$
\beta_A:=\beta\otimes1\in H^2_{dR}(\S_A/A)\cong H^2(S,\C)\otimes_\C\O_A.
$$
It is the pullback of $\beta_B$ via $A\to B$. \cite[Proposition 5.6]{Blo} defines the class $h_*[C] = \beta_A\in F^1H^{2}_{dR}(\S_A/A)$. In particular $[\beta_A]^{0,2}\in H^{2}_{dR}(\S_A/A)/
F^1H^{2}_{dR}(\S_A/A)$
is zero, so that the image of $A$ lies in the zero locus of $[\beta_B]^{0,2}$, which by Lemma \ref{NLlem} is scheme theoretically just the closed point $0\in B$.
\end{proof}

We are also interested in the threefold $X=K_S$ that is the total space of the canonical bundle of $S$. For technical reasons it is often convenient to work on its projective completion $\Xb:= \mathbb{P}(K_S \oplus \O_S)$.

Let $\beta \in H^{2}(S)$ and $\iota\colon S \hookrightarrow \Xb$ be the inclusion of the zero-section. Since $\Xb$ is a $\mathbb{P}^{1}$-bundle over $S$, there are canonical isomorphisms $H^{1}(T_{\Xb}) \cong H^{1}(T_S)$ and $H^{i,j}(S)\cong H^{i+1,j+1}(\Xb)$ when $i+j=2$, intertwining $\cup \beta$ with $\cup\,\iota_{*} \beta$. Associated to the family $\S_B\to B$ \eqref{SB} we also get families of 3-folds $\mathcal X_B\to B$ and $\XXb_B\to B$, with natural inclusions from $\S_B$ which we also denote by $\iota$. As before we let $j$ denote any of the inclusions of the central fibres $S,X,\Xb$ into the families $\S_B,\mathcal X_B,\XXb_B$.

In addition to the moduli space of stable maps $\Mb_{\!g,n}(\Xb,\iota_*\beta)$ to $\Xb$, we will also want to use moduli spaces of stable pairs \cite{PT1} on $S$ and $\Xb$.

Denote by $P_{n}(\Xb, \iota_{*} \beta)$ the (fine) moduli space of stable pairs $(F,s)$ on $\Xb$ with $ch_2(F) = \iota_{*} \beta$ and holomorphic Euler characteristic $\chi(F) = n$. Similarly
$P_{n}(\XXb_B / B, \iota_{*} \beta_B)$ is
the relative moduli space of stable pairs on the fibres of $\XXb_B\to B$. We can repeat these definitions for $S$ and $\S_B/B$ for stable pairs $(F,s)$ with $c_1(F) = \beta$ and $\chi(F) = n$.

\begin{proposition} \label{stablepairiso}
Assuming Condition \eqref{condition2}, the natural morphisms
\begin{enumerate}
\item $j_*\colon\Mb_{\!g,n}(S, \beta) \to\Mb_{\!g,n}(\S_B/B, \beta_B)$,
\item $j_*\colon\Mb_{\!g,n}(\Xb,\iota_*\beta) \to\Mb_{\!g,n}(\XXb_B/B,
\iota_*\beta_B)$,
\item $j_*\colon P_n(S, \beta) \to P_n(\S_B/B,\beta_B)$, and
\item $j_*\colon P_{n}(\Xb, \iota_{*} \beta) \to P_{n}(\XXb_B / B, \iota_{*} \beta_B)$
\end{enumerate}
are isomorphisms.
\end{proposition}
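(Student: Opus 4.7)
The plan is to prove all four cases by the template established in Proposition \ref{stablemapiso}: given an Artinian test scheme $A\to B$ together with a family of stable objects over $A$ in the pulled-back class, show that $A\to B$ factors through $0\in B$. Case (1) is literally Proposition \ref{stablemapiso}. In each of the remaining cases I would extract a cycle class from the stable object in (relative) de Rham cohomology of the total space of the family, apply Bloch's cycle-class formalism \cite{Blo} to identify it with the horizontal lift of $\beta$ or $\iota_*\beta$, and then conclude via an appropriate analogue of Lemma \ref{NLlem}.

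For case (3), stable pairs on $\S_B/B$: given a stable pair $(F,s)$ on $\S_A/A$ with $c_1(F)=\beta$, the scheme-theoretic support (equivalently, the Fitting support) of $F$ is a family of Cartier divisors $C\subset\S_A$ flat over $A$, because $\S_A/A$ is smooth of relative dimension $2$ and $F$ is pure of relative dimension $1$. Its cycle class in $H^2_{dR}(\S_A/A)$ is then $\beta_A$ by \cite[Proposition 5.6]{Blo}, so $[\beta_A]^{0,2}$ vanishes and the closing argument of Proposition \ref{stablemapiso} together with Lemma \ref{NLlem} applies verbatim.

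For the threefold cases (2) and (4), I would use the $\PP^1$-bundle structure $q\colon\XXb_B\to\S_B$ together with the canonical isomorphisms $H^1(T_{\XXb_B/B})\cong H^1(T_{\S_B/B})$ and $H^{i+1,j+1}(\XXb_B/B)\cong H^{i,j}(\S_B/B)$ for $i+j=2$, which intertwine $\cup\iota_*\beta$ with $\cup\beta$. These identifications immediately give an analogue of Lemma \ref{NLlem}: the Noether-Lefschetz locus in $B$ cut out by the $H^{1,3}$-projection of the horizontal lift of $\iota_*\beta$ is scheme-theoretically $\{0\}$. For (2) one assigns to a stable map $h\colon C\to\XXb_A$ the cycle class $h_*[C]\in H^4_{dR}(\XXb_A/A)$; for (4) one assigns to $(F,s)$ the class $ch_2(F)$, i.e.\ the cycle class of its $1$-dimensional scheme-theoretic support. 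In each case Bloch's formalism identifies the class with the horizontal lift $(\iota_*\beta)_A$, whose $H^{1,3}$-projection must vanish, forcing $A\to B$ to factor through $0$.

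The main obstacle I expect is checking, for cases (3) and (4), that the ``horizontal cycle class'' conclusion of \cite[Proposition 5.6]{Blo} applies cleanly to the Fitting support of a stable pair — it is not a priori presented as the zero locus of a single section in a family — and, in the threefold setting, that the $\PP^1$-bundle identifications are compatible with Bloch's Gauss-Manin and Hodge filtration structures on the relative de Rham cohomology. Both points should follow from the functoriality of Bloch's construction and the fact that $q\colon\XXb_B\to\S_B$ is smooth proper with simply connected fibres, but they need to be verified carefully rather than asserted.
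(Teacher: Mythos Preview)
Your proposal is correct and follows essentially the same route as the paper: case (1) is Proposition \ref{stablemapiso}, case (3) replaces $h_*[C]$ by $c_1(F)$, and the threefold cases (2) and (4) are handled by passing to $H^4_{dR}(\XXb_B/B)/F^2H^4_{dR}(\XXb_B/B)$ and using the $\PP^1$-bundle identifications to transport Lemma \ref{NLlem}. The paper's proof is a three-sentence sketch saying exactly this; your version simply unpacks it further and is honest about the technical verifications (Bloch's formalism for the Fitting support, compatibility of the $\PP^1$-bundle isomorphisms with the Hodge filtration) that the paper leaves implicit.
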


\begin{proof}
The first is Proposition \ref{stablemapiso}. The same proof, with $h_*[C]$ replaced by $c_1(F)$, gives (3).

For (2) and (4), the proof is the same once we replace $h_*[C]$ by $ch_2(F)$ and $H^2_{dR}(\S_B/B)/
F^1H^2_{dR}(\S_B/B)$ by $H^4_{dR}(\XXb_B/B)/
F^2H^4_{dR}(\XXb_B/B)$.
\end{proof}

\subsection{Stable maps} \label{GWsemireg}

In this section we abbreviate the notation $\Mb_{\!g,n}(S,\beta)$ for the moduli space of stable maps with $n$ marked points to $\Mb$. 
When $n=0$ and $f\colon C\into S$ is an embedded curve with normal bundle $N_C$, the natural deformation-obstruction theory for stable maps is
$$
E\udot:=R\Gamma(N_C)^\vee
$$
at the point $f\in\Mb$. This naturally extends to a 2-term complex over $\Mb$ with a morphism to the truncated cotangent complex
$\LL_{\Mb}:=\tau^{\ge-1}L_{\Mb}\udot$ of $\Mb$. The semi-regularity map $h^1((E\udot)^\vee)=H^1(N_C)\to H^2(\O_S)$ of \eqref{semireg} is Serre dual to the composition
\beq{semireg2}
H^0(K_S)\To H^0(f^*\Omega_S\otimes f^*\Omega_S)\To H^0(N_C^*\otimes\omega_C),
\eeq
where all of the maps are the obvious ones.

For a general stable map $f\colon(C,x_1,\ldots,x_n)\to S$ we replace $N_C^*$ by the complex\footnote{$f^*\Omega_S$ is placed in degree $0$. When $C$ is embedded and $n=0$ the arrow is surjective so the complex is indeed quasi-isomorphic to its kernel $N_C^*$.} $\{f^*\Omega_S\to
\Omega_C(x_1+\ldots+x_n)\}$. More globally, denote the universal curve by
\beq{diag}
\xymatrix@R=15pt{
\mathcal C \ar[r]^{f} \ar[d]_{\pi}  & S \\
\Mb.}
\eeq
Then the perfect obstruction theory of Gromov-Witten theory is \cite{Beh, BF1}
\beq{Edot}
E\udot:= \left( R\pi_{*} R\hom\big(\big\{f^{*}\Omega_S \to \Omega_{\mathcal C/\Mb}^{\mathrm{log}}\big\},\O_{\mathcal C}\big) \right)^{\vee}\To\LL_{\Mb},
\eeq
where 
\beq{logOm}
\Omega_{\mathcal C/\Mb}^{\log}:= \Omega_{\mathcal C/\Mb}(x_{1} + \cdots + x_{n})
\eeq
and $x_{1}, \ldots, x_{n}$ are the sections of the universal curve defining the marked points. Since marked points are always \emph{smooth} points of the curve, \eqref{logOm} is the sheaf of logarithmic one forms.  Letting $ \omega_{\mathcal C/\Mb}$ be the relative dualising line bundle, consider the composition of the following standard maps
\beq{cxs}
\xymatrix@C=4pt@R=18pt{
f^*K_S \ar[rr] && f^*\Omega_S\otimes f^*\Omega_S \ar[d] \ar@{-->}[dr] \\
&& \big\{f^*\Omega_S \ar[r] & \Omega_{\mathcal C/\Mb}\big\}\otimes\Omega_{\mathcal C/\Mb} \ar[rr] && \big\{f^*\Omega_S \to \Omega_{\mathcal C/\Mb}^{\log}\big\}
\otimes\omega_{\mathcal C/\Mb}.}
\eeq
The composition $f^*K_S\to\Omega_{\mathcal C/\Mb}\otimes\Omega_{\mathcal C/\Mb}$ of the first arrow and the dotted arrow is zero since it factors through $\Lambda^2\Omega_{\mathcal C/\Mb}=0$. Therefore \eqref{cxs} defines a map of complexes
$$
f^*K_S\To\big\{f^*\Omega_S\to\Omega_{\mathcal C/\Mb}^{\log}\big\}\otimes\omega_{\mathcal C/\Mb}.
$$
Composing with $H^0(K_S)\to H^0(f^*K_S)$ and applying $R\pi_*$ we get the (dual) semi-regularity map
\beq{reg}
H^0(K_S)\otimes\O_{\Mb}\To E\udot[-1],
\eeq
which reduces to \eqref{semireg2} when $f$ is an embedding and there are no marked points. Dualising and taking $h^1$ gives
\beq{surjec}
\ob\To H^2(\O_S)\otimes\O_{\Mb},
\eeq
where $\ob:=h^1((E\udot)^\vee)$ is the obstruction sheaf on $\Mb$. We will see that this is a surjection in the proof of Theorem \ref{reducedGW}.

We define the \emph{reduced obstruction theory} of $\Mb$ to be the cone on the map \eqref{reg}:
$$
E_{red}\udot:=\mathrm{Cone}\big(H^0(K_S)\otimes\O_{\Mb}[1]\to E\udot\big).
$$
Its name is justified by the next result.

\begin{theorem} \label{reducedGW}
Suppose that $\beta\in H^2(S,\Z)$ is a $(1,1)$ class such that $H^1(T_S)
\Rt{\cup\beta}H^2(\O_S)$ is surjective. Then there is a perfect obstruction theory\footnote{While $E\udot_{red}$ is canonical, we are currently unable to prove that the map $E_{red}\udot\to\LL_{\Mb}$ is independent of the choice \eqref{choice} made to define the algebraic twistor family $\S_B$. But the induced virtual cycle is 
canonical, since it only depends on the K-theory class of $E\udot_{red}$ \cite{Pid, Sie}.}
$E_{red}\udot\to\LL_{\Mb}$ for $\Mb_{\!g,n}(S,\beta)$ with virtual dimension
\beq{redvd}
v:=g-1 + \int_{\beta} c_{1}(S) + n + h^{0,2}(S).
\eeq
\end{theorem}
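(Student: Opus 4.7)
The plan is to construct $E\udot_{red}$ and verify that it is a perfect obstruction theory by pulling a natural POT back from the relative moduli space over the algebraic twistor family $\S_B\to B$, and then matching the result with the cone of the semi-regularity map defining $E\udot_{red}$.

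By Proposition \ref{stablemapiso}, $\Mb:=\Mb_{\!g,n}(S,\beta)$ is isomorphic as a stack to the relative moduli space $\Mb_{\!g,n}(\S_B/B,\beta_B)$. The latter carries the standard relative Gromov--Witten POT $E\udot_{rel}\to\LL_{\Mb/B}$, defined by the analogue of \eqref{Edot} with $\Omega_S$ replaced by $\Omega_{\S_B/B}$, of virtual dimension (relative to $B$) equal to $g-1+\int_\beta c_1(S)+n$. Since the structure map $p\colon\Mb\to B$ factors through the closed point $0\in B$, the distinguished triangle $p^*\LL_B\to\LL_{\Mb}\to\LL_{\Mb/B}$ allows me to promote this to an \emph{absolute} POT $E\udot_{abs}\to\LL_{\Mb}$: the composition $E\udot_{rel}\to\LL_{\Mb/B}\to p^*\LL_B[1]$ is the obstruction to absoluteness, and its fibre fits in a triangle $p^*\LL_B\to E\udot_{abs}\to E\udot_{rel}$. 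After truncation to $[-1,0]$, only $\Omega_B|_0=V^*$ (locally free of rank $h^{0,2}(S)$) contributes, so $E\udot_{abs}$ is a perfect two-term POT with virtual dimension $g-1+\int_\beta c_1(S)+n+h^{0,2}(S)=v$.

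The crucial step is identifying $E\udot_{abs}$ with $E\udot_{red}$ compatibly with their maps to $\LL_{\Mb}$. Because the universal $f\colon\mathcal C\to\S_B$ factors through the central fibre $S$, one has $f^*\Omega_{\S_B/B}\simeq f^*\Omega_S$, so $E\udot_{rel}\simeq E\udot$ as underlying complexes (with distinct POT maps). The connecting morphism $E\udot\simeq E\udot_{rel}\to p^*\LL_B[1]$ is then induced by cup product with the Kodaira--Spencer class of $\S_B\to B$; by the very construction \eqref{nondegen} of $\S_B$, this equals the isomorphism $V\to H^1(T_S)\xrightarrow{\cup\beta}H^2(\O_S)$. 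Via Serre duality this identifies $V^*\cong H^0(K_S)$ and reinterprets the connecting map as the dual semi-regularity map \eqref{reg}. Taking fibres yields $E\udot_{abs}\simeq E\udot_{red}$ and transports the POT structure onto $E\udot_{red}$, proving the theorem.

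The principal technical obstacle is this last identification: verifying that the abstract connecting map in the relative-to-absolute triangle \emph{coincides}, with correct signs and shifts in the derived category, with the geometric map \eqref{reg} arising from the composition \eqref{cxs}. This amounts to a naturality check relating the Atiyah class of $\mathcal C\to\Mb$ (used to build $E\udot$ in \eqref{Edot}) paired with the Kodaira--Spencer class of $\S_B\to B$, to the pairing $T_S\otimes\Omega_S\to\O_S$ implicit in \eqref{cxs}. Granted this, surjectivity of the obstruction map \eqref{surjec} is automatic from the isomorphism \eqref{nondegen}, so $E\udot_{red}$ indeed has amplitude $[-1,0]$ and inherits the POT map from $E\udot_{abs}$.
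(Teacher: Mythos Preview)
Your proposal is correct and follows essentially the same route as the paper: build an absolute obstruction theory from the relative one over $\S_B/B$ via the triangle $\Omega_B\to\LL_{\Mb}\to\LL_{\Mb/B}$, identify $E\udot_{rel}\simeq E\udot$ using Proposition~\ref{stablemapiso}, and then match the connecting map $E\udot[-1]\to\Omega_B$ with the dual semi-regularity map \eqref{reg} via \eqref{nondegen}. The ``principal technical obstacle'' you flag is precisely what the paper isolates as Lemma~\ref{KSclaim}: that the connecting map coming from the Kodaira--Spencer class of $\Mb/B$ agrees with the one induced by the Kodaira--Spencer class of $\S_B/B$, which requires unwinding the construction of the relative obstruction theory over the stack of prestable curves.
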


\begin{proof}
By \cite{Beh, BF1} the relative obstruction theory for $\Mb_{\!g,n}(\S_B/B,\beta_B)$ is
\beq{relob}
E_{rel}\udot:= \left( R\pi_{*} R\hom\big(\big\{f^{*}\Omega_{\S_B/B} \to \Omega_{\mathcal C/\Mb}^{\mathrm{log}}\big\},\O_{\mathcal C}\big) \right)^{\vee}\To
\LL_{\Mb_{\!g,n}(\S_B/B,\beta_B)/B}\,,
\eeq
where the maps are those of the universal diagram
$$
\xymatrix@C=10pt@R=18pt{
\mathcal C \ar[r]^{f} \ar[d]_{\pi}  & \S_B \\
\Mb_{\!g,n}(\S_B/B,\beta_B).}
$$
By Proposition \ref{stablemapiso}, $\Mb_{\!g,n}(\S_B/B,\beta_B)\cong\Mb$ and the above diagram factors through the diagram \eqref{diag} for $S$ instead of $\S_B$. Therefore in fact $E\udot_{rel}$ is just $E\udot$ \eqref{Edot}, giving a perfect relative obstruction theory
\beq{Edot2}
E\udot\To\LL_{\Mb/B}.
\eeq
Even though $B$ is not smooth, its simple form -- and the fact that $\Mb$ is supported over the reduced point $0\in B$ -- means that there is an exact triangle
\beq{LL}
\Omega_B|_{\Mb}\To\LL_{\Mb}\To\LL_{\Mb/B}\To\Omega_B|_{\Mb}[1],
\eeq
where again we have suppressed the pullback map from $B$. Explicitly, embed $\Mb$ into an ambient stack $\A\to B$ which is smooth over $B$, and let $I$ denote the ideal of $\Mb\subset\A$. Then the triangle comes from the horizontal exact sequence of vertical complexes
$$
\spreaddiagramrows{-3mm}
\xymatrix{
& I/I^2 \ar[d]\ar@{=}[r] & I/I^2 \ar[d] \\ 
0\To\Omega_B|_{\Mb} \ar[r] & \Omega_\A|_{\Mb} \ar[r] & \Omega_{\A/B}|_{\Mb}\To0,\hspace{-15mm}}
$$
where the exactness of the bottom row follows from the smoothness of $\A\to B$.
From \eqref{Edot2} and \eqref{LL} we get a map $E\udot[-1]\to\Omega_B|_{\Mb}$ whose cone we define to be $F\udot$:
\beq{obthys}
\spreaddiagramrows{-.7pc}
\spreaddiagramcolumns{-.3pc}
\xymatrix{
F\udot \ar[d]\ar[r] & E\udot \ar[d]\ar[r] & \Omega_B|_{\Mb}[1] \ar@{=}[d] \\
\LL_{\Mb} \ar[r] & 
\LL_{\Mb/B} \ar[r] & \Omega_B|_{\Mb}[1].\!\!}
\eeq
To show that $F\udot=\mathrm{Cone}\big(E\udot[-1]\to\Omega_B|_{\Mb}\big)$ is quasi-isomorphic to $E_{red}\udot=\mathrm{Cone}\big(H^0(K_S)\otimes\O_{\Mb}[1]\to E\udot\big)$ it is sufficient to show that the composition
$$
H^0(K_S)\otimes\O_{\Mb}\To E\udot[-1]\To\Omega_B|_{\Mb}
$$
is an isomorphism.\footnote{The dual of this isomorphism says that moving in any direction in $B$ our curve is obstructed at first order since its cohomology class acquires a nonzero $(0,2)$ part $[\beta]^{0,2}$. It also proves the surjectivity claimed in \eqref{surjec}.} Applying $h^0$ we have
\beq{premess}
H^0(K_S)\otimes\O_{\Mb}\To R^0\pi_*\big(\big\{f^*\Omega_S\to\Omega_{\mathcal C/\Mb}^{\log}\big\}\otimes\omega_{\mathcal C/\Mb}\big)
\To\Omega_B|_{\Mb}.
\eeq
Recalling the definition of the dual semi-regularity map \eqref{reg} via the diagram \eqref{cxs} we see this factors through
\beq{mess}
H^0(K_S)\otimes\O_{\Mb}\To R^0\pi_*(f^*\Omega_S\otimes\omega_{\mathcal C/\Mb})
\To\Omega_B|_{\Mb},
\eeq
where the first arrow is the obvious map pulling back 2-forms to $C$. By Lemma \ref{KSclaim} below the second arrow -- induced by the Kodaira-Spencer map of $\Mb/B$ in the bottom row of \eqref{obthys} --  is the same as the one 
\beq{common}
R^0\pi_*(f^*\Omega_S\otimes\omega_{\mathcal C/\Mb})\To
R^1\pi_*(\Omega_B|_{\Mb}\otimes\omega_{\mathcal C/\Mb})\cong\Omega_B|_{\Mb}
\eeq
induced by the Kodaira-Spencer map $\Omega_S\to
\Omega_B|_{\Mb}[1]$ \eqref{cotangent} of $\S_B/B$. The composition is therefore the dual of \eqref{nondegen} pulled back to $\Mb$, which is indeed an isomorphism. \medskip

By the long exact sequences in cohomology of diagram \eqref{obthys}, and the fact that $E\udot$ is a perfect relative obstruction theory, we see that $F\udot$ has cohomology only in degrees $-1,\,0$ and that $F\udot\to\LL_{\Mb}$
is an isomorphism on $h^0$ and a surjection on $h^{-1}$. And since $E\udot$ is quasi-isomorphic to a 2-term complex of vector bundles $\{E^{-1}\to E^0\}$, so is $F\udot$ locally:
$$
F\udot\simeq\{E^{-1}\to E^0\oplus\Omega_B|_{\Mb}\}.
$$
(We work locally to obtain the map $E^{-1}\to\Omega_B$ from the fact that $E^{-1}$ is free, thus defining a projective module.) Working globally one can do the same thing by resolving by sufficiently negative locally free sheaves, etc.
Thus $F\udot\to\LL_{\Mb}$ is an absolute obstruction theory for $\Mb$.
\end{proof}

\begin{lemma} \label{KSclaim}
The second arrow in \eqref{mess} is the one induced by the Kodaira-Spencer map $\Omega_S\to\Omega_B|_{\Mb}[1]$ \eqref{cotangent} of $\S_B/B$.
\end{lemma}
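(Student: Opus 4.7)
The plan is to unwind the construction of the Kodaira-Spencer triangle \eqref{LL} of $\Mb/B$ in terms of the Kodaira-Spencer triangle \eqref{cotangent} of $\S_B/B$, using the fact that the former arises functorially from the latter via the obstruction-theory construction. Since by Proposition \ref{stablemapiso} the universal stable map $f\colon\mathcal C\to\S_B$ factors through $j\colon S\hookrightarrow\S_B$, pulling back \eqref{cotangent} along $f$ gives the short exact sequence
$$
0\To\pi^*p^*\Omega_B\To f^*\Omega_{\S_B}\big|_S\To f^*\Omega_S\To 0
$$
on the universal curve $\mathcal C$, where $p\colon\Mb\to B$ is the structure map; its connecting morphism $f^*\Omega_S\to\pi^*p^*\Omega_B[1]$ is the pullback of the Kodaira-Spencer class of $\S_B/B$.

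Combining with the canonical arrow to $\Omega^{\log}_{\mathcal C/\Mb}$ that defines \eqref{Edot} yields a distinguished triangle of two-term complexes on $\mathcal C$
$$
\pi^*p^*\Omega_B\To\big\{f^*\Omega_{\S_B}|_S\to\Omega^{\log}_{\mathcal C/\Mb}\big\}\To\big\{f^*\Omega_S\to\Omega^{\log}_{\mathcal C/\Mb}\big\}\To\pi^*p^*\Omega_B[1].
$$
Apply the functor $R\pi_*R\hom(-,\O_{\mathcal C})^\vee$ of \eqref{Edot}. By Serre duality along the curve fibration $\pi$, this is equivalent on derived pushforwards to $R\pi_*(-\otimes\omega_{\mathcal C/\Mb})$ up to the appropriate shift. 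The third term gives $E\udot$, and the first produces a complex whose $h^1$, via the projection formula and $R^1\pi_*\omega_{\mathcal C/\Mb}=\O_\Mb$, is $p^*\Omega_B$. The connecting morphism of the resulting triangle is, by construction, induced on derived pushforwards by the pulled-back Kodaira-Spencer class of $\S_B/B$; restricted to $h^0$, it factors through $R^0\pi_*(f^*\Omega_S\otimes\omega_{\mathcal C/\Mb})$ and equals the second arrow of \eqref{mess}.

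It remains to identify this connecting morphism with the composition $E\udot\to\LL_{\Mb/B}\to\Omega_B[1]$ from \eqref{obthys}. This follows from the functoriality of the Behrend-Fantechi construction \cite{BF1}: the middle complex $\big\{f^*\Omega_{\S_B}|_S\to\Omega^{\log}_{\mathcal C/\Mb}\big\}$ yields the corresponding absolute perfect obstruction theory mapping to $\LL_\Mb$, and the induced morphism of triangles matches our triangle above with the absolute/relative cotangent triangle \eqref{LL}. The main obstacle is checking this naturality, which reduces to compatibility of the Atiyah-class construction underlying \eqref{Edot} with pullback of the cotangent extension of $\S_B/B$; as the latter is a short exact sequence of honest sheaves, the verification is formal.
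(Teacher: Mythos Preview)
Your overall strategy is sound: you correctly construct the map $R^0\pi_*(f^*\Omega_S\otimes\omega_{\mathcal C/\Mb})\to R^1\pi_*(\Omega_B\otimes\omega_{\mathcal C/\Mb})$ induced by the Kodaira--Spencer extension of $\S_B/B$, and you correctly identify that the content of the lemma is matching this with the map coming from the composition $E\udot\to\LL_{\Mb/B}\to\Omega_B[1]$ in \eqref{obthys}. But the last paragraph is where the actual work lies, and you have not done it.

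Two concrete problems. First, you assert that the middle complex $\{f^*\Omega_{\S_B}|_S\to\Omega^{\log}_{\mathcal C/\Mb}\}$ ``yields the corresponding absolute perfect obstruction theory mapping to $\LL_\Mb$''. But $\S_B$ is not smooth over $\C$, so the standard Behrend--Fantechi construction does not produce such a map from this complex; you would need the full cotangent complex $\LL_{\S_B}$ rather than $\Omega_{\S_B}$, and these differ. Second, even granting some such map, identifying it with $F\udot$ of \eqref{obthys} means matching two cones in a derived category, and cones are not functorial. Saying the compatibility ``is formal'' because \eqref{cotangent} is a short exact sequence of honest sheaves does not address this: the map $E\udot\to\LL_{\Mb/B}$ itself is built via the Atiyah class of the universal map, and you have not explained why its composition with the Kodaira--Spencer map of $\Mb/B$ agrees with your pushed-forward connecting homomorphism.

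The paper's proof avoids both issues by passing to the obstruction theory \emph{relative to the stack $\MMb_{g,n}$ of prestable curves}. There the relative obstruction theory is literally $(R\pi_*f^*T_{\S_B/B})^\vee$, so the Kodaira--Spencer triangle of $\S_B/B$ appears directly in the construction (as the last column of a diagram of exact triangles built from $\LL^\vee_{\S_B/B}\to\LL^\vee_{\S_B}\to V$). Commutativity of that diagram then gives the identification without any appeal to non-functorial cones or to an absolute obstruction theory for a non-smooth target. Your approach could likely be completed, but it would require essentially reproducing this $\MMb$-relative analysis; as written, the final step is a gap rather than a formality.
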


\begin{proof}
Let $\MMb:=\MMb_{g,n}$ denote the stack of prestable curves with $n$ marked points, with universal curve $\mathcal C\to\MMb$. 

In passing from \eqref{premess} to \eqref{mess} we have passed from the obstruction theory $E\udot\to\LL_{\Mb/B}$ (composed with $\Mb$'s Kodaira-Spencer map $\LL_{\Mb/B}\to\Omega_B|_{\Mb}[1]$) to the relative obstruction theory $\mathcal E\udot\to\LL_{\Mb/\MMb\times B}$ (composed with $\Mb/\MMb$'s Kodaira-Spencer map $\LL_{\Mb/\MMb\times B}\to\Omega_B[1]$). In other words the second arrow in \eqref{mess} is the composition
$$
h^{-1}(\mathcal E\udot)\To h^{-1}(\LL_{\Mb/\MMb\times B})\To\Omega_B|_{\Mb}.
$$
We recall the relative obstruction theory of $\Mb/\MMb\times B$ \cite{Beh, BF1}, using the maps
\beq{CS}
\xymatrix@R=16pt{
\mathcal C\times_{\MMb}\Mb \ar[r]^(.6)f\ar[d]^\pi & \S_B \\
\Mb.\!}
\eeq
In fact it is simpler to describe the \emph{dual} of the perfect obstruction theory $\mathcal E\udot\to\LL_{\Mb/\MMb\times B}$; it is the composition $\LL_{\Mb/\MMb\times B}^\vee\to R\pi_*(f^*T_{\S_B/B})$ of the top row of the diagram of vertical exact triangles
\beq{bigdg}
\spreaddiagramcolumns{-.6pc}
\spreaddiagramrows{-.8pc}
\xymatrix{
\LL^\vee_{\Mb/\MMb\times B} \ar[r]\ar[d] & R\pi_*\!\Big(\!\pi^*
\LL^\vee_{\Mb/\MMb\times B}\oplus\LL^\vee_{\mathcal C/\MMb}\!\Big)\ar[d]\ar@{=}@<.5ex>[r]
& R\pi_*\LL^\vee_{\mathcal C\times_{\MMb}\Mb/\MMb\times B} \ar[r]^(.53){(f^*)^\vee}\ar[d] & R\pi_*f^*\LL^\vee_{\S_B/B} \ar[d] \\
\LL^\vee_{\Mb/\MMb} \ar[r]\ar[d] & R\pi_*\!\Big(\!\pi^*
\LL^\vee_{\Mb/\MMb}\oplus\LL^\vee_{\mathcal C/\MMb}\!\Big) \ar@{=}@<.5ex>[r]\ar[d]
& R\pi_*\LL^\vee_{\mathcal C\times_{\MMb}\Mb/\MMb} \ar[r]^(.53){(f^*)^\vee}\ar[d] &
R\pi_*f^*\LL^\vee_{\S_B} \ar[d] \\
V \ar[r] & R\pi_*(\pi^*V) \ar@{=}[r] & R\pi_*(\pi^*V) \ar@{=}[r] & R\pi_*(\pi^*V).}
\eeq
Here $\LL^\vee_{\S_B/B}$ is just the tangent bundle $T_{\S_B/B}$ since $\S_B\to B$ is smooth, $V$ is the splitting of \eqref{choice}, and we recall that $\Omega_B|_{\Mb}\cong V^*$.

Use the natural exact triangle of cohomologies
$$
V\To R\pi_*(\pi^*V)\To R^1\pi_*(\pi^*V)[-1]
$$
to remove $R^1\pi_*(\pi^*V)[-1]$ from the lower and the middle terms in the right hand column. In the latter case we call the resulting cone $(\mathcal F\udot)^\vee$. Now compose \eqref{bigdg} horizontally and dualise, to give a commutative diagram involving just the first and last columns:
$$
\spreaddiagramrows{-.6pc}
\xymatrix{
\LL_{\Mb/\MMb\times B} &  \mathcal E\udot \ar[l] \\
\LL_{\Mb/\MMb} \ar[u] & \mathcal F\udot \ar[u]\ar[l] \\
V^* \ar[u] & V^*.\!\! \ar[u]\ar[l]}
$$
Rearranging gives
$$
\spreaddiagramrows{-.6pc}
\spreaddiagramcolumns{-.3pc}
\xymatrix{
\mathcal F\udot \ar[d]\ar[r] & \mathcal E\udot \ar[d]\ar[r] & \Omega_B|_{\Mb}[1] \ar@{=}[d] \\
\LL_{\Mb/\MMb} \ar[r] & \LL_{\Mb/\MMb\times B} \ar[r] & \Omega_B|_{\Mb}[1],\!\!}
$$
just as in \eqref{obthys}, but for $\LL_{\Mb/\MMb}$ instead of $\LL_{\Mb}$.
But \eqref{bigdg} commutes, so the above diagram does too. The top right hand map is the one we're after (after taking $h^{-1}$), and from the last column of \eqref{bigdg} we see that it is indeed the one induced by the Kodaira-Spencer
map of $\S_B$ over $B$.
\end{proof}

If we work on $\Xb=\PP(K_S\oplus\O_S)$ instead of $S$ we get a similar semi-regularity map by replacing $K_S$ and $\Omega_S$ in \eqref{cxs} by
$\Omega^2_{\Xb}$ and $\Omega_{\Xb}$ respectively. Thus we remove $H^{1,3}(\Xb)\cong H^{0,2}(S)$ from the obstruction sheaf, and by very similar working obtain the following.

\begin{theorem} \label{reducedGWX}
Suppose that $\beta\in H^2(S,\Z)$ is a $(1,1)$ class inducing a surjection $\cup\beta\colon H^1(T_S)\to H^2(\O_S)$. Then there is a perfect obstruction theory $E_{red}\udot\to\LL_{\Mb}$ for $\Mb_{\!g,n}(\Xb,\iota_*\beta)$ with virtual dimension $h^{1,3}(\Xb)+n=h^{0,2}(S)+n$. \hfill$\square$
\end{theorem}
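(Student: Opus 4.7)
The plan is to adapt the proof of Theorem \ref{reducedGW} almost verbatim, swapping $S$ for $\Xb$ throughout and replacing $K_S = \Omega^2_S$ and $\Omega_S$ in diagram \eqref{cxs} by $\Omega^2_{\Xb}$ and $\Omega_{\Xb}$. First I would construct the semi-regularity map for stable maps to $\Xb$: the composition
$$f^{*}\Omega^2_{\Xb} \to f^{*}\Omega_{\Xb} \otimes f^{*}\Omega_{\Xb} \to \{f^{*}\Omega_{\Xb} \to \Omega^{\log}_{\mathcal C/\Mb}\} \otimes \omega_{\mathcal C/\Mb}$$
still factors through $\Lambda^2 \Omega_{\mathcal C/\Mb} = 0$ since $\mathcal C \to \Mb$ has relative dimension one, and applying $R\pi_{*}$ yields a morphism $H^{0}(\Omega^2_{\Xb}) \otimes \O_{\Mb} \to E\udot[-1]$. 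I define $E\udot_{red}$ as the shifted cone, which effectively removes the $H^{0}(\Omega^2_{\Xb}) \cong H^{2,0}(S)$ summand from the obstruction sheaf.

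Next I would check $E\udot_{red} \to \LL_{\Mb}$ is a perfect obstruction theory using exactly the structure of the proof of Theorem \ref{reducedGW}. Proposition \ref{stablepairiso}(2) identifies $\Mb_{\!g,n}(\Xb, \iota_{*}\beta)$ with the relative moduli space $\Mb_{\!g,n}(\XXb_B/B, \iota_{*}\beta_B)$, so the standard relative Behrend-Fantechi obstruction theory of the latter agrees with the absolute $E\udot$ of the former. The K\"ahler differentials triangle $\Omega_B \to \LL_{\Mb} \to \LL_{\Mb/B} \to \Omega_B[1]$ then produces a map $E\udot[-1] \to \Omega_B$, whose cone $F\udot$ is quasi-isomorphic to $E\udot_{red}$ precisely when the composition
$$H^{0}(\Omega^2_{\Xb}) \otimes \O_{\Mb} \to E\udot[-1] \to \Omega_B$$
is an isomorphism. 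Via the canonical identifications $H^{0}(\Omega^2_{\Xb}) \cong H^{0}(K_S)$ and $H^{1}(T_{\Xb}) \cong H^{1}(T_S)$ intertwining $\cup\,\iota_{*}\beta$ with $\cup\,\beta$, this reduces to the isomorphism $H^{0}(K_S) \cong V^{*} = \Omega_B$ already established for $S$; the Kodaira-Spencer compatibility (analogue of Lemma \ref{KSclaim}) follows because $\XXb_B/B$ is constructed from $\S_B/B$ using precisely this identification.

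The virtual dimension then evaluates to $\int_{\iota_{*}\beta}c_{1}(\Xb) + n$ for the unreduced theory, plus $h^{0,2}(S) = h^{1,3}(\Xb)$ added by the reduction. Using adjunction $K_{\Xb}|_{S} = K_S \otimes N^{-1}_{S/\Xb} = K_S \otimes K_S^{-1} = \O_S$, so $\iota^{*}c_{1}(\Xb) = 0$, the unreduced dimension is simply $n$ and the reduced one is $n + h^{0,2}(S)$ as claimed. The main obstacle I anticipate is bookkeeping: carefully tracking the various Hodge-theoretic identifications between $S$ and its $\PP^1$-bundle projective completion $\Xb$, and verifying that the semi-regularity map constructed on $\Xb$ lands precisely in the $H^{1,3}(\Xb)$ summand of the obstruction sheaf that corresponds under the surjectivity hypothesis to $H^{0,2}(S)$.
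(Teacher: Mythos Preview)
Your proposal is correct and matches the paper's approach exactly: the paper itself gives no proof beyond the sentence ``If we work on $\Xb$ instead of $S$ we get a similar semi-regularity map by replacing $K_S$ and $\Omega_S$ in \eqref{cxs} by $\Omega^2_{\Xb}$ and $\Omega_{\Xb}$ respectively\ldots\ by very similar working we obtain the following,'' which is precisely what you have spelled out. One small citation slip: Proposition~\ref{stablepairiso}(2) is stated for the open threefold $X=K_S$, not for $\Xb$; you need the $\Xb$ analogue, which is not listed there but follows by the identical Noether--Lefschetz argument (using $H^4_{dR}(\XXb_B/B)/F^2$ as in the proof of parts (2) and (4)).
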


\subsection{Stable pairs}
We sketch how the arguments of the last Section are modified to prove the same result for stable pairs. In Appendix \ref{Dmitri} we use a different method to get a better result using only Condition \eqref{condition} in place of \eqref{condition2}.

As before we let $\Xb = \mathbb{P}(K_S \oplus \O_S)$ be the projective completion of the canonical bundle of $S$. Let $0 \neq \beta \in H_{2}(S,\Z)$ be of type $(1,1)$ and denote the inclusion of the zero-section by $\iota \colon S\into\Xb$.

Let $P:= P_{n}(\Xb, \iota_{*}\beta)$ be the moduli space of stable pairs on $\Xb$ with universal object $\II\udot = \{
\O_{\Xb\times P} \To \mathbb{F}\}$ over $\Xb\times P$. Using the projections
\beq{projns}
\spreaddiagramrows{-3mm}
\spreaddiagramcolumns{-5mm}
\xymatrix{
& \Xb\times P \ar[dr]^(.55){\pi_P}\ar[dl]_(.55){\pi_{\Xb}} \\
\Xb && P}
\eeq
and the relative dualising sheaf $\omega_{\pi_P}=\pi_{\Xb}^*\omega_{\Xb}$, the perfect obstruction theory for stable pair theory of $\Xb$ is \cite[Theorem~2.14]{PT1}
\begin{equation*}
E\udot:=R \pi_{P*} (R\hom(\II\udot,\II\udot)_{0} \otimes \omega_{\pi_P})[2] \To\LL_P.
\end{equation*}
Here $(\ \cdot\ )_{0}$ denotes trace-free part, and the virtual dimension is $\int_{\iota_{*} \beta} c_{1}(\Xb)=0$. This gives an obstruction sheaf\footnote[1]{We denote the $i$th cohomology sheaf of $R \pi_{P*} R\hom$ by $\ext_{\pi_P}^{i}$.}
\begin{equation*}
\mathrm{Ob}:= h^{1}((E\udot)^\vee) = \ext_{\pi_P}^{2}(\II\udot, \II\udot)_0.
\end{equation*}
Cupping with the Atiyah class $A(\II\udot)\in\Ext^1(\II\udot,\II\udot\otimes\LL_{\Xb\times P})$ and taking trace defines a semi-regularity map $(E\udot)^\vee\to H^{1,3}(\Xb)\otimes\O_P[-1]$ by the composition
\begin{align}
\ext_{\pi_P}^{2}(\II\udot, \II\udot)_{0} \subset\ &
\ext_{\pi_P}^{2}(\II\udot, \II\udot)
\spreaddiagramcolumns{7pt} \xymatrix{\ar[r]^{\cup A(\II\udot)} &}
\ext_{\pi_P}^{3}(\II\udot, \II\udot \otimes\LL_{\Xb\times P}) \To \nonumber \\
& \ext_{\pi_P}^{3}(\II\udot, \II\udot \otimes \pi_{\Xb}^{*} \Omega_{\Xb})
\Rt{\tr} R^3\pi_{P*} \pi_{\Xb}^{*} \Omega_{\Xb} \cong
H^{1,3}(\Xb)\otimes\O_P. \label{pairsreg}
\end{align}
We will see in the proof of Theorem \ref{reducedPT} that \eqref{pairsreg} is a surjection when $(S,\beta)$ satisfy Condition \eqref{condition2}. This also follows the obvious generalisation of \cite[Proposition~11]{MPT} to all surfaces.

Dualising the composition
$$
(E\udot)^\vee\To h^1((E\udot)^\vee)[-1]\To H^{1,3}(\Xb)\otimes\O_P[-1]
$$
gives a map
$$
H^{2,0}(\Xb)\otimes\O_P[1]\To E\udot;
$$
let $E\udot_{red}$ be its cone.

\begin{theorem} \label{reducedPT}
Assume that $\cup \beta\colon H^1(T_S)\to H^2(\O_S)$ is surjective.
Then there exists a perfect obstruction theory $E\udot_{red}\to\LL_P$ for $P_n(\Xb,\iota_*\beta)$ of virtual dimension $h^{1,3}(\Xb)=h^{0,2}(S)$.
\end{theorem}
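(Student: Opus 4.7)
The plan is to mirror the argument of Theorem \ref{reducedGW}, replacing the moduli space of stable maps by $P = P_n(\Xb, \iota_*\beta)$ and the standard stable-maps obstruction theory by the stable-pair obstruction theory $E\udot = R\pi_{P*}(R\hom(\II\udot,\II\udot)_0\otimes\omega_{\pi_P})[2]$. By Proposition \ref{stablepairiso}(4), the natural map $j_* \colon P_n(\Xb,\iota_*\beta) \to P_n(\XXb_B/B,\iota_*\beta_B)$ is an isomorphism of stacks. Applying the relative version of the Pandharipande--Thomas obstruction theory (as in \cite{PT1}) to the family $\XXb_B/B$ gives a perfect relative obstruction theory $E\udot_{rel} \to \LL_{P/B}$, and under the identification from Proposition \ref{stablepairiso}(4) the universal sheaf and its trace-free extensions agree on the fibres, so $E\udot_{rel} \simeq E\udot$.

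Since $B$ is Artinian with $\Omega_B \simeq V^* \otimes \O_{B,0}$ pushed forward, and the moduli space embeds in a stack smooth over $B$, there is an exact triangle $\Omega_B \to \LL_P \to \LL_{P/B} \to \Omega_B[1]$ pulled back to $P$. Composing $E\udot \to \LL_{P/B}$ with the connecting map and forming cones exactly as in diagram \eqref{obthys} produces $F\udot := \mathrm{Cone}(E\udot[-1]\to\Omega_B)$ equipped with a map $F\udot\to\LL_P$. To identify $F\udot$ with $E\udot_{red} = \mathrm{Cone}(H^{2,0}(\Xb)\otimes\O_P[1]\to E\udot)$ it suffices, exactly as in Theorem \ref{reducedGW}, to show that the composition
\[
H^{2,0}(\Xb)\otimes\O_P \To E\udot[-1] \To \Omega_B
\]
is an isomorphism. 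This will simultaneously establish that the semi-regularity map \eqref{pairsreg} is surjective, and the dimension count will give virtual dimension $\int_{\iota_*\beta}c_1(\Xb) + h^{1,3}(\Xb) = h^{0,2}(S)$.

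The main obstacle is this last identification -- the analog of Lemma \ref{KSclaim} in the stable-pair setting. The composition is built from the Atiyah class of the universal stable pair applied to the Kodaira--Spencer class of $\XXb_B/B$: on $P_n(\XXb_B/B, \iota_*\beta_B)$ the Kodaira--Spencer class of the moduli map appears in $\ext^1(\II\udot,\II\udot\otimes \pi_{\XXb_B}^*\Omega_{\XXb_B/B})$ via $A(\II\udot)$, and projecting via $\Omega_{\XXb_B/B} \to \Omega_B[1]$ reproduces the arrow $E\udot[-1]\to \Omega_B$. The plan is to trace this through the ``commutative cube'' of cotangent-complex triangles of $P\to \MMb\times B$ and $\XXb_B\to B$ analogous to \eqref{bigdg}, to conclude that the composite factors as
\[
H^{2,0}(\Xb)\otimes\O_P \To H^1(T_S)^* \otimes \O_P \To V^*\otimes\O_P = \Omega_B,
\]
where the first arrow is Serre-dual to $\cup\beta\colon H^1(T_S)\to H^2(\O_S)$ (using $H^{2,0}(\Xb)\cong H^{2,0}(S) \cong H^2(\O_S)^*$ via Serre duality on $S$) and the second is dual to the splitting $V\hookrightarrow H^1(T_S)$. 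By \eqref{nondegen} this composition is an isomorphism.

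The remainder is formal: as in Theorem \ref{reducedGW}, the long exact sequence of cohomology sheaves from diagram \eqref{obthys} (for $F\udot$ in place of $E_{red}\udot$) shows $F\udot$ is concentrated in degrees $[-1,0]$, that $F\udot\to\LL_P$ is an isomorphism on $h^0$ and surjective on $h^{-1}$, and local freeness of $E^{-1}$ lets us resolve $F\udot$ by a two-term complex of locally free sheaves, yielding the required perfect absolute obstruction theory $E_{red}\udot\to\LL_P$.
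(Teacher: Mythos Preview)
Your approach is essentially the paper's: relative obstruction theory over $B$, identification via Proposition \ref{stablepairiso}, form $F\udot$ by coning, and reduce to showing $H^{2,0}(\Xb)\otimes\O_P\to E\udot[-1]\to\Omega_B$ is an isomorphism. Two points worth noting.

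First, your reference to ``$P\to\MMb\times B$'' is misplaced: the prestable-curve stack $\MMb$ plays no role in stable-pair theory. The relevant Kodaira--Spencer comparison is purely between the Atiyah class of $\II\udot$ on $\XXb_B\times_B P$ and the Kodaira--Spencer class of $\XXb_B/B$; there is no intermediate moduli stack of curves. The paper invokes the stable-pairs analogue of Lemma \ref{KSclaim} directly, citing \cite[Proposition 13]{MPT}.

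Second, and more substantively, the step you describe as ``the first arrow is Serre-dual to $\cup\beta$'' is exactly the content you must establish, and it is not a formal consequence of diagram-chasing. The paper reduces by Nakayama to a single point $(F,s)$, dualises to obtain the composition \eqref{eqn1}, and then invokes \cite[Proposition 11]{MPT} for the nontrivial identity that $\tr(A(I\udot)\cup A(I\udot)\cup\,\cdot\,)\colon H^1(T_{\Xb})\to H^{1,3}(\Xb)$ equals $\cup(-2\iota_*\beta)$. This trace-of-Atiyah-class-squared computation is the crux; your proposal states the conclusion but does not supply or cite the argument.
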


\begin{proof}
Associated to the algebraic twistor family $\S_B\to B$ we get its family of projectively completed canonical bundles $\XXb_B\to B$. By Proposition \ref{stablepairiso} the family $P:= P_n(\XXb_B/B, \iota_{*}\beta_B)\to B$ of moduli spaces of stable pairs on the fibres is isomorphic to the space $P_n(\Xb,\beta)$ of stable pairs on $\Xb$.

In \cite[Section~3]{MPT}, a relative perfect obstruction theory $E_{rel}\udot$ is constructed
\begin{equation*}
E_{rel}\udot:= R \pi_{P*} (R\hom(\II\udot,\II\udot)_{0} \otimes \omega_{P \times_{B} \XXb_B / P })[2] \To \LL_{P/B}\udot.
\end{equation*}
Here $\XXb_B\times_B P$ carries the universal object $\II\udot$ and has a projection $\pi_P$ to $P$ with relative dualising sheaf $\omega_{\pi_P}=\pi_P^*\omega_{\XXb_B/B}$. As before, this can be made into a perfect absolute obstruction theory $F\udot$ by the diagram
$$
\spreaddiagramrows{-.5pc}
\xymatrix{
F\udot \ar[d] \ar[r] & E_{rel}\udot \ar[d]\ar[r] & \Omega_{B}[1] \ar@{=}[d] \\
\LL_{P} \ar[r] & \LL_{P/B} \ar[r] & \Omega_{B}[1].\!}
$$
Since by Proposition \ref{stablepairiso} the stable pairs of $P$ all lie scheme theoretically on the central fibre $\Xb$, we see as before that in fact $E\udot_{rel}$ is just the usual complex $E\udot$ of stable pair theory on $\Xb$. But $E\udot$ has virtual dimension $0$, so $F\udot$ has virtual dimension $h^{2,0}(S)$. Therefore to prove the Theorem we are left with showing that the composition
$$
F\udot\To E\udot\To E\udot_{red}
$$
is an isomorphism. It is sufficient to show that the composition
$$
H^0(\Omega^2_{\Xb})\otimes\O_P\To E\udot[-1]\To\Omega_B|_{\Mb}
$$
is an isomorphism. By the Nakayama lemma we may do so at a point $(F,s)\in P$. After dualising we get the map $V\to H^{1,3}(\Xb)$ given by the composition
\begin{align}
V\subset H^1(T_S)=\,H^1(T_{\Xb})&
\spreaddiagramcolumns{15pt}\xymatrix{\ar[r]^{\cup A(I\udot)} &}
\mathrm{Ext}^{2}(I\udot,I\udot)_{0} \ \subset\ \mathrm{Ext}^{2}(I\udot,I\udot) \nonumber \\
& \spreaddiagramcolumns{15pt}\xymatrix{\ar[r]^{\cup A(I\udot)} &}
\mathrm{Ext}^{3}(I\udot,I\udot \otimes \Omega_{\Xb})\Rt{\tr}H^{1,3}
(\Xb). \label{eqn1}
\end{align}
This uses the stable pairs analogue of Lemma \ref{KSclaim} (also proved in \cite[Proposition 13]{MPT}) to deduce that the composition of $E\udot\to\LL_{P/B}$ and the Kodaira-Spencer map $\LL_{P/B}\to\Omega_B[1]$ for $P$ coincides with the cup product of the Atiyah class and the Kodaira-Spencer class for $\Xb$.

In the proof of \cite[Proposition 11]{MPT}, it is observed that the above composition $H^{1}(T_{\Xb})\to H^{1,3}(\Xb)$ is equal to $\cup(-2\iota_*\beta)$. Thus on restriction to $V\subset H^1(T_S)$ it gives $-2$ times the isomorphism $\cup\beta$ of \eqref{nondegen}.
\end{proof}

\section{Invariants} \label{invts}
\subsection{Reduced Gromov-Witten invariants}

The reduced obstruction theory of Theorem \ref{reducedGW} gives, by \cite{BF1}, a virtual fundamental class which we call the \emph{reduced class}:
$$
[\Mb_{\!g,n}(S,\beta)]^{red}\in H_{2v}(\Mb_{\!g,n}(S,\beta)), \qquad
v=g-1 + \int_{\beta} c_{1}(S) + n + h^{0,2}(S).
$$
Integrating insertion cohomology classes over this gives the \emph{reduced Gromov-Witten invariants} of $S$. Namely, if $\sigma_i\in H^*(S,\Z)$ are cohomology classes, then\footnote{The $\sigma_i$ can be repeated, so for instance $R_{g,\beta}(S,\sigma_1^2\sigma_2)$ denotes $R_{g,\beta}(S,\sigma_1\sigma_1\sigma_2)$.}
\beq{Rgbeta}
R_{g,\beta}(S,\sigma_1\ldots\sigma_n):=
\int_{[\Mb_{\!g,n}(S,\beta)]^{red}}\prod_{i=1}^n\ev^*_i(\sigma_i)\ \in\ \Q.
\eeq
Here $\ev_i$ is the evaluation map from the $i$th marked point of the universal curve to $S$. So for a surface in the Noether-Lefschetz locus for $\beta$, the invariants give a virtual count of the curves in homology class $\beta$ which intersect $PD(\sigma_i)$.

\begin{remark}\textbf{\emph{Deformation invariance.}}
By standard theory \cite[Section 7]{BF1}, the
$R_{g,\beta}(\sigma_1\ldots\sigma_n)$ are invariant under deformations of $S$ within the Noether-Lefschetz locus. The usual arguments apply: given a smooth curve $Z$ mapping to the Noether-Lefschetz locus for $\beta$, we can make all of the constructions of the previous sections relative to $Z$ in the family over $Z$. (We do not even need to change notation; we can work with affine $Z$ and just let our ground ring be $\O_Z$ instead of $\C$.) The resulting obstruction theory is relative to $Z$, and restricts to the absolute obstruction theory of the previous section over any point of $Z$. As a result the relative virtual cycle on the relative moduli space over $Z$ pulls back, via the usual Gysin maps, to the virtual cycle on any fibre \cite[Proposition 7.2]{BF1}. The cohomology classes $\ev^*_i(\sigma^i)$ are defined on the relative moduli space, so by conservation of number \cite[Theorem 10.2]{Ful}, their integrals over a fibre of the virtual cycle is independent of the fibre. The same applies to the other invariants we define below.
\end{remark}
\medskip

In the usual way we can also define the same invariants \eqref{Rgbeta} without using marked points. Instead we use the universal map $f\colon\mathcal C\to S$ from the universal curve $\pi\colon\mathcal C\to\Mb_{\!g}(S,\beta):=
\Mb_{\!g,0}(S,\beta)$.
Then we claim that
\beq{Rgbeta2}
R_{g,\beta}(S,\sigma_1\ldots\sigma_n)=
\int_{[\Mb_{\!g}(S,\beta)]^{red}}\prod_{i=1}^n\pi_*f^*(\sigma_i).
\eeq
In fact we can remove one marked point at a time using the diagram
$$
\spreaddiagramrows{-6mm}
\spreaddiagramcolumns{-7mm}
\xymatrix{
\Mb_{\!g,n}(S,\beta) \ar[rrd]^{\ev_n}\ar[dr]_(.7)\rho\ar[dddr]_r \\
& \mathcal C \ar[r]_f\ar[dd]^{\pi} & S \\ \\
& \Mb_{\!g,n-1}(S,\beta).}
$$
The map $r$ forgets the $n$th marked point and stabilises the resulting curve and map, while $\rho$ maps the $n$th point to its image in the contracted curve. Since $\rho$ is birational, we find that
\beq{key}
r_*\ev_n^*(\sigma)=r_*\rho^*f^*(\sigma)=\pi_*f^*(\sigma).
\eeq
Iterating we can push all the way down from $\Mb_{\!g,n}$ to $\Mb_{\!g}$.
The compatibility of the ordinary obstruction theories of $\Mb_{\!g,n}$ and $\Mb_{\!g,n-1}$ is \cite[Axiom IV]{Beh}. For the reduced theories the same argument applies because they are defined by the semi-regularity map \eqref{reg} which is compatible with $r$: its construction \eqref{cxs} does not even see the marked points. The equality of \eqref{Rgbeta} and \eqref{Rgbeta2} follows. \medskip

Since $\Mb_{\!g,n}(X,\iota_*\beta)\subset\Mb_{\!g,n}(\Xb,\iota_*\beta)$ and
$P_n(X,\iota_*\beta)\subset P_n(\Xb,\iota_*\beta)$ are open immersions, they inherit the reduced obstruction theories of Theorems \ref{reducedGWX} and \ref{reducedPT} by restriction. But they are noncompact, so to define invariants we have to use residues and the virtual localisation formula. $T:=\C^*$ acts with weight one on the fibres of $X=K_S$ with fixed locus $S$. Therefore it acts on the moduli spaces
$\Mb_{\!g,n}(X,\iota_*\beta)$ and $P_n(X,\iota_*\beta)$. Its fixed loci are related to the curves in the zero-section $S$. For stable maps we get precisely $\Mb_{\!g,n}(S,\beta)$:

\begin{proposition} \label{GWloc}
The inclusion $\Mb_S\into\Mb_X^{T}$ is an isomorphism of stacks. Moreover $E\udot_{X,red}$ is naturally $T$-equivariant and its restriction to $\Mb_S$ has fixed and moving parts
\beqa
\left(E\udot_{X,red}|_{\Mb_X^{T}}\right)^{fix} &\cong& E\udot_{S,red}, \\
\left(E\udot_{X,red}|_{\Mb_X^{T}}\right)^{mov} &\cong& (R \pi_{*} f^{*} K_S \otimes\mathfrak t)^\vee,
\eeqa
where $\mathfrak t$ is the irreducible representation of weight $1$.
\end{proposition}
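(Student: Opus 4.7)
The plan has three steps: identify $\Mb_X^T$ with $\Mb_S$, split $E\udot_X|_{\Mb_S}$ into weight pieces using the normal bundle decomposition along $S$, and check that the semi-regularity reduction only affects the $T$-fixed summand. For the first step, $T=\C^*$ acts on $X=K_S$ by scaling fibres of $K_S\to S$, with fixed locus the zero-section $S$ and every other orbit isomorphic to $\C^*$. A $T$-fixed stable map $f\colon C\to X$ sends each irreducible component of $C$ into a single $T$-orbit, and since there is no non-constant morphism from a proper connected curve to the affine variety $\C^*$, every component must map into $S$. Thus any such $f$ factors through $\iota$, giving the isomorphism $\iota_*\colon\Mb_S\rt{\sim}\Mb_X^T$.

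For the second step, the $T$-action on $X$ lifts canonically to the universal stable map, so the complex $E\udot_X$ of \eqref{Edot} and the semi-regularity map of Theorem \ref{reducedGWX} are naturally $T$-equivariant. At $f\in\Mb_S$, the conormal sequence
$$
0\To\iota^*N^*_{S/X}\To\iota^*\Omega_X\To\Omega_S\To0
$$
splits equivariantly, since $\iota^*N^*_{S/X}=K_S^{-1}\otimes\mathfrak t^{-1}$ and $\Omega_S$ have different weights. The differential $f^*\Omega_X\to\Omega^{\log}_{\mathcal C/\Mb}$ factors through $f^*\Omega_S$, so the two-term complex in \eqref{Edot} splits equivariantly as
$$
\{f^*\Omega_S\to\Omega^{\log}_{\mathcal C/\Mb}\}\ \oplus\ \bigl(f^*K_S^{-1}\otimes\mathfrak t^{-1}\bigr).
$$
Applying $\bigl(R\pi_*R\hom(-,\O_{\mathcal C})\bigr)^\vee$ and using $R\hom(K_S^{-1},\O_{\mathcal C})=f^*K_S$ yields
$$
E\udot_X\bigl|_{\Mb_S}\ \simeq\ E\udot_S\ \oplus\ (R\pi_* f^*K_S\otimes\mathfrak t)^\vee,
$$
with the first summand of trivial weight and the second pure of weight $-1$.

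For the third step, $H^0(\Omega^2_{\Xb})$ is identified via the projection $q\colon\Xb\to S$ with $q^*H^0(K_S)=H^0(K_S)$, and hence carries trivial $T$-weight. The equivariant semi-regularity map $H^0(\Omega^2_{\Xb})\otimes\O_{\Mb}[1]\to E\udot_X$ therefore lands in the fixed summand, and by the naturality of its construction \eqref{cxs} under the zero-section $\iota$ it restricts on $\Mb_S$ to the semi-regularity map \eqref{reg} for $S$. Taking cones separately on fixed and moving summands then gives $(E\udot_{X,red}|_{\Mb_S})^{fix}=E\udot_{S,red}$ and $(E\udot_{X,red}|_{\Mb_S})^{mov}=(R\pi_* f^*K_S\otimes\mathfrak t)^\vee$, as required. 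The main technical point is the naturality of the semi-regularity map under $\iota$, which reduces to a diagram chase in the $\Xb$-analogue of \eqref{cxs}, using that all its arrows are pulled back from the zero section.
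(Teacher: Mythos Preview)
Your steps 2 and 3 are essentially the same as the paper's, and correct.  The gap is in step 1.  Your argument (``each component maps into a single $T$-orbit, and there is no non-constant map from a proper curve to $\C^*$'') only shows that the \emph{closed points} of $\Mb_X^T$ coincide with those of $\Mb_S$.  It does not show that $\iota_*\colon\Mb_S\hookrightarrow\Mb_X^T$ is an isomorphism of stacks: a priori $\Mb_X^T$ could carry a strictly thicker (non-reduced) scheme structure along $\Mb_S$, and your argument gives no control over maps from non-reduced Artinian bases.  (Incidentally, the cleaner version of the set-theoretic argument is: the automorphism group of a stable map is finite and $T=\C^*$ is connected, so a $T$-fixed stable map is literally $T$-invariant, hence has image in $X^T=S$.)

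The paper repairs this by reversing the order of your first two steps.  It first establishes the equivariant splitting
$$
E_X\udot|_{\Mb_S}\ \cong\ E_S\udot\ \oplus\ (R\pi_*f^*K_S\otimes\mathfrak t)^\vee
$$
(your step 2), so that $(E_X\udot|_{\Mb_X^T})^{fix}\cong E_S\udot$ along $\Mb_S$.  By \cite{GP} the left-hand side is a perfect obstruction theory for $\Mb_X^T$, while the right-hand side is one for $\Mb_S$.  Now an inductive Artinian lifting argument: for any length-$n$ Artinian $\C$-algebra $A_n$ and any surjection $A_{n+1}\twoheadrightarrow A_n$ with kernel $I$, the obstruction to lifting a map $a\colon\Spec A_n\to\Mb_X^T$ to $\Spec A_{n+1}$ lives in $\Ext^1(a^*(E_X\udot)^{fix},I)\cong\Ext^1(a^*E_S\udot,I)$, and similarly for the torsor of lifts.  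Hence lifts to $\Mb_X^T$ are in bijection with lifts to $\Mb_S$, which gives the stack isomorphism.  You should rearrange your proof accordingly: prove the splitting first, then deduce the isomorphism of stacks from it via this deformation-theoretic comparison.
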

\begin{proof}
The isomorphism
$$
T_X|_S\cong T_S\oplus K_S
$$
induces an isomorphism on $\Mb_S$,
\beq{fixobs}
E\udot_X|_{\Mb_S}\cong E_S\udot\oplus\big(R\pi_*(f^*K_S)\big)^\vee.
\eeq
The first summand carries the trivial $T$-action, the second carries the weight-$(-1)$ action induced from the action on the fibres of $K_S$. 

We want to show that the inclusion $\Mb_S\into\Mb_X^{T}$ is an isomorphism of stacks. It is sufficient to show that it induces an isomorphism on maps from $\Spec A_n$ to the moduli space, where $A_n$ is any Artinian $\C$-algebra of length $n$. The $n=0$ case is the obvious fact that $\Mb_S\into\Mb_X^{T}$ is a bijection of sets.

Inductively we fix a surjection $A_{n+1}\to A_n$ with ideal $I$, and a map
$$
a\colon\Spec A_n\to\Mb_S\into\Mb_X^{T}.
$$
We show that any lift to a map $\Spec A_{n+1}\to\Mb_X^{T}$ factors through $\Mb_S$.

By \cite[Theorem 4.5]{BF1} such a lift exists if and only if the obstruction class in $\Ext^1(a^*(E\udot_X|_{\Mb_X^T})^{fix},I)$ vanishes. (Here we have used the fact that the $T$-fixed part $(E\udot_X|_{\Mb_X^T})^{fix}$ of $E\udot_X|_{\Mb_X^T}$ provides an obstruction theory for $\Mb_X^T$ \cite{GP}.)

By the isomorphism \eqref{fixobs} this is the same as the obstruction in $\Ext^1(a^*E_S\udot,I)$ of finding a lift to $\Mb_S$. So if a lift to $\Mb_X^T$ exists, so does one to $\Mb_S$. By \cite[Theorem 4.5]{BF1} and \eqref{fixobs} the choices in such a lift are also the same
$$
\Hom(a^*(E\udot_X|_{\Mb_X^T})^{fix},I)\ \cong\ \Hom(a^*E_S\udot,I).
$$
It follows that the lifts that factor through $\Mb_S$ map isomorphically to the lifts to $\Mb_X^T$, as required.

Finally, by their very constructions, the semi-regularity maps of $\Xb,\,S$ intertwine the isomorphism \eqref{fixobs}:
$$
\xymatrix@=18pt{
H^{3,1}(\Xb)\otimes\O_{\Mb_S}[1] \ar@{=}[r]\ar[d] &
H^{2,0}(S)\otimes\O_{\Mb_S}[1] \ar[d] \\
E\udot_{\Xb}|_{\Mb_S} \ar@{=}[r] & E_S\udot\! &\hspace{-2cm}\oplus\,\big(R\pi_*(f^*K_S)\big)^\vee.}
$$
Taking cones gives the isomorphisms
$$
E_{X,red}\udot|_{\Mb_S}\cong E_{S,red}\udot\,\oplus\big(R\pi_*(f^*K_S)\big)^\vee
$$
over $\Mb_S\cong\Mb_X^T$.
\end{proof}

Therefore we can define reduced Gromov-Witten residue invariants of $X$ using
Graber-Pandharipande's virtual localisation formula \cite{GP}. 
That is, via $\big(E\udot_{X,red}|_{\Mb_X^{T}}\big)^{fix}$ we get a perfect obstruction theory for $\Mb_X^{T}$ and so a virtual cycle $[\Mb_X^{T}]^{red}$. Then, given equivariant cohomology classes $A_i\in H^*_T(\Mb_X)$, we define
$$
\int_{[\Mb_X]^{red}} \prod_iA_i\ :=\,\int_{[\Mb_X^{T}]^{red}}
\frac{1}{e(N^{vir})} \prod_iA_i\ \in\Q(t).
$$
Here $t=c_1(\mathfrak t)$ is the equivariant parameter -- the generator of $H^*(BT)=\Q[t]$ -- and the virtual normal bundle is defined to be $\big(E\udot_{X,red}
|_{\Mb_X^{T}}\big)^{\!\vee\,mov}$. Expressing this as a two-term complex $E_0\to E_1$ of equivariant bundles \emph{whose weights are all nonzero}\footnote{This is possible, and ensures that the $c_{top}(E_i)$ are invertible in the localised equivariant cohomology ring. Then $e(N^{vir})$ is independent of the choice of resolution $E_0\to E_1$.} its virtual equivariant Euler class is defined to be
$$
e(N^{vir}):=c_{top}(E_0)/c_{top}(E_1)\in H^*_T(\Mb_X^T)\otimes_{\Q[t]}\Q(t), $$
where $c_{top}$ is the $T$-equivariant Chern class. By Proposition \ref{GWloc}, this gives
$$
\int_{[\Mb_S]^{red}}
\frac{1}{e(R \pi_{*} f^{*} K_S \otimes\mathfrak t)} \prod_iA_i.
$$
In particular we can define
\beq{RgbetaX}
\curly R_{g,\beta}(X,\sigma_1\ldots\sigma_n):=
\int_{[\Mb_{\!g,n}(S,\beta)]^{red}}\frac{1}{e(R \pi_{*} f^{*} K_S \otimes \mathfrak t)}
\prod_{i=1}^n\ev^*_i(\sigma_i)\!\!
\eeq
in $\Q(t)$; compare \eqref{Rgbeta}. (Throughout we use curly letters to emphasise residue invariants in $\Z(t)$ or $\Q(t)$; straight letters denote numerical invariants in $\Z$ or $\Q$.)

Setting $r:=-\rk(R \pi_{*} f^{*} K_S)=-\chi(f^*K_S)=g-1+\int_\beta c_1(S)$, we have $1/e(R \pi_{*} f^{*} K_S \otimes t)=t^r+O(t^{r-1})$. In particular, we find

\begin{lemma} \label{X=S}
The leading coefficient of the reduced Gromov-Witten invariants of $X$ \eqref{RgbetaX} reproduces the reduced Gromov-Witten invariants of $S$ \eqref{Rgbeta}:
$$
\curly R_{g,\beta}(X,\sigma_1\ldots\sigma_n)\ =\ 
R_{g,\beta}(S,\sigma_1\ldots\sigma_n)\,t^r\ +\ O(t^{r-1}),
$$
where $r=g-1+\int_\beta c_1(S).\hfill\square$
\end{lemma}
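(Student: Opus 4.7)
The plan is to expand the equivariant Euler class $e(R\pi_*f^*K_S \otimes \mathfrak{t})$ as a Laurent series in $t$ and extract the leading term, then compare directly with the definition \eqref{Rgbeta} of $R_{g,\beta}(S,\sigma_1\ldots\sigma_n)$.

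First, I would verify that the virtual rank of $R\pi_*f^*K_S$ on $\Mb_{\!g,n}(S,\beta)$ is indeed $-r$. This is a fibrewise Riemann-Roch computation: for a stable map $f\colon C\to S$ with $C$ of arithmetic genus $g$ in class $\beta$,
\[
\chi(f^*K_S)\ =\ \deg f^*K_S+(1-g)\ =\ -\!\int_\beta c_1(S)+1-g\ =\ -r,
\]
by Grothendieck--Riemann--Roch this is the rank of $R\pi_*f^*K_S$ as a K-theory class globally on the moduli space.

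Next I would compute the leading term in $t$. Writing $R\pi_*f^*K_S=[E_0]-[E_1]$ as a difference of honest vector bundles with $\operatorname{rk}E_0-\operatorname{rk}E_1=-r$, and using the splitting principle, for any vector bundle $V$ of rank $m$ with Chern roots $\{v_j\}$ one has
\[
e(V\otimes\mathfrak t)\ =\ \prod_{j=1}^m(v_j+t)\ =\ t^m+c_1(V)t^{m-1}+\cdots+c_m(V).
\]
Therefore
\[
e(R\pi_*f^*K_S\otimes\mathfrak t)\ =\ \frac{e(E_0\otimes\mathfrak t)}{e(E_1\otimes\mathfrak t)}\ =\ t^{-r}\bigl(1+O(t^{-1})\bigr),
\]
so that $1/e(R\pi_*f^*K_S\otimes\mathfrak t)=t^{r}+O(t^{r-1})$, with leading coefficient $1$ and all lower terms living in $H^{>0}(\Mb_{\!g,n}(S,\beta))[t^{-1}]$.

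Finally I would substitute this expansion into the definition \eqref{RgbetaX}:
\[
\curly R_{g,\beta}(X,\sigma_1\ldots\sigma_n)\ =\ \int_{[\Mb_{\!g,n}(S,\beta)]^{red}}\!\bigl(t^r+O(t^{r-1})\bigr)\prod_{i=1}^n\ev_i^*(\sigma_i).
\]
Pulling the scalar $t^r$ out of the integral and recognising the remaining integral as \eqref{Rgbeta} yields the claim. There is no substantial obstacle; the only point requiring a moment of care is that the lower-order terms in $t$ have coefficients in positive-degree cohomology, so they do integrate to elements of $\Q$ (possibly zero) multiplied by strictly smaller powers of $t$, hence are genuinely absorbed into the $O(t^{r-1})$ error.
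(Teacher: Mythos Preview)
Your proposal is correct and is essentially the same argument the paper uses. In fact the paper does not give a separate proof at all: the sentence immediately preceding the lemma computes $r=-\rk(R\pi_*f^*K_S)=-\chi(f^*K_S)=g-1+\int_\beta c_1(S)$ and asserts $1/e(R\pi_*f^*K_S\otimes\mathfrak t)=t^r+O(t^{r-1})$, then marks the lemma with a $\square$; you have simply made explicit the Riemann--Roch and splitting-principle steps behind that sentence.
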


Note our controversial use of the term ``leading coefficient": it is possible for this be zero but that the whole polynomial $\curly R_{g,\beta}\ne0$.

\subsection{Reduced stable pair invariants}
The reduced obstruction theory of Theorem \ref{reducedPT} restricts from $P_n(\Xb, \iota_{*}\beta)$ to endow the open set $P_n(X, \iota_{*}\beta)$ with a perfect obstruction theory $E\udot_{X,red}$. The action of $T=\C^*$ on the fibres of $X=K_S$ defines a $T$-action on $P_{n}(X,\iota_{*}\beta)$ with respect to which $E\udot_{X,red}$ is $T$-equivariant. We will define stable pair invariants of $X$ using residues and the virtual localisation formula.

As usual let $\II\udot:=\{\O\to\mathbb F\}$ denote the universal complex over $X\times P_n(X,\iota_*\beta)$. The universal curve (the scheme-theoretic support of $\mathbb F$) represents $ch_2(\mathbb F)$. Using the usual projections \eqref{projns}, we define the following cohomology class for each $\sigma_i\in H^*(X,\Z)$
$$
\tau(\sigma_i):=\pi_{P*}\big(ch_2(\mathbb F) \cdot \pi_X^*(\sigma_i)\big) \in H^*(P_{n}(X,\iota_*\beta),\Z).
$$
For nonzero $\beta\in H_2(X,\Z)$, we would like to
define the stable pair invariant with insertions by
$$
\curly P^{red}_{\!n,\beta}(X,\sigma_1\ldots\sigma_m)\ :=\
\int_{[P_{n}(X,\beta)]^{red}}
\left(\prod_{i=1}^m \tau(\sigma_i)\right).
$$
We make sense of this as a residue by the virtual localisation formula \cite{GP}:
\beq{pairsdefn}
\curly P^{red}_{\!n,\beta}(X,\sigma_1\ldots\sigma_m)\ :=\
\int_{[P_{n}(X,\beta)^T]^{red}}\frac1{e(N^{vir})}
\left(\prod_{i=1}^m \tau(\sigma_i)\right)\in\Z(t).
\eeq
In contrast to the Gromov-Witten case, the fixed point locus can contain pairs that are supported set theoretically but not scheme theoretically on $S$. However, we next check that $P_S = P_{n}(S,\beta)$ does provide one connected component of the fixed locus. Therefore the invariants \eqref{pairsdefn} have a contribution coming entirely from $S$.

Over $P_S\subset P_X$ we slightly modify our usual notation and let $\iota_*\mathbb F$ denote the universal sheaf, where $\iota\colon S\into X$ is the inclusion of the zero-section. Then we have two universal complexes,
$$
\II\udot_S:=\{\O_{S\times P_S}\to\mathbb F\} \quad\mathrm{on\ }S\times P_S,
$$
and
$$
\II\udot_X:=\{\O_{X\times P_S}\to\iota_*\mathbb F\} \quad\mathrm{on\ }X\times P_S.
$$

\begin{proposition} \label{PTloc}
The subscheme $P_{n}(S,\beta)\subset P_{n}(X,\beta)^T$ is both open and closed in the fixed locus. On this component, the obstruction theory $E_X\udot$ has fixed part
\beq{pafix}
(E_X\udot|_{P_S})^{fix}\ \cong\ (R\pi_{P*}R\hom(\II\udot_S,\mathbb F))^\vee,
\eeq
and moving part its shifted dual
\beq{pamov}
(E_X\udot|_{P_S})^{mov}\ \cong\ 
R\pi_{P *} R\hom(\II\udot_S, \mathbb{F})[1] \otimes\mathfrak t^*
\eeq
twisted by the irreducible representation $\mathfrak t^*$ of weight $-1$.

Moreover, the reduced obstruction theory $E\udot_{X, red}|_{P_S}$ has the same moving part $(E\udot_X|_{P_S})^{mov}$ and
$$
(E_{X,red}\udot|_{P_S})^{\vee fix}\ \cong\ \mathrm{Cone}\big(
R\pi_{P*}R\hom(\II\udot_S,\mathbb F)\To H^{0,2}(S)\otimes\O_{P_S}[-1]\big),
$$
where the map is obtained as the composition
$$
R\pi_{P*}R\hom(\II\udot_S,\mathbb F)\To R\pi_{P*}R\hom(\mathbb F,\mathbb F)[1]\Rt{\tr}R\pi_{P*}\O[1]\Rt{\tau^{\ge1}}R^2\pi_{P*}\O[-1].
$$
\end{proposition}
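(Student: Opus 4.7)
I would prove this in three steps, paralleling the structure of Proposition \ref{GWloc}.

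\smallskip

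\emph{Step 1: $P_S\subset P_X^T$ is open and closed.} The inclusion is a closed immersion; its openness reduces, by an Artinian lifting argument, to matching the $T$-fixed part of the $P_X$ tangent/obstruction theory at a point $(F,s)\in P_S$ with that of $P_S$. Writing $F_X=\iota_*F$, the Koszul resolution $0\to q^*K_S^{-1}\to\O_X\to\iota_*\O_S\to0$ on $X=\mathrm{Tot}(K_S)$ gives $L\iota^*\II\udot_X=\II\udot_S\oplus K_S^{-1}\otimes F[1]$, the direct sum being forced by the $T$-weight decomposition (weights $0$ and $-1$). Adjunction then yields
$$R\Hom_X(\II\udot_X,F_X)=R\Hom_S(\II\udot_S,F)\oplus R\Hom_S(F,F)\otimes K_S[-1]$$
with summands of $T$-weight $0$ and $+1$ respectively. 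The fixed piece matches the $P_S$ deformation theory, producing the required bijection between $T$-equivariant lifts in $P_X$ and lifts in $P_S$.

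\smallskip

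\emph{Step 2: Fixed and moving parts of $E\udot_X|_{P_S}$.} Since $T_X|_S=T_S\oplus K_S$ with $K_S$ of weight $+1$, one has $\omega_X=\O_X\otimes\mathfrak t^*$, so $E\udot_X=R\pi_{P*}R\hom(\II\udot_X,\II\udot_X)_0[2]\otimes\mathfrak t^*$. From the triangle $\iota_*\mathbb F[-1]\to\II\udot_X\to\O_X$, applying $R\hom(\II\udot_X,-)$, using the formula of Step 1 for $R\hom_X(\II\udot_X,\iota_*\mathbb F)$, and removing the trace, the weight-$0$ piece is
$$R\hom(\II\udot_X,\II\udot_X)_0\big|^{wt\,0}=\iota_*R\hom_S(\II\udot_S,\mathbb F)[-1].$$
Pushing forward gives $(E\udot_X|_{P_S})^{mov}=R\pi_{P*}R\hom(\II\udot_S,\mathbb F)[1]\otimes\mathfrak t^*$ directly. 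For the fixed part I do \emph{not} extract the weight-$+1$ piece of $R\hom_0$ directly; instead I combine relative Serre duality on the CY3 $X$ with the trace-pairing self-duality of $R\hom_0$. Writing $A:=R\pi_{P*}R\hom_0$, these yield $A^\vee=A[3]\otimes\mathfrak t^*$, whence $(E\udot_X)^\vee=A[1]$, whose weight-$0$ part is $R\pi_{P*}R\hom(\II\udot_S,\mathbb F)$. Dualising back gives $(E\udot_X|_{P_S})^{fix}=(R\pi_{P*}R\hom(\II\udot_S,\mathbb F))^\vee$ as claimed.

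\smallskip

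\emph{Step 3: Reduced fixed part and the semi-regularity map on $S$.} The semi-regularity map \eqref{pairsreg} takes values in $H^{1,3}(\Xb)\otimes\O_P[-1]$, a trivial $T$-representation, so the cone defining $E\udot_{X,red}$ affects only the fixed part and leaves the moving part unchanged. Dualising and restricting to $P_S$, using the identification $H^{1,3}(\Xb)\cong H^{0,2}(S)$, one obtains
$$(E\udot_{X,red}|_{P_S})^{\vee\, fix}=\mathrm{Cone}\bigl(R\pi_{P*}R\hom(\II\udot_S,\mathbb F)\to H^{0,2}(S)\otimes\O_{P_S}[-1]\bigr).$$
It remains to check that the induced map is the stated composition through $R\pi_{P*}R\hom(\mathbb F,\mathbb F)[1]$, the trace to $R\pi_{P*}\O_{S\times P_S}[1]$, and $\tau^{\ge 1}$. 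This follows from functoriality of the Atiyah class along $\iota\colon S\hookrightarrow\Xb$, compatibility of $\tr$ with $\iota_*$, and the triangle $\mathbb F[-1]\to\II\udot_S\to\O_S$; it is the direct stable-pairs analogue of Lemma \ref{KSclaim} and of \cite[Proposition~13]{MPT}.

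\smallskip

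The main obstacle is Step 2: the careful weight-tracking through relative Serre duality on the non-compact CY3 $X$, in particular understanding that the derived dual on $X$ of a sheaf supported on $S$ absorbs a twist by the normal-bundle character $\mathfrak t$, which would otherwise appear to violate naive weight-flip under $\vee$. Once this is straight, everything else is a direct transcription of the Gromov-Witten argument, with the Atiyah-class compatibility in Step 3 handled exactly as in the stable maps case.
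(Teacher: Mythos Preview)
Your proposal is essentially the paper's proof. The ingredients are identical: the adjunction triangle $\mathbb F\otimes N^*_{S/X}\to L\iota^*\II\udot_X\to\II\udot_S$ (which you split by $T$-weight), the Serre-duality relation $(E\udot_X)^\vee\cong E\udot_X[-1]\otimes\mathfrak t$, and the Artinian lifting argument borrowed from Proposition~\ref{GWloc}. The only cosmetic difference is the order: the paper first derives \eqref{pafix} via a commutative diagram of triangles and then reads off \eqref{pamov} from the self-duality, whereas you extract the weight-$0$ piece of $R\hom_0$ directly to get \eqref{pamov} and then use the same self-duality for \eqref{pafix}.

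One caution on Step~3: the identification of the semi-regularity map is where the paper spends most of its effort, and it needs more than functoriality of the Atiyah class along~$\iota$. After the naturality step (factoring through $\ext^2_{\pi_P}(\iota_*\mathbb F,\iota_*\mathbb F)$), the paper uses the product decomposition $\iota_*\mathbb F\cong q^*\mathbb F\otimes\iota_*\O_S$ to write
\[
A(\iota_*\mathbb F)=q^*A(\mathbb F)\otimes 1_{\iota_*\O_S}+1_{\mathbb F}\otimes A(\iota_*\O_S),
\]
and identifies $A(\iota_*\O_S)$ with the canonical section $\tau\in H^0(\Omega_{\Xb}|_S\otimes\O_S(S))$ projecting tangent vectors of $\Xb$ to the normal bundle of~$S$. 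On the $T$-fixed summand $\ext^2_{\pi_P}(\mathbb F,\mathbb F)$ it is this normal-bundle piece, not the pulled-back $q^*A(\mathbb F)$, that survives and produces the isomorphism $H^{0,2}(S)\cong H^{1,3}(\Xb)$ after trace. Your outline is correct, but this is the step requiring the most care, not Step~2.
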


\begin{proof}
The triangle $\II\udot_X\to\O_{X\times P_S}\to\iota_*\mathbb F$ gives
the following commutative diagram of exact triangles over $P_S$:
$$
\spreaddiagramcolumns{-10pt}
\spreaddiagramrows{-6pt}
\xymatrix{
& R\pi_{P*}\O[1] \ar@{=}[r]\ar[d]^\id & R\pi_{P*}\O[1] \ar[d] \\
\!\!R\pi_{P*}R\hom(\II\udot_X,\iota_{*} \mathbb F) \ar[r]&
R\pi_{P*}R\hom(\II\udot_X,\II\udot_X)[1] \ar[r]\ar[d]&
R\pi_{P*}R\hom(\II\udot_X,\O)[1] \ar[d] \\
& R\pi_{P*}R\hom(\II\udot_X,\II\udot_X)_0[1] \ar[r]&
R\pi_{P*}R\hom(\iota_*\mathbb F,\O)[2].}
$$
By Serre duality down $\pi_P\colon X\times P_S\to P_S$, the last term is $$
(R\pi_{P*}(\iota_*\mathbb F\otimes\omega_{\pi_P}))^\vee[-1].
$$
But $\iota_*\mathbb F$ is fixed by $T$, while $\omega_{\pi_P}$ is just the pullback of $K_X$, which is trivial but with $T$-weight $-1$. Therefore taking fixed parts removes this term and gives the isomorphism\footnote{
$(R\pi_{P*}R\hom(\II\udot_X,\iota_*\mathbb F))^\vee$ provides the natural obstruction theory for the moduli space of stable pairs $(F,s)$. This is essentially proved in \cite{Ill} once one combines it with \cite[Theorem 4.5]{BF1}: see \cite[Sections 12.3-12.5]{JS} for a full account. However it is \emph{not} perfect in general, and to define stable pair invariants one uses instead $(R\pi_{P*}R\hom(\II\udot_X,\II\udot_X)_0[1])^\vee$ \cite{PT1}.
The two theories give the same tangents, but different obstructions. Here we see that they become the same on $S\subset K_S$ once we pass to fixed parts.\label{pairsfoot}}
\beq{need}
\big(R\pi_{P*}R\hom(\II\udot_X,\iota_{*} \mathbb F)\big)^{fix}\ \cong\ 
\big(R\pi_{P*}R\hom(\II\udot_X,\II\udot_X)_0\big)^{fix}[1]
\eeq
over $P_S\subset P_X^T$.

Following \cite[Appendix~C]{PT3}, we next consider the exact triangle
\begin{equation*}
\mathbb F\otimes N_{S/X}^* \To L \iota^{*}\II\udot_X \To \II\udot_S,
\end{equation*}
where now $\iota\colon S\times P_S\into X\times P_S$.
Applying $R\pi_{P*}R\hom(\ \cdot\ ,\mathbb F)$ gives the exact triangle
$$
R\pi_{P*}R\hom(\II\udot_S,\mathbb F) \To
R\pi_{P*}R\hom(\II\udot_X,\iota_{*} \mathbb F) \To
R\pi_{P*}R\hom(\mathbb F,\mathbb F \otimes K_S).
$$
The first term has $T$-weight 0; the last has $T$-weight 1. Taking fixed parts,
\beq{pfix}
\big(R\pi_{P*}R\hom(\II\udot_X,\iota_{*} \mathbb F)\big)^{fix}\ \cong\ 
R\pi_{P*}R\hom(\II\udot_S,\mathbb F).
\eeq
Combined with \eqref{need} this gives \eqref{pafix}.

By \cite{GP}, the left hand side of \eqref{pafix} defines a perfect obstruction theory for $P_X^T$, while the right hand side defines one\footnote{This obstruction theory -- the surface analogue of that in footnote \ref{pairsfoot} -- \emph{is} perfect. This is shown in \cite{Ott} using the fact that $\mathbb F$ has relative dimension 1 support over $P_S$, so
$R^{\ge2}\pi_{P*}\mathbb F=0$, which combines with the exact triangle $\mathbb F[-1]\to\II\udot_S\to\O$ to force $\ext^{\ge2}_{\pi_P}(\II\udot_S,\mathbb F)$ to vanish. The results of \cite{GP} together with \eqref{pafix} give a different proof of this fact. \label{OTT}} for $P_S$. Since they are isomorphic over $P_S\subset P_X^T$, the proof of Proposition \ref{GWloc} now shows that $P_S\into P_X^T$ is a local isomorphism, as claimed.

To derive \eqref{pamov} we use the Serre duality
$$
R\pi_{P*}R\hom(\II\udot_X,\II\udot_X)_0[1]\ \cong\ 
(R\pi_{P*}R\hom(\II\udot_X,\II\udot_X\otimes\omega_{\pi_P})_0)^\vee[-2].
$$
But $\omega_{\pi_P}\cong\O\otimes\mathfrak t^*$, so this says that
\beq{nearly}
(E_X\udot)^\vee\cong E_X\udot[-1]\otimes\mathfrak t
\eeq
on restriction to $P_S\subset P_X$. We have already seen above that $E_X\udot|_{P_S}$ has $T$-weights only $0$ and $-1$, so tensoring with $\mathfrak t$ makes moving parts fixed and vice-versa. Taking fixed parts of \eqref{nearly} therefore gives \eqref{pamov}. \medskip

Finally we have to identify the semi-regularity map on the obstruction theory. Since it is $T$-equivariant, it is only nonzero on the fixed part. Recall\footnote{From now on we work on the compactification $\Xb$, so $\II\udot$ denotes the complex $\{\O_{\Xb \times P_S} \to \iota_* \mathbb F\}$.} its definition \eqref{pairsreg} by cupping with the Atiyah class $A(\II\udot)$ of $\II\udot$ and taking trace. By naturality of the Atiyah class \cite[Proposition 3.11]{BuF}, the four left hand squares in the following diagram commute. The right hand square commutes because $\tr(a\circ b)=tr(b\circ a)$. The unmarked arrows are all induced by the connecting homomorphism
$\iota_*\mathbb F[-1]\to\II\udot$.
$$ \hspace{6mm}
\xymatrix@C=35pt@R=20pt{
\ext^2_{\pi_P}(\II\udot,\II\udot) \ar[r]^(.42){\circ A(\II\udot)} & \ext^3_{\pi_P}(\II\udot,\II\udot\otimes\LL_{\Xb\times P})
\ar[r]^(.65){\tr_{\II\udot}} & R^3\pi_{P*}\Omega_{\Xb} \ar@{=}[ddd] \\
\hspace{-2cm}\ext^1_{\pi_P}(\II\udot,\iota_*\mathbb F) \ar[r]^(.35){\circ A(\iota_*\mathbb F)}\ar@{=}[d]<-5ex>\ar<-20pt,-30pt>;<0pt,-10pt> & \ext^2_{\pi_P}(\II\udot,\iota_*\mathbb F\otimes\LL_{\Xb\times P}) \ar[u]\ar@{=}[d] \\
\hspace{-2cm}\ext^1_{\pi_P}(\II\udot,\iota_*\mathbb F) \ar[r]^(.35){A(\II\udot)\circ}
\ar<-20pt,-90pt>;<0pt,-110pt> &
\ext^2_{\pi_P}(\II\udot,\iota_*\mathbb F\otimes\LL_{\Xb\times P}) \ar[d] \\
\ext^2_{\pi_P}(\iota_*\mathbb F,\iota_*\mathbb F) \ar[r]^(.42){A(\iota_*\mathbb F)\circ} &
\ext^3_{\pi_P}(\iota_*\mathbb F,\iota_*\mathbb F\otimes\LL_{\Xb\times P})
\ar[r]^(.65){\tr_{\iota_*\mathbb F}} & R^3\pi_{P*}\Omega_{\Xb}.\!}
$$
Our semi-regularity map starts with the fixed part of $\ext^1_{\pi_P}
(\II\udot,\iota_*\mathbb F)$ on the left, takes it clockwise round the diagram to $R^3\pi_{P*}\Omega_{\Xb}$. Therefore this is the same as going anticlockwise, via $\ext^2_{\pi_P}(\iota_*\mathbb F,\iota_*\mathbb F)$. By adjunction and the isomorphism $L\iota^*\iota_*\cong
\id\oplus(\id\otimes K_S)[1]$ this is
\beq{tube}
\ext^2_{\pi_P}(\iota_*\mathbb F,\iota_*\mathbb F)\ \cong\
\ext^2_{\pi_P}(\mathbb F,\mathbb F)\oplus \ext^1_{\pi_P}(\mathbb F,\mathbb F\otimes K_S).
\eeq
We are only interested in the $T$-fixed part, i.e. the first summand above.

Now, $\iota_*\mathbb F\cong q^*\mathbb F\otimes\iota_*\O_S$, where $q\colon\Xb\to S$ is the projection, and we have omitted the pullback maps along $P$. Therefore
\beq{tube2}
A(\iota_*\mathbb F)=q^*A(\mathbb F)\otimes1_{\iota_*\O_S}+
1_{\mathbb F}\otimes A(\iota_*\O_S).
\eeq
The first summand acts trivially on the first summand of \eqref{tube} (since $\ext^3_{\pi_P}(\mathbb F,\mathbb F \break \otimes\Omega_{\Xb}|_S)=0$). For the second summand, $A(\iota_*\O_S)$ lies in
$$
H^1\big(R\hom(\iota_*\O_S,\iota_*\O_S)\otimes\Omega_{\Xb}\big)
\ =\ H^1\big(\iota_*\O_S\otimes\Omega_{\Xb}\big)\ \oplus\ H^0\big(\iota_*\O_S(S)
\otimes\Omega_{\Xb}\big)
$$
and is the canonical element of the second summand: the section $\tau$ of $T_{\Xb}^*
\otimes\O_S(S)$ that projects tangent vectors to $\Xb$ to the normal bundle of $S\subset\Xb$.

So applying the second summand of \eqref{tube2} to the first of \eqref{tube} and then taking trace gives the upper composition in the commutative diagram
$$
\xymatrix@C=18pt@R=10pt{
& \ext^2_{\pi_P}(\mathbb F,\mathbb F\otimes\Omega_{\Xb}|_S(S)) \ar[rd]^(.55){\tr} \\ \ext^2_{\pi_P}(\mathbb F,\mathbb F) \ar[ru]^(.45)\tau\ar[rd]^(.55){\tr} && R^2\pi_{P*}\Omega_{\Xb}|_S(S) \ar[r] & R^3\pi_{P*}\Omega_{\Xb}. \\
& R^2\pi_{P*}\O \ar[ru]^(.4)\tau}
$$
The right hand map is most easily defined by duality: it is Serre dual to the composition $R^0\pi_{P*}\Omega_{\Xb}^2\to R^0\pi_{P*}(\Omega_{\Xb}|_S\otimes
\Omega_{\Xb}|_S)\to R^0\pi_{P*}(\Omega_{\Xb}|_S(-S))$. Thus the composition
$R^2\pi_{P*}\O\to R^3\pi_{P*}\Omega_{\Xb}$ is an isomorphism: the pullback to $P$ of the isomorphism $\iota_*\colon H^{0,2}(S)\to H^{1,3}(\Xb)$.
Therefore the commutativity of this diagram proves the last claim of the Proposition.
\end{proof}

In particular, $P_S=P_n(S,\beta)$ carries a reduced perfect obstruction theory
$(E_{X,red}\udot|_{P_S})^{fix}$ and a corresponding reduced virtual cycle
\beq{Predvd}
[P_S]^{red}\in H_{2v}(P_S), \qquad v:=2h-2 + n + \int_{\beta} c_{1}(S) + h^{0,2}(S),
\eeq
of virtual dimension $v$. Here $v - h^{0,2}(S) = \rk(R\pi_{P *} R\hom(\II\udot_S, \mathbb{F}))=2h-2+n+\int_\beta c_1(S)$. Thus we can define the reduced invariants of $S$ to be
\beq{SPS}
P^{red}_{n,\beta}(S,\sigma_1\ldots\sigma_m)\ :=\ 
\int_{[P_{n}(S,\beta)]^{red}}\left(\prod_{i=1}^m\tau(\sigma_i)\right)\ \in\ \Z.
\eeq
Secondly we can use the virtual localisation formula to define the reduced \emph{residue} invariants of $S$ to be the contribution of the component $P_S\subset P_X^T$ to the stable pair invariants of $X$ \eqref{pairsdefn}. This is
\beq{Pres}
\curly P^{red}_{\!n,\beta}(S,\sigma_1\ldots\sigma_m)\ :=\
\int_{[P_{n}(S,\beta)]^{red}}\frac1{e(N^{vir})}
\left(\prod_{i=1}^m \tau(\sigma_i)\right)\ \in\,\Z(t),
\eeq
where $N^{vir}=(R\pi_{P *} R\hom(\II\udot_S, \mathbb{F}))^\vee[-1] \otimes \mathfrak t$. Thus $1/e(N^{vir})=t^r+O(t^{r-1})$, where $r:=\rk(R\pi_{P *} R\hom(\II\udot_S, \mathbb{F}))= v-h^{0,2}(S)$, and
$$
\curly P^{red}_{\!n,\beta}(S,\sigma_1\ldots\sigma_m)\ =\,\big(
P^{red}_{n,\beta}(S,\sigma_1\ldots\sigma_m)\big)t^r+O(t^{r-1}).
$$
That is the residue invariants contain as their leading coefficient the reduced stable pair invariants of $S$; cf. Lemma \ref{X=S}. Of course it is often the case (for degree reasons, for instance) that the latter vanishes while the former does not. We will see an example of this in Section \ref{nodal}.

Under certain circumstances, $P_S$ is all of $P_X^T$, so that
$\curly P^{red}_{\!n,\beta}(S)=\curly P^{red}_{\!n,\beta}(X)$. The following proposition, proved in the sequel \cite{KT2}, gives examples of this. We will not use this result in the current paper. 

\begin{proposition} [{\cite[Proposition 5.1]{KT2}}] \label{irreducible}
In the following two cases there is an isomorphism $P_{n}(X,\iota_{*}\beta)^{T} \cong P_{n}(S,\beta)$ 
\begin{itemize}
\item $\beta$ is irreducible, or
\item $K_S^{-1}$ is nef, $\beta$ is $(2\delta+1)$-very ample\footnote{By this we mean there exists a line bundle in $\Pic_\beta(S)$ which is $(2\delta+1)$-very ample. Recall \cite{BS} that this means that $H^0(L)\to H^0(L|_Z)$ is surjective for every length $2\delta+2$ subscheme $Z$ of $S$.} and $n \leq 1 - h + \delta$.
\end{itemize}
(Here $h$ is the arithmetic genus of curves in class $\beta$, determined by $2h-2=\beta^2-c_1(S).\beta$. The inequality on $n$ means the stable pairs have $\le\delta$ free points.)
\end{proposition}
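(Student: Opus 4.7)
The plan is to analyze $T$-fixed stable pairs on $X=K_S$ through their weight decomposition. Viewing $\pi\colon X\to S$ as the affine morphism with $\pi_*\O_X=\Sym^\bullet K_S^{-1}$, any $T$-fixed coherent sheaf $F$ on $X$ decomposes as a graded $\Sym^\bullet K_S^{-1}$-module $\pi_*F=\bigoplus_{k\ge 0}F_k$ with each $F_k$ a coherent sheaf on $S$, the module structure being encoded in multiplication maps $m_k\colon F_k\otimes K_S^{-1}\to F_{k+1}$. A $T$-equivariant section $s\colon\O_X\to F$ is determined by $s_0\colon\O_S\to F_0$, and stability translates into $(F_0,s_0)$ being a stable pair on $S$ in some class $\beta_0$, together with the condition that each composition $s_k\colon K_S^{-k}\to F_k$ (obtained from $s_0$ by iterated multiplication) has zero-dimensional cokernel. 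The claim $P_n(X,\iota_*\beta)^T=P_n(S,\beta)$ amounts to showing that $F_k=0$ for all $k\ge 1$.

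The weight filtration $F^{\ge m}:=\bigoplus_{k\ge m}F_k$ on $\pi_*F$ has successive quotients $\iota_*F_k$ as sheaves on $X$, so the identity $ch_2(F)=\iota_*\beta$ expands into $\sum_k\iota_*ch_1(F_k)=\iota_*\beta$, giving an effective decomposition $\beta=\beta_0+\sum_{k\ge 1}\gamma_k$ with $\gamma_k:=ch_1(F_k)\in H^2(S)$. Taking $k^*$ maximal with $F_{k^*}\ne 0$, the map $m_{k^*}$ lands in $0$, so $F_{k^*}$ is a genuine $\O_X$-subsheaf of $F$; purity of $F$ then forces each $F_k$ to be a pure one-dimensional sheaf on $S$ (any 0-dimensional subsheaf of $F_k$ would, via the iterated $m_j$'s, generate a 0-dimensional $\O_X$-submodule of $F$), so in particular $\gamma_{k^*}\ne 0$ whenever $k^*\ge 1$.

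Case (i) is then immediate: irreducibility of $\beta$ combined with $\beta_0\ne 0$ precludes any nontrivial effective decomposition, forcing $k^*=0$ and $F=\iota_*F_0$.

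For case (ii), $\beta$ need not be irreducible, so one estimates $\chi(F)=\sum_k\chi(F_k)$ from below. For each $k\ge 1$, purity of $F_k$ and the $0$-dimensional cokernel of $s_k$ give $\ker(s_k)=K_S^{-k}(-D_k)$ for an effective divisor $D_k$ with $[D_k]=\gamma_k$, and nef-ness of $K_S^{-1}$ yields
\[\chi(F_k)\ \ge\ \chi(K_S^{-k}|_{D_k})\ =\ 1-h(\gamma_k)-kK_S\cdot D_k\ \ge\ 1-h(\gamma_k),\]
and similarly $\chi(F_0)\ge 1-h(\beta_0)$. Combining these with the genus identity
\[h(\beta)=\sum_k h(\gamma_k)+\sum_{i<j}\gamma_i\cdot\gamma_j-(m-1),\qquad m:=\#\{k:\gamma_k\ne 0\},\]
(a direct consequence of $2h(\beta)-2=\beta^2+K_S\cdot\beta$) produces
\[\chi(F)\ \ge\ 1-h+\sum_{i<j}\gamma_i\cdot\gamma_j.\]
The hypothesis $n\le 1-h+\delta$ then forces $\sum_{i<j}\gamma_i\cdot\gamma_j\le\delta$. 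The main obstacle is the final numerical step: showing that the $(2\delta+1)$-very-ampleness of $\beta$ forces $\sum_{i<j}\gamma_i\cdot\gamma_j\ge\delta+1$ for any nontrivial effective decomposition $\beta=\beta_0+\sum\gamma_k$, producing the required contradiction and forcing $k^*=0$. The heuristic is that high very-ampleness of $\beta$ controls $\beta\cdot C$ from below for every effective summand $C$, so for a nontrivial decomposition enough intersection must accumulate in the cross-terms $\gamma_i\cdot\gamma_j$ rather than in the (bounded) self-intersections.
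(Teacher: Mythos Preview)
The paper does not prove this statement here; it is explicitly deferred to the sequel \cite{KT2} with the remark ``We will not use this result in the current paper.'' So there is no proof in this paper to compare against, and I will assess your argument on its own merits.

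Your weight-decomposition framework is the correct one, and case (i) is complete. In case (ii) your estimate $\chi(F)\ge 1-h+\sum_{i<j}\gamma_i\cdot\gamma_j$ is sound (purity of the $F_k$, the bound $\chi(F_k)\ge 1-h(\gamma_k)$ via $K_S^{-1}$ nef, and the genus identity are all fine). The gap you flag at the end is genuine as stated, but it can be closed by the same Hodge-index argument the paper itself uses in Proposition~\ref{vample}. Set $a_k:=\gamma_k\cdot\beta$; then $\sum_k a_k=\beta^2$, each $a_k\ge 2\delta+1$ by $(2\delta+1)$-very ampleness (choose $2\delta+2$ smooth points on an irreducible component of a representative of $\gamma_k$ and find a divisor in $|\beta|$ through all but one), and Hodge index gives $\gamma_k^2\le a_k^2/\beta^2$. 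Hence
\[
2\sum_{i<j}\gamma_i\cdot\gamma_j \;=\; \beta^2-\sum_k\gamma_k^2 \;\ge\; \beta^2-\frac{\sum_k a_k^2}{\beta^2} \;\ge\; \sum_k a_k-\max_k a_k \;\ge\; (m-1)(2\delta+1),
\]
using $\sum a_k^2\le(\max a_k)\sum a_k$ in the last step. For $m\ge2$ this gives $\sum_{i<j}\gamma_i\cdot\gamma_j\ge(2\delta+1)/2$, hence $\ge\delta+1$ by integrality, contradicting $n\le 1-h+\delta$. So your heuristic is exactly right and the missing step is a one-line application of Hodge index; with it, your proof of case (ii) is complete.
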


\noindent The previous proposition is false for arbitrary surfaces. For instance if $K_S=\O_S(C_0)$ is effective, then consider $\beta=nC_0$ and let $C$ be the $n$-fold thickening of $C_0$ along the fibres of $K_S$. This is $T$-fixed with $\chi = 1 - h$, but not scheme theoretically supported on $S$. However one can often make it true again by restricting to small linear subsystems in the space of curves. We will do this in Section \ref{nodal}.

\section{Insertions and linear systems} \label{insert}
\subsection{Det and div}

Let $\Hilb_\beta(S)$ denote the Hilbert scheme of curves\footnote{These are subschemes $Z\subset S$ of Hilbert polynomial $\chi(\O_Z(n))=\frac12\int_\beta c_1(S)-\beta^2/2+n\int_\beta c_1(\O(1))$ for every ample line bundle $\O(1)$ on $S$. In contrast to the threefold case, these are all pure curves, i.e. subschemes of pure dimension one, with no free or embedded points.} in $S$ in class $\beta$. Such a curve $C$ is a divisor with an associated line bundle $\O(C)$, defining an Abel-Jacobi map
$AJ\colon\Hilb_\beta(S)\to\Pic_\beta(S)$. Both of these spaces receive maps from both of our moduli spaces of curves in $S$:
\beq{maps}
\spreaddiagramcolumns{-7pt}
\spreaddiagramrows{-6pt}
\xymatrix{
\Mb_{\!g,n}(S,\beta)\!\! \ar[rd]^\div\ar[rdd]_\det && P_n(S,\beta) \ar[ld]_H\ar[ldd]^\det
\\ & \Hilb_\beta(S) \ar[d]^(.4){\hspace{-2.5pt}AJ} \\ & \Pic_\beta(S).}
\eeq
The map div takes a stable map to its \emph{divisor class}, which is its image with multiplicities. That is, if the irreducible components $C_i$ of its image are multiply covered $k_i$ times then the image is the divisor $\sum_ik_iC_i$ defined by the ideal sheaf
$\bigotimes_i\I_{C_i}^{k_i}$. This set theoretic map can be made into a morphism by taking a stable map $f\colon C\to S$ to the line bundle
$\det(f_*\O_C)\in\Pic_\beta(S)$ and its canonical section \cite{KM}.

The map $H$ takes a pair $(F,s)$ to the scheme theoretic support of $F$. In fact it is proved in \cite[Proposition B.8]{PT3} that $P_n(S,\beta)$ is a relative Hilbert scheme of points on the fibres of the universal curve over $\Hilb_\beta(S)$.

The fibre of $AJ$ over the line bundle $L$ is the full linear system $\PP(H^0(L))$. If we wish to derive invariants from just one such linear system, we can do so using insertions as in \cite{BL2}. We reprove their results in a slightly simpler way.

Picking a basepoint in $\Pic:=\Pic_\beta(S)$ gives a canonical isomorphism $\Pic\cong H^1(S,\R)/H^1(S,\Z)$. Therefore $H^1(\Pic,\R)$ is canonically isomorphic to the space $H^1(S,\R)^*$ of constant 1-forms on $\Pic$. Via the isomorphism $H_1(S,\R)\cong H^1(S,\R)^*$, a cycle $\gamma\in H_1(S)$ gets taken to the constant 1-form $\tilde\gamma$ whose pairing with a constant tangent vector $v\in H^1(S,\R)$ to $\Pic$ is
$$
\langle\tilde\gamma,v\rangle_{\Pic}=\int_\gamma v,
$$
where the right hand integral takes place on $S$. Pick a Poincar\'e line bundle $\mathcal P$ on $S\times\Pic$, and let $\pi\_S,\,\pi\_{\Pic}$ denote the obvious projections.

\begin{lemma} We have $\tilde\gamma=\pi\_{\Pic*}\big(\pi_S^*([\gamma])\cup
c_1(\mathcal P)\big)$, where $[\gamma]:=\mathrm{PD}(\gamma)\in H^3(S)$ is the Poincar\'e dual of $\gamma$.
\end{lemma}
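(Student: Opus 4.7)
The plan is to verify the equality of both sides as classes in $H^1(\Pic,\R)$ by pairing them against loops $\bar\alpha\subset\Pic$ that span $H_1(\Pic,\Z)$. Under $\Pic\cong H^1(S,\R)/H^1(S,\Z)$, each lattice vector $\alpha\in H^1(S,\Z)=\pi_1(\Pic)$ gives a loop $\bar\alpha$ whose constant tangent direction at the basepoint is $\alpha\in H^1(S,\R)=T_0\Pic$; these loops are all that is needed to test equality.

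First I would handle the left-hand side: because $\tilde\gamma$ is translation-invariant on the torus $\Pic$,
$$
\int_{\bar\alpha}\tilde\gamma\ =\ \langle\tilde\gamma,\alpha\rangle\_{\Pic}\ =\ \int_\gamma\alpha
$$
by the very definition given in the lemma.

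Then for the right-hand side I would apply the slant-product identity
$$
\int_{\bar\alpha}\pi\_{\Pic*}\bigl(\pi_S^*[\gamma]\cup c_1(\mathcal P)\bigr)\ =\ \int_{\gamma\times\bar\alpha}c_1(\mathcal P),
$$
and identify $c_1(\mathcal P|_{S\times\bar\alpha})\in H^2(S\times S^1)$. The Poincar\'e line bundle is characterised (up to pullback from $\Pic$) by $\mathcal P|_{S\times\{L\}}\cong L$, and as a consequence its monodromy around the loop $\bar\alpha$ is the class $\alpha\in H^1(S,\Z)=[S,U(1)]$. K\"unneth-decomposing therefore yields
$$
c_1(\mathcal P|_{S\times\bar\alpha})\ =\ \pi_S^*c_1(L_0)\,+\,\alpha\otimes d\theta,
$$
where $L_0$ is a basepoint of $\bar\alpha$ and $d\theta$ generates $H^1(S^1,\Z)$. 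The first summand pairs trivially with $\gamma\times\bar\alpha$ for bidegree reasons, so the right-hand side also evaluates to $\int_\gamma\alpha$.

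Since the two classes agree against every generator of $H_1(\Pic,\Z)$, they coincide in $H^1(\Pic,\R)$. The step that demands care is the monodromy computation of $c_1(\mathcal P|_{S\times\bar\alpha})$: this is the classical universal property of the Poincar\'e bundle, equivalent to saying that the $H^1(S)\otimes H^1(\Pic)$ K\"unneth component of $c_1(\mathcal P)$ is the canonical element under the natural identification $H^1(\Pic,\Z)=H^1(S,\Z)^\vee$. Once this is in hand the lemma follows by the pairing argument above.
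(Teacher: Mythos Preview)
Your proof is correct and rests on the same key fact as the paper's: the $H^1(S)\otimes H^1(\Pic)$ K\"unneth component of $c_1(\mathcal P)$ is the canonical element under $H^1(\Pic)\cong H_1(S)$. The paper invokes this directly and reads off that slanting $c_1(\mathcal P)$ against $\gamma\in H_1(S)$ yields $\tilde\gamma$; you instead verify the equality by pairing both sides against loops $\bar\alpha\in H_1(\Pic)$, which amounts to testing the same K\"unneth identity from the $\Pic$ side rather than the $S$ side. Your monodromy computation of $c_1(\mathcal P|_{S\times\bar\alpha})$ is a concrete repackaging of that universal property, so the two arguments are dual rephrasings of one another rather than genuinely different routes.
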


\begin{proof}
Consider the K\"unneth component of $c_1(\mathcal P)\in H^2(S\times\Pic)$ in
\beq{H1}
H^1(S)\otimes H^1(\Pic)\ \cong\ \Hom(H_1(S),H^1(\Pic)).
\eeq
Identifying the right hand side of \eqref{H1} with
$$
\Hom(H_1(S),H_1(S)),
$$
its class is the identity. Thus, considered as an element of
$\Hom(H_1(S),H^1(\Pic))$ it takes $\gamma\in H_1(S)$ to $\tilde\gamma\in H^1(\Pic)$, since this is what maps back to $\gamma$ under $H^1(\Pic)\cong H_1(S)$.

However, via the isomorphism \eqref{H1}, $c_1(\mathcal P)$ takes $\gamma$ to
$$
\pi\_{\Pic*}(c_1(\mathcal P)|_{\gamma\times\Pic})
\ =\ \pi\_{\Pic*}(c_1(\mathcal P)\cup\mathrm{PD}(\gamma\times\Pic)),
$$
which is $\pi\_{\Pic*}(c_1(\mathcal P)\cup\pi_{S\,}^*[\gamma])$ as required.
\end{proof}

The following is a result of Bryan and Leung \cite[Theorem 2.1]{BL2}.

\begin{proposition} \label{BL}
The cohomology class $\pi_*f^{*\,}[\gamma]$ is $\det^*\tilde\gamma$.
\end{proposition}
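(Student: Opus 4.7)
The plan is to reduce the statement to the preceding lemma via a projection formula and base change, using that the Abel--Jacobi data built into $\det$ is, by Knudsen--Mumford, exactly the divisor-theoretic information carried by the universal map $(f,\pi)$.

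First, I would form the map
\[
(f,\pi)\colon \mathcal C \To S\times\Mb_{g,n}(S,\beta),
\]
and compare it with the Poincar\'e line bundle through $\id\times\det$. By the very construction of the map $\det$ in \cite{KM}, the line bundle $\det(f_*\O_{\mathcal C})$ on $\Mb$ is the one attached to the divisor class $(f,\pi)_*[\mathcal C]$; equivalently,
\[
(\id\times\det)^*\mathcal P\;\cong\;\O_{S\times\Mb}\big((f,\pi)_*[\mathcal C]\big),
\]
so that in cohomology
\beq{step1}
(f,\pi)_*\1\;=\;c_1\!\big((\id\times\det)^*\mathcal P\big)\ \in\ H^2(S\times\Mb).
\eeq
This is the only nontrivial input and is the main obstacle: verifying that the pushforward of the fundamental class of the universal curve, with its multiplicities coming from non-injectivity of $f$ along components, really is the divisor class produced by the Knudsen--Mumford construction. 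This is essentially tautological from the definition of $\det$ via $\div$ as recalled just before the statement, but one has to keep track that multiple covers of components contribute with the appropriate multiplicity on both sides.

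Next, let $\pi_{\Mb}\colon S\times\Mb\to\Mb$ be projection. Since $\pi_{\Mb}\comp(f,\pi)=\pi$ and $\pi_S\comp(f,\pi)=f$, the projection formula combined with \eqref{step1} gives
\beqa
\pi_*f^*[\gamma] &=& \pi_{\Mb*}\big((f,\pi)_*(f,\pi)^*\pi_S^*[\gamma]\big) \\
&=& \pi_{\Mb*}\!\big(\pi_S^*[\gamma]\cup(f,\pi)_*\1\big) \\
&=& \pi_{\Mb*}\!\big(\pi_S^*[\gamma]\cup c_1((\id\times\det)^*\mathcal P)\big).
\eeqa

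Finally, the square
\[
\xymatrix@R=14pt{
S\times\Mb \ar[r]^{\id\times\det}\ar[d]_{\pi_{\Mb}} & S\times\Pic \ar[d]^{\pi_{\Pic}} \\
\Mb \ar[r]^{\det} & \Pic
}
\]
is cartesian with proper smooth vertical maps, so flat base change yields
$\pi_{\Mb*}\comp(\id\times\det)^*=\det^*\comp\pi_{\Pic*}$. Combined with the preceding lemma,
\[
\pi_*f^*[\gamma]\;=\;\det^*\pi_{\Pic*}\!\big(\pi_S^*[\gamma]\cup c_1(\mathcal P)\big)\;=\;\det^*\tilde\gamma,
\]
which is the desired identity.
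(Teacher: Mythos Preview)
Your proof is correct and follows essentially the same approach as the paper: both identify $(\id\times\det)^*\mathcal P$ with $\O_{S\times\Mb}(\div\mathcal C)$ (equivalently, $(f,\pi)_*\1=c_1((\id\times\det)^*\mathcal P)$), then combine the preceding lemma, base change along the cartesian square, and the projection formula. The only cosmetic difference is that the paper computes from $\det^*\tilde\gamma$ down to $\pi_*f^*[\gamma]$ while you go in the opposite direction.
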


\begin{proof}
We work with the diagram
\beq{penta}
\xymatrix{
& \mathcal C \ar[rd]^f\ar[ld]_\pi\ar[d]|(.45){\ \pi\times\hspace{-1pt}f} \\
\Mb \ar[d]_\det & S\times\Mb \ar[d]_\det\ar[l]^{p\_{\Mb}}\ar[r]_(.55){p\_S} & S \\
\Pic & S\times\Pic \ar[ru]_{\pi\_S}\ar[l]_(.57){\pi\_{\Pic}},}
\eeq
where $\mathcal C$ is the universal curve. On $S\times\Mb$, the divisor class $\div\mathcal C$ defines a line bundle $\O_{S\times\Mb}(\div\mathcal C)$ which, on restriction to any $S$-fibre, is isomorphic to the restriction of the pullback $\det^*\mathcal P$ of the Poincar\'e line bundle. The two therefore differ only by a line bundle pulled back from $\Mb$ (i.e. with no $S$-component). Thus for degree reasons we have
\beqa
{\det}^*\tilde\gamma &=& {\det}^*\pi\_{\Pic*}\big(\pi_S^*[\gamma]\cup
c_1(\mathcal P)\big) \\
&=& p\_{\Mb*}{\det}^*\big(\pi_S^*[\gamma]\cup c_1(\mathcal P)\big) \\
&=& p\_{\Mb*}\big(p_S^*[\gamma]\cup[\div\mathcal C]\big) \\
&=& p\_{\Mb*}\big((p_S^*[\gamma])|\_{\div\mathcal C}\big) \\
&=& \pi_*(\pi\times f)^*(p_S^*[\gamma]) \\
&=& \pi_*f^*[\gamma],
\eeqa
where in the penultimate line we have used the fact that the fundamental class of $\div\mathcal C$ is the same as that of $\mathcal C$ pushed forward by $\pi\times f$.
\end{proof}

Following Bryan and Leung \cite{BL2}, let $\gamma_i,\ i=1,\ldots,b_1=b_1(S)$ be an integral basis of $H_1(S,\Z)/$torsion, oriented so that
\beq{gamm}
\int_{\Pic}\tilde\gamma_1\wedge\ldots\wedge\tilde\gamma_{b_1}=1.
\eeq
Thus $\tilde\gamma_1\wedge\ldots\wedge\tilde\gamma_{b_1}=[pt]$ is Poincar\'e dual to a point of Pic, so by Proposition \ref{BL} and $b_1$ applications of \eqref{key},
$$
{\det}^*([pt])=r_*(\ev_1^*[\gamma_1]\wedge\ldots\wedge\ev_{b_1}^*[\gamma_{b_1}]),
$$
where $r\colon\Mb_{\!g,n+b_1}\to\Mb_{\!g,n}$ is the map that forgets the last $b_1$ points and stabilises.

Since $r$ intertwines $\ev_1,\ldots,\ev_n$ on $\Mb_{\!g,n+b_1}(S,\beta)$ and $\Mb_{\!g,n}(S,\beta)$, when we apply this to the reduced Gromov-Witten invariants \eqref{Rgbeta} we get
\begin{align} \nonumber
R_{g,\beta}(S,\sigma_1&\ldots\sigma_n[\gamma_1]\ldots[\gamma_{b_1}]) \\
&=\int_{[\Mb_{\!g,n}(S,\beta)]^{red}}\big(\ev_1^*\sigma_1\wedge\ldots\wedge
\ev_n^*\sigma_n\big)\wedge{\det}^*([pt]) \nonumber \\
&=\int_{j^![\Mb_{\!g,n}(S,\beta)]^{red}}\ev_1^*\sigma_1\wedge\ldots\wedge
\ev_n^*\sigma_n. \label{BrLe}
\end{align}
Here $j^!$ is the refined Gysin map \cite[Section 6.2]{Ful} for the inclusion of a point $j\colon\{L\}\into\Pic_\beta(S)$. \medskip

It would be nice to write $j^![\Mb_{\!g,n}(S,\beta)]^{red}$ in the form $[\Mb_{\!g,n}(S,|L|)]^{red}$. That is, we would like to see it as a reduced virtual cycle on the moduli space of stable maps whose associated divisor lies in the linear system $|L|$, defined by the Cartesian diagram
\beq{MbLdef}
\xymatrix@=18pt{
\Mb_{\!g,n}(S,|L|)\, \Into^j\ar[d] & \Mb_{\!g,n}(S,\beta) \ar[d]^{\det} \\
\{L\}\, \Into^j & \Pic_\beta(S).}
\eeq
Now we have the diagram
\beq{comm}
\xymatrix@=18pt{
N^*_{\{L\}\subset\Pic} \ar@{.>}[r]\ar[d] & j^*E\udot_{red} \ar[d] \\
\LL_{\Mb_{\!|L|}/\Mb_{\!\beta}}[-1] \ar[r] & j^*\LL_{\Mb_{\!\beta}} \ar[r] & \LL_{\Mb_{\!|L|}},}
\eeq
with $N^*_{\{L\}\subset\Pic}\cong\Omega_{\Pic}|_{\{L\}}\cong H^1(K_S)$ the conormal bundle to the point $L$ in $\Pic$. If we can fill in the dotted arrow, its cone is easily seen to give the required reduced perfect obstruction theory for $\Mb_{\!|L|}=\Mb_{\!g,n}(S,|L|)$.

The arrow is produced by simply repeating Section \ref{GWsemireg} for $H^1(K_S)$ in place of $H^0(K_S)$, giving
$$
H^1(K_S)\otimes\O_{\Mb_{\!|L|}}\To j^*E\udot_{red},
$$
analogously to the map $H^0(K_S)\otimes\O_{\Mb_{\!\beta}}\To E\udot[-1]$ of \eqref{reg}. However checking that the resulting diagram \eqref{comm} commutes appears technically difficult. We hope to return to this issue in the future.

\subsection{Point insertions and linear subsystems}
Having cut down to stable maps with image in a single linear system using insertions \eqref{BrLe}, we next show how point insertions correspond (at the level of virtual cycles) to cutting down to linear subsystems.

We work with the commutative diagram
\beq{insdg}
\xymatrix{
\mathcal C \ar[d]^a\ar@/^1pc/[rrd]^f\ar@/_2pc/[dd]_{\pi} \\
\div^*\mathcal C' \ar[r]_(.6){\div}\ar[d]^{p_2}\ar@/^1pc/[rr]^{f_2}
& \mathcal C' \ar[d]^{p_1}\ar[r]_{f_1}
& S \\
\Mb_{\!|L|} \ar[r]_{\div} & |L|,\!}
\eeq
where $\mathcal C'\to|L|$ is the universal curve over the linear system $|L|$ and $\pi\colon\mathcal C\to\Mb_{\!|L|}$ is the universal curve over the space of stable maps. Over $\Mb_{\!|L|}$ the latter maps to the former, contracting some components and replacing multiple covers by scheme-theoretic multiplicities, carrying the fundamental class of one to the other. In particular, $p_2^*=a_*\pi^*$ on homology.

Concentrating on the bottom right hand corner, we show first that
\beq{degree1}
p_{1*}f_1^*([pt])=h,
\eeq
where $[pt]\in H^4(S)$ is the Poincar\'e dual of a point of $S$, and $h\in H^2(|L|)$ is the hyperplane class. This follows from the computation
\begin{align*}
\int_{|L|}p_{1*}f_1^*([pt])h^{\dim|L|-1}&=
\int_{\PP^1}p_{1*}f_1^*([pt])=
\int_{\mathcal C'_{\PP^1}}f_1^*([pt]) \\&=
\deg\left(f_1|_{\mathcal C'_{\PP^1}}\colon\mathcal C'_{\PP^1}\to S\right)=1.
\end{align*}
Here $\PP^1\subset|L|$ is any pencil, with universal curve $\mathcal C'_{\PP^1}$ over it. Since any pencil of curves sweeps out $S$, the map $\mathcal C'_{\PP^1}\to S$ is birational and thus has degree 1, as claimed.

From \eqref{insdg} and \eqref{degree1} it follows that
$$
p_{2*}f_2^*([pt])=p_{2*}{\div}^*f_1^*([pt])={\div}^*p_{1*}f_1^*([pt])=
{\div}^*(h).
$$
Now $p_2^*=a_*\pi^*$ on homology implies that $p_{2*}=\pi_*a^*$ on cohomology. Therefore
\beq{formS}
{\div}^*(h)=\pi_*a^*f_2^*([pt])=\pi_*f^*([pt]).
\eeq
Thus we get the point insertion $\pi_*f^*([pt])$. Repeating $m$ times gives the $m$-point insertions as in \eqref{key}. In particular we get from \eqref{BrLe} that
\beq{linsys}
R_{g,\beta}(S,\sigma_1\ldots\sigma_n[\gamma\_1]\ldots[\gamma\_{b_1}][pt]^m)=
\int_{i^!j^![\Mb_{\!g,n}(S,\beta)]^{red}}\ev_1^*\sigma_1\wedge\ldots\wedge
\ev_n^*\sigma_n.
\eeq
As before $j^!$ is the generalised Gysin map for the inclusion
$j\colon\{L\}\into\Pic_\beta$, and we let $i$ be the inclusion of a codimension $m$ linear subsystem of $|L|$, where $m$ is the number of point insertions. So the above is an integral over the space of stable maps mapping to this linear subsystem.
 
\subsection{Extension to pairs and the threefold $X$}

We have concentrated on the moduli space of stable maps $\Mb_{\!g,n}(S,\beta)$, but the above results about insertions apply equally to $P_n(S,\beta)$. The proofs are the same (slightly easier even, since the universal curve, which in the stable pairs case is $ch_2(\mathbb F)$, embeds into $S\times P$ in the diagram analogous to \eqref{penta}, and the map $a$ is an isomorphism in \eqref{insdg}).
The upshot is the following analogue of \eqref{linsys}:
\beq{linsys2}
P^{red}_{n,\beta}(S,\sigma_1,\ldots\sigma_m[\gamma\_1]\ldots[\gamma_{b_1}]
[pt]^k)\ =\ 
\int_{i^!j^![P_n(S,\beta)]^{red}\ }\prod_{i=1}^m\tau(\sigma_i),
\eeq
with $i$ and $j$ as before. The reduced obstruction theory of Appendix \ref{Dmitri} is easily done relative to $\Pic_\beta(S)$ so the analogue of the commutative diagram \eqref{comm} is automatic. Therefore (though we will not need or use it) the right hand side of \eqref{linsys2} can equally be written as an integral over a reduced virtual cycle for $P_n(S,\PP):=P_n(S,|L|)
\times_{|L|}\PP$, where $i\colon\PP\into|L|$ is the linear subsystem. \medskip
 
With the obvious modifications the results also apply to $X=K_S$, and to the corresponding residue invariants. By pushing down curves in $X$ to $S$ before applying div \eqref{maps} we get maps from $P_n(X,\iota_*\beta)$ and $\Mb_{\!g,n}(X,\iota_*\beta)$ to $\Hilb_\beta(S)$ and $\Pic_\beta(S)$. Proposition \ref{BL} then holds with $[\gamma]$ replaced by its pullback to $X$, and the same therefore applies to formula \eqref{BrLe}.

Similarly formula \eqref{formS} also holds on $X$ when $[pt]$ is replaced by its pullback to $X$, i.e. by the Poincar\'e dual of a fibre of $K_S$. The same then applies to formula \eqref{linsys}.
 
\section{Counting nodal curves} \label{nodal}

\subsection{Severi degrees as reduced Gromov-Witten invariants}
In this section we show that the Severi degrees (as studied by G\"ottche \cite{Got}, for instance)
counting nodal curves in very ample linear systems can be seen as a special case of the reduced Gromov-Witten invariants \eqref{Rgbeta}. In particular we give these classical invariants
a more modern treatment using virtual cycles, allowing us to extend them to virtual counts outside of the very ample regime. 

Fix a line bundle $L$ with $H^1(L)=0$ (which almost certainly follows from the ampleness assumptions below) and $c_1(L)=\beta$ satisfying Condition \eqref{condition2}; in particular then $H^2(L)=0$ also.
Given a curve $C$, we let $g(C)$ denote its arithmetic genus, defined by $1-g(C):=\chi(\O_C)$ even when it is not reduced or connected. When $C$ is reduced its geometric genus $\overline g(C)$ is defined to be $g(\overline C)$, the genus of its normalisation.
Finally let $h$ denote the arithmetic genus of curves in $|L|$, so that
$2h-2=\beta^2-c_1(S).\beta$. 
 
\begin{proposition} \label{vample}
If $L$ is a $(2\delta+1)$-very ample line bundle on $S$ then the general $\delta$-dimensional linear system $\PP^\delta\subset|L|$ contains a finite number of \emph{irreducible} $\delta$-nodal curves appearing with multiplicity 1, and all other curves are reduced and irreducible with geometric genus $\overline g>h-\delta$. \end{proposition}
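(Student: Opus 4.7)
The strategy is to control the Severi stratification of $|L|$ and then apply Bertini to a generic $\PP^\delta$. Let $V_\delta \subset |L|$ denote the closure of the locally closed locus $V_\delta^\circ$ of reduced irreducible curves with exactly $\delta$ nodes and no further singularities.

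First I would check the standard deformation-theoretic description: at $C \in V_\delta^\circ$ with node set $N = \{p_1,\ldots,p_\delta\}$, the Zariski tangent space to $V_\delta$ inside $|L|$ is $H^0(L \otimes \I_N)/\langle s_C\rangle$. The hypothesis $(2\delta+1)$-very ample is far stronger than this requires: it already implies $(\delta-1)$-very ampleness, hence surjectivity of $H^0(L) \to H^0(L|_N)$ for any length-$\delta$ reduced subscheme $N$, so this tangent space has codimension exactly $\delta$ in $|L|$. Equisingular obstructions lie in $H^1(\I_N\otimes L)$, which is also killed by the ampleness, so $V_\delta$ is smooth at $C$ of the expected codimension $\delta$.

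Next I would bound three pathological loci in $|L|$ by codimension strictly greater than $\delta$: (i) non-reduced curves $2A+B \sim L$; (ii) reducible curves $A+B \sim L$ with $A,B$ nonzero effective; and (iii) reduced irreducible curves whose singular configuration has total $\delta$-invariant $\geq \delta$ but is not $\delta$ distinct nodes (for instance, one containing a cusp or a tacnode). For (i) and (ii), such families have dimension bounded by $\dim|A|+\dim|B|$, and $(2\delta+1)$-very ampleness of $L$ prevents cheap decompositions, giving the required strict bound. For (iii), the singular subscheme attached to a worse-than-nodal singularity of $\delta$-invariant $\geq\delta$ has scheme-theoretic length sufficient that $(2\delta+1)$-very ampleness separates it, forcing codimension $>\delta$; the numerical matching between $2\delta+1$ and the length of the singular scheme of a $\delta$-nodal configuration (of length $2\delta$, plus one further condition to exclude higher singularities or an extra node) is the crux.

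A generic $\PP^\delta\subset|L|$ then meets $V_\delta$ transversally (by Bertini applied to the smooth locus $V_\delta^\circ$) in finitely many points, all of which lie in $V_\delta^\circ$ and thus represent genuine irreducible $\delta$-nodal curves counted with multiplicity $1$; the pathological loci (i)--(iii) are avoided by the codimension count. Any remaining $C\in\PP^\delta$ is therefore reduced and irreducible with total $\delta$-invariant $\delta(C)<\delta$, so $\overline g(C) = h - \delta(C) > h - \delta$. The main obstacle is the second step: the sharp codimension bounds for reducible curves and for curves with singularities worse than $\delta$ nodes have to be extracted from the $(2\delta+1)$-very ampleness by careful case analysis on singularity types and on decompositions of $L$, following the dimension estimates standard in the theory of Severi varieties.
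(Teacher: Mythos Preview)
Your overall architecture --- control the Severi stratification, bound pathological loci by codimension $>\delta$, then apply Bertini --- is sound, and items (i) and (iii) together with the smoothness of $V_\delta^\circ$ are essentially what the paper imports wholesale by citing \cite[Proposition~2.1]{KST} (proved there under the weaker hypothesis of $\delta$-very ampleness). So for most of the statement you are reproving what the paper simply quotes.

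The substantive divergence, and the place where your argument has a genuine gap, is item (ii). You assert that the locus of reducible curves $A+B\sim L$ has codimension $>\delta$ because ``$(2\delta+1)$-very ampleness of $L$ prevents cheap decompositions,'' but you give no mechanism for turning very ampleness into the inequality $\dim|L|-\dim|A|-\dim|B|>\delta$ uniformly over all effective decompositions $L\sim A+B$. This is not a standard consequence of $k$-very ampleness, and I do not see how to complete it without further input (Clifford-type bounds, control of $h^1$, etc.) that you have not supplied.

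The paper sidesteps this dimension count entirely. Having already established (via \cite{KST}) that every curve in a general $\PP^\delta$ is \emph{reduced} with geometric genus $\ge h-\delta$, it argues by contradiction: if some curve in $\PP^\delta$ were reducible, write it as $A+B$ with no common components. The Hodge index theorem gives $A.B\ge (A.L)(B.L)/L^2$; $(2\delta+1)$-very ampleness forces $A.L\ge 2\delta+1$ (pick $2\delta+2$ smooth points on $A$ and separate them), and by symmetry one deduces $A.B>\delta$. Then normalising $A+B$ pulls apart more than $\delta$ intersection points, so its geometric genus is $<h-\delta$, contradicting the genus bound already in hand. Thus reducible curves are excluded not by bounding the dimension of their locus in $|L|$, but by showing any such curve already violates the genus inequality that a general $\PP^\delta$ enforces. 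This is both shorter and avoids the case analysis over decompositions that your approach would require.
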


\begin{proof}
This result without the irreducibility requirement is proved in \cite[Proposition 2.1]{KST} under the weaker assumption of $\delta$-very ampleness.

So to finish we assume for a contradiction that there exists a \emph{reducible} curve in $\PP^\delta\subset |L|$. Since it must be reduced, we can write it as $A+B$, with $A$ and $B$ nonzero and having no common irreducible components.

By the Hodge index theorem, $A^2\le(L.A)^2/L^2$ for any positive $L\in H^{1,1}(S)$ and arbitrary $A\in H^{1,1}(S)$. (Proof: $A-(L.A)L/L^2$ is orthogonal to $L$ so has square $\le0$.) Applied to our situation we get
\beq{hit}
A.B=A.(L-A)\ge A.L-\frac{(A.L)^2}{L^2}=\frac{(A.L)(B.L)}{L^2}\,.
\eeq
By symmetry we may assume that $A.L\le B.L$. Then 
$$
L^2=A.L+B.L\le 2B.L
$$
so that by \eqref{hit},
\beq{A.B}
A.B\ge\frac{A.L}2\ge\frac{2\delta+1}2\,.
\eeq
The inequality $A.L \geq 2 \delta+1$ follows from the $(2\delta+1)$-very ampleness of $L$ as follows. It suffices to show the inequality for any irreducible effective divisor $A$. Choose $2\delta+2$ smooth points on $A$. By the definition of $(2\delta+1)$-very ampleness, there is a divisor in $|L|$ which passes through the first $2\delta+1$ points, but not the last one. Therefore the divisor does not contain $A$, and $L.A\ge2\delta+1$, as required.
 

But \eqref{A.B} implies the normalisation of $A+B$ pulls apart $>\delta$ intersection points, which makes the geometric genus of $A+B$ less than $h-\delta$, a contradiction.
\end{proof}

We call these irreducible $\delta$-nodal curves
\beq{Gtt}
D_i,\quad i=1,\ldots,n_\delta(L).
\eeq
Here $n_\delta(L)$ is the intersection of $\PP^\delta$ with the Severi variety
$$
\overline{\{C\in|L|\colon C \mathrm{\ has\ }\delta
\mathrm{\ nodes}\}}\ \subset\ |L|,
$$
i.e. it is the Severi degree \eqref{Sevdeg} studied by G\"ottsche \cite{Got}.

Since they are irreducible, the normalisation maps $\Db_i\to D_i$ define stable maps $f_i\colon\Db_i\to S$ from smooth connected curves. In fact these are all of the points of $\Mb_{\!h-\delta}(S,\PP^\delta):=\div^{-1}(\PP^\delta)$:

\begin{proposition}
Let $L$ be a $(2\delta+1)$-very ample line bundle on $S$. The only points of $\Mb_{\!h-\delta}(S,\beta)$ whose divisor class lies in $\PP^\delta$ are the normalisations of the $\delta$-nodal curves $D_i$. These are smooth points of $\Mb_{\!h-\delta}(S,\beta)$, and smooth isolated points of $\Mb_{\!h-\delta}(S,\PP^\delta)$.
\end{proposition}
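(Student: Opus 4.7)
The plan is to handle the three claims of the proposition in order, using Proposition \ref{vample} for the set-theoretic identification and a local comparison with the classical Severi variety for the smoothness assertions.

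First, I identify the points. Let $f\colon C\to S$ be a stable map with $p_a(C)=h-\delta$ and $\div f\in\PP^\delta$, and write $D:=\div f$. By Proposition \ref{vample}, either $D$ equals one of the $D_i$'s, or $D$ is reduced irreducible with geometric genus $\overline g(D)>h-\delta$. Since $D$ is reduced and $[D]=\beta$, $f$ has degree one onto $D$ and factors through the normalisation of $D$. In the second case this forces $p_a(C)\ge\overline g(D)>h-\delta$, contradicting $p_a(C)=h-\delta$. In the first case the lift $\varphi\colon C\to\Db_i$ is of degree one, and an easy stability-plus-arithmetic-genus count (every contracted tree of rationals must carry $\ge 3$ special points and only raises $p_a(C)$) rules out everything except $C=\Db_i$.

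Second, I prove smoothness of $\Mb:=\Mb_{\!h-\delta}(S,\beta)$ at $[\Db_i]$ by locally identifying $\Mb$ with the Severi variety $V_\delta\subset\Hilb_\beta(S)$ of $\delta$-nodal curves in class $\beta$. Since $\Db_i\to S$ separates the two branches at each node, $f$ is an immersion and $N_f$ (from $0\to T_{\Db_i}\to f^*T_S\to N_f\to 0$) is a line bundle; the tangent space of $\Mb$ at $[\Db_i]$ is $H^0(\Db_i,N_f)$. The derivative of $\div\colon\Mb\to\Hilb_\beta(S)$ at $[\Db_i]$ identifies this with the equisingular tangent space $H^0(D_i,\I_\Sigma\otimes N_{D_i/S})$ at $D_i$, where $\Sigma\subset D_i$ is the nodal locus---the classical identification for nodal deformations. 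Under the $(2\delta+1)$-very ampleness assumption, $V_\delta$ is smooth at $D_i$ of the expected codimension $\delta$ in $|L|$ (the $\delta$ nodes impose independent conditions on sections of $L$; cf.\ \cite{KST}). Combined with the fact that the $\div$-fibre over $D_i$ is the reduced singleton $\{\Db_i\}$---normalisation being unique---this shows $\div$ is a local isomorphism onto $V_\delta$ near $[\Db_i]$, so $\Mb$ is smooth there.

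Third, $\Mb_{\!h-\delta}(S,\PP^\delta)=\Mb\times_{|L|}\PP^\delta$ corresponds, under the local isomorphism with $V_\delta$, to $V_\delta\cap\PP^\delta$ near $D_i$. Proposition \ref{vample} states that this intersection is finite and has multiplicity one at each $D_i$, whence $[\Db_i]$ is a smooth isolated point.

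The main obstacle is the second step: the tangent-space identification $H^0(\Db_i,N_f)\cong H^0(D_i,\I_\Sigma\otimes N_{D_i/S})$ together with the smoothness of $V_\delta$ at $D_i$ under $(2\delta+1)$-very ampleness. Both are classical in the Severi-variety literature, but some care is needed because $S$ may have positive irregularity or geometric genus, so the Severi variety is naturally considered inside $\Hilb_\beta(S)$ rather than within a single $|L|$.
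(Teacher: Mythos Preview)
Your overall strategy is sound and uses the same key ingredients as the paper --- the identification $f_*N_f\cong\I_\Sigma\otimes\O_{D_i}(D_i)$ and the smoothness of the Severi locus from \cite{KST} --- but there are two genuine gaps.

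\textbf{Step 1 is too compressed.} The claim that $f$ ``factors through the normalisation of $D$'' is not literally true: the domain $C$ is only nodal, and a nodal curve mapping birationally onto $D$ need not factor through $\Db$. What is true is that each non-contracted irreducible component of $C$ maps birationally to the irreducible curve $D$, hence (after normalising that component) through $\Db$; but $C$ may also carry contracted trees and internal nodes. The inequality $p_a(C)\ge\overline g(D)$ therefore requires exactly the kind of component-by-component genus bookkeeping the paper carries out: contracted components of genus $\ge1$, and contracted $\PP^1$'s carrying $\ge3$ nodes by stability, each strictly increase $g(C)-g(\overline\Sigma)$. Your ``easy stability-plus-arithmetic-genus count'' is the right idea, but it is the substance of the argument, not a triviality.

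\textbf{Step 2 has a logical gap.} From (i) $d(\div)$ an isomorphism $H^0(N_f)\cong T_{D_i}V_\delta$, (ii) $V_\delta$ smooth, and (iii) the fibre $\div^{-1}(D_i)$ a reduced point, it does \emph{not} follow that $\div$ is a local isomorphism onto $V_\delta$: take $\Mb=\Spec\C[\epsilon]/\epsilon^2\to V_\delta=\Spec\C[t]$, $t\mapsto\epsilon$. To close the gap you need a lower bound $\dim_{[\Db_i]}\Mb\ge\dim V_\delta$. One fix is to construct the inverse: \'etale-locally on $V_\delta$ the universal $\delta$-nodal curve admits a simultaneous normalisation, giving a section $V_\delta\to\Mb$ of $\div$. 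The paper takes a different route: it first shows directly that $T_{[\Db_i]}\Mb_{h-\delta}(S,\PP^\delta)=0$ (as the kernel of the surjection $T_{D_i}\PP^\delta\to H^0(\O_\Sigma(D_i))$ between $\delta$-dimensional spaces), and then observes that cutting $\Mb$ by $b_1(S)+\chi(L)-1-\delta$ equations down to an isolated point forces $\dim T_\Mb$ to equal the reduced virtual dimension $v$; hence the reduced obstruction space vanishes and $\Mb$ is smooth. This reverses your order (isolation first, then smoothness) and avoids appealing to simultaneous normalisation.
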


\begin{proof} Choose a stable map $f\colon C\to S$ whose divisor class lies in $\PP^\delta$. So $C$ is connected and at worst nodal. By Proposition \ref{vample} its image $\Sigma:=f_*C$ is reduced and irreducible of geometric genus $\ge h-\delta$, and $f$ is generically one-to-one except on contracted irreducible components.

Let $C_1,\ldots C_k$ denote the irreducible components of $C$ which are contracted, and let $C_{k+1}$ denote the one which surjects onto $\Sigma$. Therefore $\overline\Sigma=\overline C_{k+1}$ and
$\overline C=\overline C_1\sqcup\ldots\sqcup\overline C_{k+1}$.

We want to show that $g(C)\ge h-\delta$ with equality only if $(C,f)$ is the normalisation of one of the $\delta$-nodal curves $D_i$. We have
$$
g(C)-1=\sum_{i=1}^{k+1}(g(\overline C_i)-1)+d,
$$
where $d=g(C)-g(\overline C)$ is the number of nodes of $C$, and
$$
g(\overline\Sigma)-1=g(\overline C_{k+1})-1.
$$
Thus
\beq{gg}
g(C)-g(\overline\Sigma)=\sum_{i=1}^k(g(\overline C_i)-1)+d.
\eeq
Contracted components of genus $g(\overline C_i)\ge2$ contribute strictly positively to \eqref{gg}. Contracted components of genus 1 must contain one of the nodes of $C$ by stability (or connectedness), forcing $d>0$ and therefore also contributing strictly positively to \eqref{gg}. Finally contracted $\PP^1$s must contain at least 3 of the nodes in $C$, by stability. So if there are $p>0$ contracted $\PP^1$s then there must be $\ge3p$ preimages of nodes upstairs on $\overline C$ and so $\ge3p/2$ nodes downstairs on $C$. In particular $d\ge3p/2>p$, so again \eqref{gg} is strictly positive.

It follows that
$$
g(C)\ge g(\overline\Sigma)\ge h-\delta,
$$
with equality implying that there are no contracted components, no nodes ($d=0$), and, by Proposition \ref{vample}, $\Sigma$ must be one of the $\delta$-nodal curves $D_i$. Thus $C$ is smooth and $f$ is the normalisation of the image $D_i$. \medskip

Finally we deal with the deformation theory of a nodal curve $D\subset S$. The ``multiplicity 1" statement of Proposition \ref{vample} refers to the scheme structure on the locus of nodal curves defined by locally pulling back the reduced scheme structure from the miniversal deformation space of the singularity \cite[proof of Proposition 2.1]{KST}. What is in fact proved is that the locus of $\delta$-nodal curves is smooth of codimension $\delta$, and then $\PP^\delta$ is chosen transverse to it. Equivalently, the composition
\beq{defcomp}
T_D\PP^\delta\to H^0(\O_D(D))\to H^0(\O_Z(D))
\eeq
is surjective. Here $Z\subset D$ is the singular set of $D$ -- the union of its nodes -- and $H^0(\O_Z(D))$ is its miniversal deformation space.

We want to relate this to the deformation theory of the resulting stable map $f\colon\Db\to S$ given by normalising $D$. Since $f$ is an immersion, $T_{\Db}\to f^*T_S$ is an injection; we define $N_f$ to be its cokernel. By local calculation,
$$
f_*N_f\cong\I_Z(D)\subset\O_D(D),
$$
where $Z\subset D$ is the union of the nodes of $D$. The usual stable map deformation-obstruction theory \eqref{Edot} reduces in this case to
$$
(E\udot)^\vee=R\Gamma\big(T_{\Db}\to f^*T_S\big)=R\Gamma(\Db,N_f)=R\Gamma(D,f_*N_f)
=R\Gamma(D,\I_Z(D)).
$$
In particular the vector space of first order deformations is $H^0(\I_Z(D))$,
and those with divisor class in $\PP^\delta$ are given by the intersection in $H^0(\O_D(D))$ of $T_D\PP^\delta$ and $H^0(\I_Z(D))$.
That is, the Zariski tangent space to $\Mb_{\!h-\delta}(S,\PP^\delta)$ at $(D,f)$ is the kernel of the composition \eqref{defcomp}. But that map is surjective between vector spaces of dimension $\delta$, so has kernel 0. Therefore the $(\Db_i,f_i)\in\Mb_{\!h-\delta}(S,\PP^\delta)$ are isolated points as claimed.

From this also follows the fact that $\Mb_{\!h-\delta}(S,\beta)$ is smooth at $(\Db_i,f_i)$. In fact let $d$ denote the dimension of its Zariski tangent space at $(\Db_i,f_i)$, and let $v$ be its virtual dimension \eqref{redvd} in the reduced obstruction theory. As for any space with perfect obstruction theory, we have
$$d\ge v,$$
with equality only if $\Mb_{\!h-\delta}(S,\beta)$ is smooth at $(\Db_i,f_i)$.
Cutting down by $h^{0,1}(S)$ equations to $\Mb_{\!h-\delta} (S,|L|)$ and then by a further $\chi(L)-1-\delta$ equations to $\Mb_{\!h-\delta}(S,\PP^\delta)$, the Zariski tangent space has complex dimension at least
$$
v-h^{0,1}(S)-\chi(L)+1+\delta=0.
$$
But the $(\Db_i,f_i)$ are isolated in $\Mb_{\!h-\delta}(S,\PP^\delta)$, so the above inequalities are both equalities, and the $(\Db_i,f_i)$ indeed define smooth points of $\Mb_{\!h-\delta}(S,\beta)$.

(Alternatively one can compute by the deformation theory above that the \emph{reduced} obstruction space vanishes.)
\end{proof}

Combining this with \eqref{linsys} we find that the reduced Gromov-Witten invariants with insertions
\beq{insertS}
(\ldots):=\big(S,[\gamma\_1]\ldots[\gamma\_{b_1(S)}][pt]^{\chi(L)-1-\delta}\big)
\eeq
equal
$$
R_{h-\delta,\beta}(\ldots)=n_\delta(L).
$$
G\"ottsche conjectured that the numbers $n_\delta(L)$ should be degree-$\delta$ polynomials in the four numbers $L^2,K_S.L,K_S^2,c_2(S)$ when $L$ is at least $(5\delta-1)$-very ample. This was proved by Tzeng \cite{Tze}, and later in \cite{KST} for $L$ at least $\delta$-very ample using the stable pair methods of the next Section.

By Lemma \ref{X=S} the reduced residue Gromov-Witten invariants of $X$ contain the same numbers. (When working on $X$ we use the insertions
\beq{insertX}
(\ldots):=\big(X,q^*[\gamma\_1]\ldots q^*[\gamma\_{b_1(S)}]q^*[pt]^{\chi(L)-1-\delta}\big)
\eeq
pulled back from \eqref{insertS} by the projection $q\colon X\to S$.)
In fact for degree reasons the other terms in Lemma \ref{X=S} all vanish. Summarising then, we have:

\begin{theorem} \label{Gottcha}
Let $L$ be a $(2\delta+1)$-very ample line bundle with $H^1(L)=0$ and $c_1(L)=\beta$ satisfying Condition \eqref{condition2}. Then the reduced Gromov-Witten invariants of both $(S,\beta)$ and $(X=K_S,\iota_*\beta)$ include the 
Severi degrees $n_\delta(L)$ of $\delta$-dimensional linear systems in $|L|$:
\beqa
R_{h-\delta,\beta}(\ldots) &=& n_\delta(L), \\
\curly R_{h-\delta,\beta}(\ldots) &=& n_\delta(L).\,t^{h-\delta-1+\int_\beta c_1(S)}.
\eeqa
Here $(\ldots)$ denotes either of the insertions \eqref{insertS} on $S$ or \eqref{insertX} on $X$, and $n_\delta(L)$ \eqref{Gtt} is a universal degree-$\delta$ polynomial in $\beta^2,\,\int_\beta c_1(S),\,c_1(S)^2$ and $c_2(S).\hfill\square$
\end{theorem}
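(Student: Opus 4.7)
The plan is to apply formula \eqref{linsys} to rewrite the reduced invariant as a refined Gysin integral on $\Mb_{\!h-\delta}(S,\beta)$, and then use the preceding proposition to reduce this integral to an honest count of the isolated smooth points $(\Db_i,f_i)$.

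First, feeding the insertions \eqref{insertS} (with no $\sigma_i$ and $m=\chi(L)-1-\delta$ point insertions) into \eqref{linsys} gives
$$
R_{h-\delta,\beta}(\ldots)\ =\ \deg\, i^!j^![\Mb_{\!h-\delta}(S,\beta)]^{red},
$$
where $j\colon\{L\}\into\Pic_\beta(S)$ and $i\colon\PP^\delta\into|L|$ have complex codimensions $h^{0,1}(S)$ and $\chi(L)-1-\delta$. A direct calculation using Riemann-Roch for $L$ together with the adjunction formula shows the reduced virtual dimension $v=h-\delta-1+\int_\beta c_1(S)+h^{0,2}(S)$ equals $h^{0,1}(S)+\chi(L)-1-\delta$, so the right hand side is the degree of a genuine $0$-cycle. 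By the preceding proposition its support is the finite collection $\{(\Db_i,f_i)\colon i=1,\ldots,n_\delta(L)\}$, at each of which $\Mb_{\!h-\delta}(S,\beta)$ is smooth of dimension $v$ while the cut-down moduli has $0$-dimensional Zariski tangent space. Equivalently, $(\det,\div)\colon\Mb_{\!h-\delta}(S,\beta)\to\Pic_\beta(S)\times|L|$ is transverse to $\{L\}\times_{|L|}\PP^\delta$ at each $(\Db_i,f_i)$; moreover the match between reduced virtual dimension and local dimension on the smooth locus means the reduced virtual class coincides locally with the ordinary fundamental class. Hence $i^!j^!$ collapses to the genuine transverse intersection, each $(\Db_i,f_i)$ contributes $1$, and $R_{h-\delta,\beta}(\ldots)=n_\delta(L)$.

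For the residue version on $X$, the analogue of \eqref{linsys} noted at the end of Section \ref{insert} yields
$$
\curly R_{h-\delta,\beta}(X,\ldots)\ =\ \int_{i^!j^![\Mb_{\!h-\delta}(S,\beta)]^{red}}\frac{1}{e(R\pi_*f^*K_S\otimes\mathfrak t)}.
$$
Writing $1/e(R\pi_*f^*K_S\otimes\mathfrak t)=t^r+O(t^{r-1})$ with $r=h-\delta-1+\int_\beta c_1(S)$, every subleading coefficient is a Chern-class expression of strictly positive cohomological degree, so vanishes against the $0$-dimensional cycle $i^!j^![\Mb]^{red}$. Only the leading $t^r$ contributes, giving $\curly R_{h-\delta,\beta}(X,\ldots)=n_\delta(L)\,t^{h-\delta-1+\int_\beta c_1(S)}$ and matching Lemma \ref{X=S}. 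The universality of $n_\delta(L)$ as a degree-$\delta$ polynomial in $\beta^2,\ \int_\beta c_1(S),\ c_1(S)^2,\ c_2(S)$ is G\"ottsche's conjecture, proved by Tzeng.

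The only non-formal step is the multiplicity-$1$ assertion for the Gysin operations, which I expect to be the main technical point. Its proof reduces to a transversality check that is entirely forced by the preceding proposition: smoothness of $\Mb_{\!h-\delta}(S,\beta)$ at each $(\Db_i,f_i)$, the equality of reduced virtual dimension and actual local dimension (ensuring the reduced virtual class is locally the fundamental class), and the codimension identity $v=h^{0,1}(S)+\chi(L)-1-\delta$ (ensuring the cut is transverse). No further obstruction-theoretic calculation beyond what is already in that proposition is needed.
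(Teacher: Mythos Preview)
Your proposal is correct and follows essentially the same route as the paper. The paper's ``proof'' is in fact just the discussion preceding the theorem statement (note the $\square$): combine the preceding proposition with \eqref{linsys} to get $R_{h-\delta,\beta}(\ldots)=n_\delta(L)$, invoke Lemma~\ref{X=S} together with a degree argument for the $X$ version, and cite Tzeng/KST for universality --- exactly your outline, though you spell out the transversality and multiplicity-$1$ reasoning more explicitly than the paper does.
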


Therefore $R_{h-\delta,\beta}(S,[\gamma\_1]\ldots[\gamma\_{b_1(S)}][pt]^{\chi(L)-1-\delta})$ gives us one way of extending 
Severi degrees to the case when $L$ is not very ample, and has possibly nonvanishing $H^1$. It is a virtual count of irreducible $\delta$-nodal curves satisfying the incidence conditions.

\subsection{Severi degrees as reduced stable pair invariants}

One can encode the reduced residue Gromov-Witten invariants $\curly R_{g,\beta}$ in BPS form. By Proposition \ref{vample} all curves in $\PP^\delta$ are reduced and irreducible. In particular, the stable maps in $\Mb_{\!g}(S,\PP^\delta)$ all have irreducible image and involve no multiple covers. So the universal formula for the BPS invariants $r_{g,\beta}\in\Q(t)$ reduces to
\beq{redGV}
\sum_{g=0}^\infty\curly R_{g,\beta}(\ldots)u^{2g-2}=\sum_{g=0}^\infty r_{g,\beta}
(\ldots)(2\sin u/2)^{2g-2},
\eeq
where we use the same insertions $(\ldots)$ as above (\ref{insertS}, \ref{insertX}).
Via the Gopakumar-Vafa, MNOP and stable pairs conjectures \cite{GV, MNOP, PT1}, all extended to the reduced and equivariant cases, the $r_{g,\beta}$ defined by \eqref{redGV} should lie in $\Z(t)$ and can also be calculated via universal formulae in the reduced residue stable pair invariants.

The leading $u^{2h-2\delta-2}$ term in \eqref{redGV} states that in genus $h-\delta$, the reduced BPS invariants are just the reduced residue Gromov-Witten invariants, which by Lemma \ref{X=S} are the reduced Gromov-Witten invariants up to a shift in the equivariant parameter:
\begin{align} \nonumber
r_{h-\delta,\beta}(\ldots)=\curly R_{h-\delta,\beta}(\ldots)
&=R_{h-\delta,\beta}(\ldots).\,t^{\!h-\delta-1+\int_\beta c_1(S)} \\
&=n_\delta(L).\,t^{h-\delta-1+\int_\beta c_1(S)}. \label{BPSGott}
\end{align}
\medskip

The MNOP and Gopakumar-Vafa conjectures state that this should equal a linear combination of stable pair invariants by the universal formulae of \cite{PT1,PT3}. Again things simplify in our case to
\beq{danny}
\sum_{i=0}^\infty r_{i,\beta}(\ldots)q^{1-i}(1+q)^{2i-2}\ =\
\sum_{i=1-h}^\infty\curly P^{red}_{\!i,\beta}(\ldots)q^i.
\eeq
These equations can be inverted to define the $r_{i,\beta}$ as linear combinations of the $\curly P^{red}_{\!i,\beta}$:
\beq{bps-1}
r_{i,\beta}(\ldots)=\left\{\!\!\begin{array}{ll}
0 & i>h, \\
\curly P^{red}_{\!1-h,\beta}(\ldots) & i=h, \\
\curly P^{red}_{\!1-i,\beta}(\ldots)-\sum_{k=i+1}^h\left(\!\!
\begin{array}{c} 2k-2 \\ k-i\end{array}\!\!\right)r_{k,\beta}(\ldots)\quad & i<h.
\end{array}\right.
\eeq
So one way of stating the conjecture is that the $r_{i,\beta}$ defined by (\ref{bps-1}) agree with the $r_{i,\beta}$ defined by \eqref{redGV}.

In this section we will prove this conjecture for the BPS number
$r_{h-\delta,\beta}$, thus showing that the 
Severi degrees are also given by a linear combination of reduced stable pair invariants.
This was the motivation behind the paper \cite{KST}.

We recall from \cite[Proposition B.8]{PT3} that $P_i(S,\beta)$ is the relative Hilbert scheme of $i-1+h$ points on the fibres of the universal curve over $\Hilb_\beta(S)$. (We describe this isomorphism in \eqref{Rhilb} in Appendix \ref{Dmitri}.) Similarly
$$
P_i(S,\PP^j)=\Hilb^{i-1+h}(\mathcal C/\PP^j)
$$
for any linear system $\PP^j\subset|L|$. And these spaces are smooth for general $\PP^j$ \cite[Section 4]{KST}.

\begin{theorem} \label{GottchaP}
Let $L$ be a $(2\delta+1)$-very ample line bundle with $H^1(L)=0$ and $c_1(L)=\beta$ satisfying Condition \eqref{condition} \emph{(\!}respectively Condition \eqref{condition2}\emph{)}.

The reduced stable pair invariants $\curly P^{red}_{\!n,\beta}(S)$ \emph{(\!}respectively
$\curly P^{red}_{\!n,\,\iota_*\beta}(X))$ contain the 
Severi degrees $n_\delta(L)$ of $\delta$-dimensional linear systems in $|L|$. That is, if we define $r_{i,\beta}(\ldots)\in\Z(t)$ by
\beq{bpdef}
\sum_{i=0}^\infty r_{i,\beta\,}(\ldots)q^{1-i}(1+q)^{2i-2}=
\sum_{i=1-h}^\infty\curly P^{red}_{\!i,\beta}(\ldots)q^i,
\eeq
with $(\ldots)$ either of the insertions (\ref{insertS}, \ref{insertX}), then
$$
r_{h-\delta,\beta}(\ldots)=n_\delta(L).\,t^{h-\delta-1+\int_\beta c_1(S)},
$$
given by a universal degree-$\delta$ polynomial in $\beta^2\!,\,\int_\beta c_1(S),\,c_1(S)^2$ and $c_2(S)$.
\end{theorem}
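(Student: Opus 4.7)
The plan is to compute the reduced stable pair generating series $\sum_i \curly P^{red}_{\!i,\beta}(\ldots)q^i$ on a smooth ambient space and extract $r_{h-\delta,\beta}$ via the BPS inversion \eqref{bpdef}, then match with the Gromov-Witten answer $n_\delta(L)\cdot t^{h-\delta-1+\int_\beta c_1(S)}$ already recorded in \eqref{BPSGott}.

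First I would reduce the insertion-cut-down virtual class to a smooth ambient space. By \eqref{linsys2} the insertions $[\gamma_1]\ldots[\gamma_{b_1}][pt]^{\chi(L)-1-\delta}$ cut $[P_i(S,\beta)]^{red}$ down to a class supported on the relative Hilbert scheme $P_i(S,\PP^\delta) = \Hilb^{i-1+h}(\mathcal C/\PP^\delta)$, which is smooth for general $\PP^\delta$ by \cite[Section~4]{KST} thanks to the $(2\delta+1)$-very ampleness of $L$. On this smooth ambient space the reduced virtual cycle is the Euler class of the reduced obstruction bundle capped with the fundamental class, turning $\curly P^{red}_{\!i,\beta}(\ldots)$ into a concrete topological integral. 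For the residue invariant on $X$, I would observe that by Proposition \ref{vample} every curve in $\PP^\delta$ has irreducible support, so the irreducible case of Proposition \ref{irreducible} identifies $P_n(X,\iota_*\beta)^T$ with $P_n(S,\beta)$ over this sublocus, and the moving part of the obstruction theory identified in Proposition \ref{PTloc} supplies the factor $t^{h-\delta-1+\int_\beta c_1(S)}$ via its equivariant Euler class.

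Next I would analyse the generating series fiberwise over $\PP^\delta$. By Proposition \ref{vample} only the $n_\delta(L)$ irreducible $\delta$-nodal curves $D_i$ have geometric genus as low as $h-\delta$; every other fiber is reduced, irreducible and of strictly greater geometric genus. Since all fibers are irreducible, the PT/GV correspondence reduces to a statement about $\sum_n \chi\bigl(\Hilb^n(C)\bigr)q^n$ via the irreducible-case formulas of \cite{PT3}, which produces the BPS factorization on the right-hand side of \eqref{bpdef} with each $D_i$ contributing exactly one unit to the BPS count in genus $h-\delta$, while curves of higher geometric genus contribute only to $r_{g,\beta}$ with $g>h-\delta$. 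Summing the $n_\delta(L)$ nodal contributions and feeding the result through the inversion \eqref{bps-1} then yields $r_{h-\delta,\beta}(\ldots) = n_\delta(L)\cdot t^{h-\delta-1+\int_\beta c_1(S)}$, and Tzeng's theorem (or the stable pair proof of \cite{KST}) expresses $n_\delta(L)$ as a universal degree-$\delta$ polynomial in $\beta^2,\int_\beta c_1(S),c_1(S)^2,c_2(S)$.

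The main obstacle is the global control demanded by the second step: one must show that the reduced virtual integrals $\curly P^{red}_{\!i,\beta}(\ldots)$ assemble into a series whose BPS factorization \eqref{bpdef} isolates the coefficient $r_{h-\delta,\beta}$ cleanly, with the higher-genus contributions $r_{k,\beta}$ for $h-\delta<k\le h$ entering via the binomial corrections of \eqref{bps-1} in precisely the way predicted by the local irreducible PT/GV formulas. Globalising the fiberwise Hilbert-scheme computation to the full relative Hilbert scheme over $\PP^\delta$ and verifying this compatibility is the technical heart of the argument, and is essentially the content of the stable pair approach worked out in detail in \cite{KST} and the sequel \cite{KT2}.
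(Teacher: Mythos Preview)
Your outline follows the same route as the paper, and the key ingredients you list (smoothness of $P_i(S,\PP^\delta)$, irreducibility of curves in $\PP^\delta$, the fiberwise BPS identities of \cite{PT3}) are exactly the ones used. But the step you flag as ``the main obstacle'' is not deferred to \cite{KST} or \cite{KT2}; it is carried out directly in the proof, and your sketch skips the mechanism that makes it work.

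Concretely: on the smooth locus $P_i(S,\PP^\delta)$ the reduced obstruction vanishes, so by Proposition~\ref{PTloc} the fixed obstruction theory has $h^0=\Omega_{P_i}$ and $h^{-1}=H^0(K_S)$, and the virtual normal bundle is its shifted dual tensored with $\mathfrak t$. This is more than ``a factor of $t$'': substituting into the localisation formula gives
\[
\curly P^{red}_{\!i,\beta}(\ldots)=\int_{P_i(S,\PP^\delta)}\frac{c_\bull(\Omega_{P_i}\otimes\mathfrak t)}{c_\bull(H^0(K_S)\otimes\mathfrak t)}
=\left(\int_{P_i(S,\PP^\delta)}c_{\dim P_i(S,\PP^\delta)}(\Omega_{P_i})\right)t^{h-1+\int_\beta c_1(S)-\delta}.
\]
One then uses the conormal sequences of $P_i(S,\PP^{\delta-j})\subset P_i(S,\PP^\delta)\subset P_i$ to expand $c_\bull(\Omega_{P_i})$ inductively as an integral linear combination of signed topological Euler characteristics $(-1)^{\dim}e\big(P_i(S,\PP^{\delta-j})\big)$. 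Only at this point does the fiberwise analysis enter: each such Euler characteristic is a weighted Euler characteristic of the base $\PP^{\delta-j}$, and then \cite[Theorem~5 and Proposition~3.23]{PT3} identify the weight at a reduced irreducible curve $C$ as vanishing unless $i$ lies between its geometric and arithmetic genera, and equal to $1$ at a $\delta$-nodal curve when $i=h-\delta$. This is what kills the $j>0$ terms and leaves $n_\delta(L)$.

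A minor point: your appeal to Proposition~\ref{irreducible} is not quite right. Its first bullet asks that the \emph{class} $\beta$ be irreducible, not that the curves in $\PP^\delta$ are. The argument actually needed is simpler and local: once you have cut down to $\PP^\delta$, every curve there is reduced and irreducible by Proposition~\ref{vample}, so a $T$-fixed stable pair supported on such a curve cannot thicken in the $K_S$ direction and must lie in $P_n(S,\beta)$.
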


\begin{remark}
We use Appendix \ref{Dmitri} to deal with curve classes satisfying only Condition \eqref{condition}. If we work only with classes satisfying \eqref{condition2} then the reduced Gromov-Witten invariants are also defined and Theorem \ref{Gottcha} then shows that $R_{h-\delta,\beta}(\ldots)$ is equal to the same linear combination of reduced stable pair invariants. Thus the reduced MNOP conjecture is true in this special case.
\end{remark}

\begin{remark}
We can use \eqref{bpdef} to define virtual Severi degrees $n_\delta(L)$ outside of the very ample case. In the sequel \cite{KT2}
we show that these virtual numbers are governed by the G\"ottsche polynomials \cite{Got} just as in the very ample case \cite{Tze,KST}. In fact we prove that reduced stable pair invariants of surfaces can be calculated in terms of topological numbers much more generally.
\end{remark}

\begin{proof}
The choice of insertions together with Proposition \ref{vample} ensure the relevant stable pair moduli spaces for $X$ and $S$ are the same. We work in this proof with $S$.

Let $c=\chi(L)-1-\delta$ denote the codimension of $\PP^\delta\subset|L|$, and let $i$ run between $1-h$ and $1-h+\delta$.

Since $P_i(S,\PP^\delta)\subset P_i:=P_i(S,\beta)$ is smooth of the right reduced virtual dimension \eqref{Predvd}
$v-h^{0,1}(S)-c=i-(1-h)+\delta$, the reduced obstruction space of Proposition \ref{PTloc} (restricted to $P_i(S,\PP^\delta)$) vanishes. Thus the ordinary obstruction sheaf (restricted to $P_i(S,\PP^\delta)$) is the constant bundle with fibre $H^2(\O_S)$, and the fixed obstruction theory \eqref{pafix} of $P_i$ (restricted to $P_i(S,\PP^\delta)$) has
$$
h^0((E_X\udot|_{P_S})^{fix})=\Omega_{P_i}, \qquad h^{-1}((E_X\udot|_{P_S})^{fix})
=H^0(K_S).
$$
Therefore by \eqref{pamov} the virtual normal bundle (restricted to $P_i(S,\PP^\delta)$) similarly has
$$
h^1(N^{vir})=\Omega_{P_i}\otimes\mathfrak t,
\qquad h^0(N^{vir})=H^0(K_S)\otimes\mathfrak t,
$$
where $\mathfrak t$ is the one-dimensional representation of $T$ of weight $1$.

Substituting into the virtual localisation formula \eqref{Pres} and using the insertion formula \eqref{linsys2} (and smoothness) we find
\begin{align} \nonumber
\curly P^{red}_{\!i,\beta}\big(S,[\gamma\_1]\ldots[\gamma\_{b_1(S)}][pt]^c\big)
&=\int_{P_i(S,\PP^\delta)}\frac{c_{top}(\Omega_{P_i}\otimes\mathfrak t)}{c_{top}
(H^0(K_S)\otimes\mathfrak t)} \\ \label{brack}
&=\left(\int_{P_i(S,\PP^\delta)}c_{\dim P_i(S,\PP^\delta)}(\Omega_{P_i})\right)
t^\alpha.
\end{align}
Here $\alpha=(\rk\Omega_{P_i}-\dim P_i(S,\PP^\delta))-h^{2,0}(S)$, the first term coming from the numerator, the second from the denominator. This is
$$
\alpha=c+h^{0,1}(S)-h^{2,0}(S)=\chi(L)-\chi(\O_S)-\delta=h-1+\int_\beta c_1(S)-\delta,
$$
as required. So we concentrate on the bracketed integral in \eqref{brack}.
Consider the exact sequence $0\to N^*\to\Omega_{P_i}\to
\Omega_{P_i(S,\PP^\delta)}\to0$ on $P_i(S,\PP^\delta)$, where $N^*$ is the conormal bundle of $\PP^\delta\subset\mathrm{Hilb}_\beta(S)$, which in turn sits inside an exact sequence $0\to\Omega_{\mathrm{Pic}_\beta(S)}|_{\{L\}}\to N^*\to\O(-H)^{\oplus c}\to0$. We obtain
\begin{align*}
\int_{P_i(S,\PP^\delta)}&c_{\bull}(\Omega_{P_i(S,\PP^\delta)})
\left(1-cH+\frac{c(c-1)}2H^2-\ldots\right) \\
&=(-1)^{i+h-1+\delta}e(P_i(S,\PP^\delta)) + \sum_{j=1}^\delta (-1)^j
\left(\!\!\begin{array}{c} c \\ j\end{array}\!\!\right)\int_{P_i(S,\PP^{\delta-j})}
c_{\bull}(\Omega_{P_i(S,\PP^\delta)}).
\end{align*}
Here the $\PP^{\delta-j}$s are generic linear subspaces of $\PP^\delta\subset|L|$ chosen to contain only curves of geometric genus $\ge h-\delta+j$ (cf. Proposition \ref{vample}) and so that the $P_i(S,\PP^{\delta-j})$ are still smooth.

Similarly using the exact sequence $0\to \O(-H)^{\oplus j} \! \to\Omega_{P_i(S,\PP^\delta)} \! \to\Omega_{P_i(S,\PP^{\delta-j})}\to0$ on $P_i(S,\PP^{\delta-j})$, we find inductively that the whole expression can be written in terms of topological Euler characteristics
$$
(-1)^{i+h-1+\delta}\left(e(P_i(S,\PP^\delta))+\sum_{j=1}^\delta a_je(P_i(S,\PP^{\delta-j}))\right),
$$
for some integral coefficients $a_j$.

Therefore the $r_{i,\beta}\in\Z(t)$ are defined by setting $\sum_{i=0}^\infty r_{i,\beta}q^{1-i}(1+q)^{2i-2}$ equal to $t^{h-1+\int_\beta c_1(S)-\delta}$ times
\beq{last}
\sum_{i=1-h}^{1-h+\delta} (-1)^{i+h-1+\delta} \!\left(\!e(P_i(S,\PP^\delta))+\sum_{j=1}^\delta a_je(P_i(S,\PP^{\delta-j}))\!\right)\!q^{i} + O(q^{2-h+\delta}).
\eeq
Evaluating these Euler characteristics fibrewise, we express them as Euler characteristics of the base $\PP^{\delta-j}$s, weighted by the constructible function whose value at a point is the Euler characteristic of the fibre above it. Then by inverting the above formula as in \eqref{bps-1} we also get each $r_{i,\beta}$ as a sum of weighted Euler characteristics of the $\PP^{\delta-j}$s.

By \cite[Theorem 5]{PT3}, the weighting function for $r_{i,\beta}$ at a point $C\in\PP^{\delta-j}$ is zero unless $i$ lies between the arithmetic and geometric genera of the reduced irreducible curve $C$. In particular, for $i=h-\delta$, the function is identically zero on $\PP^{\delta-j}$ for $j>0$, and nonzero on $\PP^\delta$ only at the $\delta$-nodal curves $D_i$. And by \cite[Proposition 3.23]{PT3}, it takes the value 1 on the $D_i$.

So in \eqref{last} only the first terms contribute to $r_{h-\delta,\beta}$; the second terms all cancel to give zero. And the first Euler characteristic contributes the number of $\delta$-nodal curves $D_i$, yielding
$$
r_{h-\delta,\beta}(\ldots)=n_\delta(L).\,t^{h-1+\int_\beta c_1(S)-\delta}
$$
as claimed.
\end{proof}

\appendix
\addtocontents{toc}{\SkipTocEntry}
\section{\\ Reduced obstruction theory for stable pairs revisited, \\
\emph{by Martijn Kool, Dmitri Panov and Richard Thomas}}
\addtocontents{toc}{\protect\contentsline{section}%
  {\protect\tocsection{}{\thesection}%
    {Reduced obstruction theory for stable pairs revisited}}%
  {\thepage}} \label{Dmitri}

\pagestyle{myheadings}
\markleft{M. KOOL, D.PANOV AND R. P. THOMAS}

\subsection{The Hilbert scheme of curves as a zero locus}\hspace{-2mm}\footnote{We now realise this construction was discovered many years ago by D\"urr, Kabanov and Okonek \cite{DKO}. In the next Section A.2 we extend it to stable pairs.}
Let $H_\beta:=\Hilb_\beta(S)$ denote the Hilbert scheme of curves in class $\beta$. We describe an embedding of $H_\beta$ in a smooth space with a bundle over it and a canonical section whose zero locus is precisely $H_\beta$. For now we make an assumption slightly stronger than Condition \eqref{condition}:
\beq{assume}
H^2(L)=0 \quad\mathrm{for\ all\ line\ bundles\ with\ \ }c_1(L)=\beta.
\eeq
Fix a curve\footnote{The constructions of this Section can also be done in projective families of surfaces by picking a divisor $A$ on the whole family which has the required ampleness on each fibre.} $A\subset S$ which is sufficiently ample in the sense that
\beq{cond9}
H^{\ge1}(L(A))=0 \quad\mathrm{for\ all\ line\ bundles\ with\ \ }c_1(L)=\beta.
\eeq
Letting $\gamma=[A]+\beta$, we get an embedding $H_\beta\into H_\gamma$ defined on points by
\begin{eqnarray} \label{+A}
\Hilb_\beta(S) &\into& \Hilb_\gamma(S), \\
C\quad &\mapsto& \ A+C. \nonumber
\end{eqnarray}
At the level of schemes, the map is defined using the usual universal diagram
\beq{univ}
\xymatrix{
\mathcal C_{\ } \ar@{^(->}[r]^(.3)i\ar[dr]_{\pi_H} & S\times\Hilb_\beta(S)
\ar[r]^(.7){\pi_S} \ar[d]^{\pi_H} & S \\
& \Hilb_\beta(S).}
\eeq
The divisor $\mathcal C+\pi_S^*A\subset S\times H_\beta$ is a flat family of divisors in $S$ of class $\gamma$ so has a classifying map from the base $\Hilb_\beta(S)$ to $\Hilb_\gamma(S)$. \medskip

Notice that by \eqref{cond9}, $H_\gamma$ is smooth: it is a projective bundle over $\Pic_\gamma(S)$. Let $\mathcal D\subset S\times H_\gamma$ be the universal divisor, and again use $\pi_H$ to denote the projection to $H_\gamma$. Consider $\pi_{H*}$ applied to the exact sequence
$$
0\To\O(\mathcal D-\pi_S^*A)\To\O(\mathcal D)\To\O(\mathcal D)|_{\pi_S^*A}\To0.
$$
$R^2\pi_{H*}$ of the first term vanishes by the assumption \eqref{assume}, and $R^{\ge1}\pi_{H*}$ of the central term vanishes by \eqref{cond9}. Therefore $R^{\ge1}\pi_{H*}$ of the final term vanishes, and
\beq{vect}
F:=\pi_{H*}\big(\O(\mathcal D)|_{\pi_S^*A}\big)
\eeq
is a vector bundle on $H_\gamma$.

\begin{proposition} \label{zero}
Define the canonical section $\sigma$ of $F\to\Hilb_\gamma(S)$ by pushing down the restriction $s_{\mathcal D}|_{\pi_S^*A}$ of the canonical section of $\O(\mathcal D)$. Then the zero locus of $\sigma$ is the subscheme
$$
A+\Hilb_\beta(S)\subset\Hilb_\gamma(S).
$$
\end{proposition}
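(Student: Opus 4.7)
The plan is to verify the stated equality scheme-theoretically by comparing $T$-points, i.e.\ by showing that for an arbitrary scheme $T$ and a morphism $T\to\Hilb_\gamma(S)$ classifying a flat family $\mathcal{D}_T\subset S\times T$ of effective divisors of class $\gamma$, the following three conditions are equivalent: (a) the map $T\to\Hilb_\gamma(S)$ factors through the zero scheme of $\sigma$; (b) $A\times T\subset\mathcal{D}_T$ scheme-theoretically as subschemes of $S\times T$; (c) the map factors through $A+\Hilb_\beta(S)$ via the embedding \eqref{+A}.

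For the equivalence (a)$\Leftrightarrow$(b), the vanishing assumptions behind \eqref{cond9} and \eqref{vect} persist after base change along $T\to\Hilb_\gamma(S)$, so the formation of both $F$ and $\sigma$ commutes with base change: $F|_T\cong\pi_{T*}(\O(\mathcal{D}_T)|_{A\times T})$ and $\sigma|_T=\pi_{T*}(s_{\mathcal{D}_T}|_{A\times T})$. Since $\pi_T\colon A\times T\to T$ is proper and the relevant $R^{\ge 1}\pi_{T*}$ vanishes, pushforward is faithful on sections, so $\sigma|_T=0$ if and only if $s_{\mathcal{D}_T}|_{A\times T}=0$; this is exactly the assertion that $A\times T$ lies scheme-theoretically in the zero locus $\mathcal{D}_T$ of $s_{\mathcal{D}_T}$.

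For (b)$\Leftrightarrow$(c), one direction is tautological. For the converse, work locally on $S\times T$: the Cartier divisor $A\times T$ is cut out by a single function $a$ pulled back from $S$, and $\mathcal{D}_T$ by a single function $d$. Because $A$ is a Cartier divisor in the smooth $S$ and $S\times T\to S$ is flat, $a$ remains a non-zero-divisor in $\O_{S\times T}$. Scheme-theoretic containment $A\times T\subset\mathcal{D}_T$ says $d\in(a)$, so $d=a\cdot c$ with $c$ uniquely determined; the locus $\mathcal{C}_T\subset S\times T$ cut out by $c$ is therefore a canonically defined effective Cartier divisor with $\mathcal{D}_T=A\times T+\mathcal{C}_T$. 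Flatness of $\mathcal{C}_T\to T$ is checked fibrewise: since $\mathcal{D}_t$ is a Cartier divisor on $S$ for every $t\in T$, $d|_t=a|_t\cdot c|_t$ is a non-zero-divisor, forcing $c|_t$ to be one as well, and the standard criterion for flatness of a divisor cut out by a single equation then gives $T$-flatness of $\mathcal{C}_T$. Hence $\mathcal{C}_T$ classifies a morphism $T\to\Hilb_\beta(S)$ whose composition with \eqref{+A} is the original map, proving (c).

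The main technical obstacle is the scheme-theoretic (not merely set-theoretic) content of step (b)$\Rightarrow$(c): the existence, uniqueness and $T$-flatness of the residual divisor $\mathcal{C}_T$. All three rest on $A$ being a Cartier divisor, so that its local equation $a$ is a non-zero-divisor permitting unambiguous division $d=ac$ that is automatically compatible with base change. Everything else is a standard base-change and functoriality manipulation, and the functorial equivalence (a)$\Leftrightarrow$(c) gives equality of the two closed subschemes of $\Hilb_\gamma(S)$.
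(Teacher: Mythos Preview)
Your proof is correct and follows essentially the same approach as the paper's: both arguments reduce to dividing the defining equation $s_{\mathcal D}$ by $s_A$ to produce the residual divisor, and both identify the zero locus set-theoretically first before addressing the scheme structure. The paper is terser, constructing the inverse morphism $Z(\sigma)\to\Hilb_\beta(S)$ directly over the universal case rather than via $T$-points, and it does not spell out the flatness check for the residual divisor that you carefully verify; your functor-of-points formulation and explicit flatness argument make the scheme-theoretic content more transparent but do not differ in substance.
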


\begin{proof}
By its very definition, $\sigma(D)=0$ for $D\in\Hilb_\gamma(S)$ if and only if $s_D|_A=0$, if and only if $A\subset D$.  This gives the result at the level of sets. In fact scheme theoretically, it is also clear that $\sigma$ vanishes on $A+\Hilb_\beta(S)$. So it is sufficient to produce the inverse morphism $Z(\sigma)\to A+\Hilb_\beta(S)$.

Since $s_{\mathcal D}$ vanishes on the pullback of $A$ to $S\times Z(\sigma)$, we can divide by the pullback of its defining equation $s_A$ to give a divisor in $S\times Z(\sigma)$ whose classifying map $Z(\sigma)\to\Hilb_\beta(S)$ gives the required inverse.
\end{proof}

Letting $I$ denote the ideal of $H_\beta\subset H_\gamma$ we get the diagram
on $Z(\sigma)=H_\beta$:
\beq{Fpo1}
\xymatrix@R=20pt{
F^*|_{H_\beta} \ar[r]^(.45){d\sigma}\ar[d]^\sigma & \Omega_{H_\gamma}|_{H_\beta} \ar@{=}[d] \\ I/I^2 \ar[r]^(.45)d & \Omega_{H_\gamma}|_{H_\beta}.\!}
\eeq
We denote the upper row by the complex $\{F^{-1}\to F^0\}=:F_{red}\udot$ of vector bundles on $H_\beta$. The bottom row is the truncated cotangent complex of $H_\beta$, so we have
$$
F\udot_{red}\to\LL_{\,\Hilb_\beta(S)}.
$$
This is an isomorphism on $h^0=\Omega_{H_\beta}$ and a surjection on $h^{-1}$ because $F^*|_{H_\beta}\to I/I^2$ is onto. \medskip

We will see that $F\udot_{red}$ is the reduced obstruction theory. As a warm up we explain what happens at a point $C\in\Hilb_\beta(S)$. The obvious long exact sequence
\begin{align*}
0\to H^0(\O_C(C))\to & H^0(\O_{A+C}(A+C))\to H^0(\O_A(A+C))\to
\\ & H^1(\O_C(C))\to H^1(\O_{A+C}(A+C))\to H^1(\O_A(A+C))
\end{align*}
is precisely
\beq{dF}
0\to T_CH_\beta\to T_{C+A}H_\gamma\to F|_{\{C+A\}}\to H^1(\O_C(C))\to H^2(\O_S)\to0,
\eeq
using the assumptions (\ref{assume}, \ref{cond9}) and identifying the penultimate term via the exact sequence\footnote{In other words, we identify $H^1(\O_{A+C}(A+C))
\Rt{\sim}H^2(\O_S)$ by the semi-regularity map for $A+C$. But the resulting map $H^1(\O_C(C))\to H^2(\O_S)$ in \eqref{dF} is the semi-regularity map for $C$ itself, as can be seen from the comparison maps between the sequence $0\to\O_S\to\O(A+C)\to\O_{A+C}(A+C)\to0$ and the sequence of subsheaves
$0\to\O_S\to\O(C)\to\O_C(C)\to0$.} $0\to\O_S\to\O(A+C)\to\O_{A+C}(A+C)\to0$.

The first few maps in \eqref{dF} are, respectively, the derivative at $C$ of the inclusion $H_\beta\into H_\gamma$, and the derivative at $C+A$ of the section $s\in\Gamma(F)$. This latter map is exactly the complex $(F\udot_{red})^\vee|_C$ (i.e. the dual of the top line of \eqref{Fpo1}), with cokernel
$$
h^1((F\udot_{red})^\vee)=\ker\big(H^1(\O_C(C))\to H^2(\O_S)\big).
$$
Thus the obstruction space of $F\udot_{red}$ at $C$ is the \emph{reduced} obstruction space: the kernel of the semi-regularity map of \eqref{semireg} from the usual obstruction space $H^1(\O_C(C))$ to $H^2(\O_S)$.  \medskip

We now prove all this for the universal curve $\mathcal C$ over the family $\Hilb_\beta(S)$. The ordinary obstruction theory $F\udot\to\LL_{H_\beta}$, where
$(F\udot)^\vee:=R\pi_{H*}\O_{\mathcal C}(\mathcal C)$, admits the semi-regularity map
\beq{Fdot}
(F\udot)^\vee=R\pi_{H*}\O_{\mathcal C}(\mathcal C)\To R^2\pi_{H*}\O_{S\times H_\beta}[-1]
\eeq
induced by the exact sequence $0\to\O\to\O(\mathcal C)\to\O_{\mathcal C}(\mathcal C)\to0$.

\begin{proposition} \label{A2}
The semi-regularity map fits into an exact triangle intertwining the two perfect obstruction theory maps:
$$
\xymatrix@R=18pt{
(F\udot_{red})^\vee \ar[r] & (F\udot)^\vee\!\! \ar[r] & H^2(\O_S)\otimes\O_{H_\beta}[-1] \\ & \LL_{H_\beta}^\vee.\!\!\!\! \ar[u]\ar[ul]}
$$
\end{proposition}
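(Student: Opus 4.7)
I would build the triangle from an auxiliary short exact sequence on $S\times H_\beta$ obtained by multiplying the curve sequence by the canonical section of $\pi_S^*\O(A)$. Writing $A_H:=A\times H_\beta$, the diagram with exact rows
$$
\xymatrix@R=14pt@C=14pt{
0 \ar[r] & \O \ar[r]\ar@{=}[d] & \O(\mathcal C) \ar[r]\ar[d]^{s_A} & \O_{\mathcal C}(\mathcal C) \ar[r]\ar[d] & 0 \\
0 \ar[r] & \O \ar[r] & \O(\mathcal C+A_H) \ar[r] & \O_{\mathcal C+A_H}(\mathcal C+A_H) \ar[r] & 0
}
$$
has injective middle vertical, so the snake lemma gives a short exact sequence
$$0\to\O_{\mathcal C}(\mathcal C)\to\O_{\mathcal C+A_H}(\mathcal C+A_H)\to\O_{A_H}(\mathcal C+A_H)\to0.$$
Set $G\udot:=R\pi_{H*}\O_{\mathcal C+A_H}(\mathcal C+A_H)$. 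Assumptions \eqref{assume} and \eqref{cond9} make the higher pushforwards of $\O(\mathcal C+A_H)$ and of $\O_{A_H}(\mathcal C+A_H)$ vanish, so applying $R\pi_{H*}$ yields an exact triangle
$$(F\udot)^\vee\to G\udot\to F|_{H_\beta}.$$

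Next I analyse $G\udot$. A direct cohomology computation (using \eqref{cond9} once more) gives $H^0(G\udot)=T_{H_\gamma}|_{H_\beta}$ and $H^1(G\udot)=H^2(\O_S)\otimes\O_{H_\beta}$, so the projection onto top cohomology fits in a triangle
$$T_{H_\gamma}|_{H_\beta}\to G\udot\to H^2(\O_S)\otimes\O_{H_\beta}[-1],$$
and this second arrow is the semi-regularity map of $H_\gamma$, constructed exactly as in \eqref{Fdot}. Naturality of the connecting homomorphism under the vertical arrows of the initial diagram makes the square
$$
\xymatrix@R=14pt{
(F\udot)^\vee \ar[r]\ar[d] & G\udot \ar[d] \\
H^2(\O_S)\otimes\O_{H_\beta}[-1] \ar@{=}[r] & H^2(\O_S)\otimes\O_{H_\beta}[-1]
}
$$
commute, the left column being \eqref{Fdot}. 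The octahedral axiom applied to the composition $(F\udot)^\vee\to G\udot\to H^2(\O_S)\otimes\O_{H_\beta}[-1]$ then assembles the fibres into an exact triangle
$$\mathrm{fib}\bigl((F\udot)^\vee\to H^2(\O_S)\otimes\O_{H_\beta}[-1]\bigr)\to T_{H_\gamma}|_{H_\beta}\to F|_{H_\beta}.$$
Unwinding the snake-lemma boundary identifies the right-hand map with the dual $(d\sigma)^\vee$ of the differential of \eqref{Fpo1} (a tangent vector to $H_\gamma$ at $\mathcal C+A_H$ is a first-order deformation of that divisor, and the boundary sends it to its restriction to $A_H$, which is precisely the derivative of $\sigma=s_{\mathcal D}|_{A_H}$). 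Hence the fibre is $(F\udot_{red})^\vee$, producing the desired triangle.

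For the intertwining with $\LL_{H_\beta}^\vee$, both structure maps come from universal deformation-theoretic constructions on the same universal curve: the map to $(F\udot)^\vee$ is the Atiyah-class map for $\O_{\mathcal C}\subset\O_{S\times H_\beta}$, and the map to $(F\udot_{red})^\vee$ is induced by \eqref{Fpo1}, whose lower row $I/I^2\to\Omega_{H_\gamma}|_{H_\beta}$ is $\LL_{H_\beta}^{\ge -1}$. Functoriality of Atiyah classes under the restriction that defines $\sigma$ makes these compatible with the comparison map built above; equivalently, the composition $\LL_{H_\beta}^\vee\to(F\udot)^\vee\to H^2(\O_S)\otimes\O_{H_\beta}[-1]$ vanishes, since semi-regularity annihilates obstructions arising from deformations within the Noether--Lefschetz locus.

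\textbf{Main obstacle.} The main technical step is the identification of the octahedral boundary $T_{H_\gamma}|_{H_\beta}\to F|_{H_\beta}$ with $(d\sigma)^\vee$, and the analogous compatibility on $h^{-1}$ of the two obstruction-theory maps to $\LL_{H_\beta}$ (the $h^0$ part being formal, as both theories compute $T_{H_\beta}$). Both reduce to tracking the construction of $\sigma$ as restriction of the universal section $s_{\mathcal D}$ to $A_H$ and invoking naturality of Atiyah/Kodaira--Spencer classes with respect to this restriction; the fibrewise calculation \eqref{dF} already confirms the identification on each geometric point, so the task is really to globalise it.
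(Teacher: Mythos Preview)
Your approach is essentially the same as the paper's. Your $G\udot=R\pi_{H*}\O_{\mathcal C+A_H}(\mathcal C+A_H)$ is exactly the paper's $R\pi_{H*}\O_{\mathcal D}(\mathcal D)|_{H_\beta}$, your short exact sequence is the paper's, and your octahedral step is what the paper does by writing out the three-row diagram \eqref{ober2} and taking cones row by row. The identification of the boundary map with $d\sigma$ is also handled in the paper by the commutative diagram \eqref{ober}, which is your ``unwinding the snake-lemma boundary''.

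One point to tighten: for the intertwining, the claim that the composition $\LL_{H_\beta}^\vee\to(F\udot)^\vee\to H^2(\O_S)\otimes\O_{H_\beta}[-1]$ vanishes only gives \emph{existence} of a lift to $(F\udot_{red})^\vee$, not that it agrees with the specific map coming from \eqref{Fpo1}. The paper avoids this ambiguity by exhibiting the factorisation directly: it shows via diagram \eqref{ober} that the obstruction-theory map $\LL_{H_\beta}^\vee\to(F\udot)^\vee$, viewed as a map to the cone on the second row of \eqref{ober2}, already factors through the map of complexes $\{T_{H_\gamma}|_{H_\beta}\to F\}$ from the first row. That is precisely $(F\udot_{red})^\vee$, so no choice of lift is needed. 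Your ``functoriality of Atiyah classes'' is the right ingredient for this, but you should make the factorisation explicit rather than deduce it from vanishing of the composition.
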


\begin{proof} First we recall the obstruction theory of $H_\beta$. Using the diagram \eqref{univ} we get maps
\beq{ccmaps}
\O_{\mathcal C}(-\mathcal C)=\LL_{\,\mathcal C/(S\times H_\beta)}[-1]\To
Li^*\LL_{S\times H_\beta}=\pi_S^*\LL_S\oplus\pi_H^*\LL_{H_\beta}\To
\pi_H^*\LL_{H_\beta}.
\eeq
Their composition gives, by adjunction, the perfect obstruction theory\footnote{This construction, viewing curves as divisors in $S$, coincides with the obstruction theory obtained by thinking of $\Hilb_\beta(S)$ as parameterising pairs $(\O_C,1)$ of a sheaf and a section. The essential point is that the Atiyah class of $\O_{\mathcal C}$ in $\Ext^1(\O_{\mathcal C},\O_{\mathcal C}\otimes\LL_{S\times H_\beta})$ is the canonical morphism in the summand $\Hom(\O_{\mathcal C}(-\mathcal C),Li^*\LL_{S\times H_\beta})$.}
\beq{potHb}
\LL_{H_\beta}^\vee\To R\pi_{H*}\O_{\mathcal C}(\mathcal C).
\eeq
Similar working using $\O_{\mathcal D}(-\mathcal D)$ on $S\times H_\gamma$ gives (recalling that $H_\gamma$ is smooth),
\beq{potHg}
T_{H_\gamma}=\LL_{H_\gamma}^\vee\To R\pi_{H*}\O_{\mathcal D}(\mathcal D).
\eeq

On restriction to $\xymatrix@1{H_\beta\ \ar@{^(->}^{+A}[r] & H_\gamma}$, the divisor $\mathcal D$ pulls back to $\pi_S^*A+\mathcal C$, so we have the exact sequence
$$
0\To\O_{\mathcal C}(\mathcal C)\To\O_{\mathcal D}(\mathcal D)\To
\O_{\pi_S^*A}(\mathcal D)\To0 \quad\mathrm{over}\ H_\beta.
$$
Pushing down to $H_\beta$ and combining with the maps (\ref{potHb}, \ref{potHg}) gives the commutative diagram
\beq{ober}
\xymatrix@=20pt{
\LL^\vee_{H_\beta} \ar[r]\ar[d] & T_{H_\gamma}|\_{H_\beta} \ar[d] \\
R\pi_{H*}\O_{\mathcal C}(\mathcal C) \ar[r] & R\pi_{H*}\O_{\mathcal D}(\mathcal D) \ar[r] & F,}
\eeq
where the bottom row is an exact triangle and the composition $T_{H_\gamma}|\_{H_\beta}
\to F$ is $d\sigma$. Thus we get the diagram
\beq{ober2}
\xymatrix@=16pt{
T_{H_\gamma}|\_{H_\beta} \ar[r]^(.6){d\sigma}\ar[d] & F \ar@{=}[d] \\
R\pi_{H*}\O_{\mathcal D}(\mathcal D) \ar[r]\ar[d] & F \\
R^1\pi_{H*}\O_{\mathcal D}(\mathcal D)[-1],}
\eeq
with verticals exact since $T_{H_\gamma}\Rt{\sim}\pi_{H*}\O_{\mathcal D}(\mathcal D)$. The cone on the first row is $(F\udot_{red})^\vee$ \eqref{Fpo1}, the second is $(F\udot)^\vee$ by \eqref{ober}, and the third is $H^2(\O_S)\otimes\O_{H_\beta}[-1]$ by the exact sequence $0\to\O_{S\times H_\beta}\to\O_{S\times H_\beta}(\mathcal D)\to\O_{\mathcal D}(\mathcal D)\to0$. We get the exact triangle
\beq{close}
\xymatrix@=16pt{
(F\udot_{red})^\vee \ar[d] \\
(F\udot)^\vee \ar[d] & \LL^\vee_{H_\beta} \ar[l]\ar@{.>}[ul] \\
R^2\pi_{H*}\O_{S\times H_\beta}[-1],\!\!\!\!}
\eeq
where we have added in the map \eqref{potHb}.
Though the bottom arrow was constructed from the semi-regularity map for $\mathcal D$, it is the semi-regularity map $(F\udot)^\vee\to
H^2(\O_S)\otimes\O_{H_\beta}[-1]$ for $\mathcal C$ by the commutativity of the diagram of exact triangles
$$
\xymatrix@R=18pt@C=12pt{
R\pi_{H*}\O_{S\times H_\beta}(\mathcal C) \ar[r]^(.47){s_A}\ar[d] &
R\pi_{H*}\O_{S\times H_\beta}(\mathcal D) \ar[d] \\
R\pi_{H*}\O_{\mathcal C}(\mathcal C) \ar[r]^(.41){s_A}\ar[d] & R\pi_{H*}\O_{\mathcal D}(\mathcal D)|_{S\times H_\beta} \ar[d] \\
R\pi_{H*}\O_{S\times H_\beta}[1] \ar@{=}[r] & R\pi_{H*}\O_{S\times H_\beta}[1]}
$$
and the functoriality of truncation.

Finally then we need the dotted arrow in \eqref{close}. Considering the map
$\LL^\vee_{H_\beta}\to(F\udot)^\vee$ of \eqref{potHb} as a morphism to the cone on the second row of the diagram \eqref{ober2}, the diagram \eqref{ober} shows that it factors through the complex $\{T_{H_\gamma}|_{H_\beta}\to F\}$, which is $(F\udot_{red})^\vee$ as required.
\end{proof}

\subsection{The relative Hilbert scheme of points as a zero locus} \label{cut}
Recall that the moduli space of stable pairs is a relative Hilbert scheme of points over $H_\beta$:
$$
P_{1-h+n}(S,\beta)\ \cong\ \Hilb^n(\mathcal C/H_\beta).
$$
Just as we described the base $H_\beta$ as the zero locus of a section of a bundle, we can do the same for the fibres. We use the embedding
\beq{embed}
\Hilb^n(\mathcal C/H_\beta)\ \subset\ S^{[n]}\times H_\beta,
\eeq
where $S^{[n]}:=\Hilb^n(S)$ is smooth because $S$ is a surface. There is a universal subscheme
$$
\mathcal Z\ \subset\ S\times S^{[n]}\times H_\beta\Rt{\pi}S^{[n]}\times H_\beta
$$
pulled back from $S\times S^{[n]}$, and of course a universal curve $\mathcal C$ pulled back from $S\times H_\beta$ inducing a universal line bundle and section $(\O(\mathcal C),s_{\mathcal C})$. This induces a canonical section $\sigma_{\mathcal C}$ of the rank $n$ vector bundle
\beq{budl}
\O(\mathcal C)^{[n]}:=\pi_*\big(\O(\mathcal C)|_{\mathcal Z}\big).
\eeq
Its zeros are the pairs $(Z,C)$ with $Z\subset C\subset S$; in fact by the argument of Proposition \ref{zero} it is scheme theoretically $\Hilb^n(\mathcal C/H_\beta)$ embedded as in \eqref{embed}.

Thus we get a perfect relative obstruction theory in the usual way:
\beq{prot}
\xymatrix{
\hspace{-1cm}\mathcal E\udot:=\big\{(\O(\mathcal C)^{[n]})^*
\ar[r]^(.64){d\sigma_{\mathcal C}}\ar[d]^{\sigma_{\mathcal C}} &
\Omega_{S^{[n]}}\big\}\hspace{-4mm} \ar@{=}[d] \\
\hspace{-33mm}\LL_{\Hilb^n(\mathcal C/H_\beta)/H_\beta}=\big\{I/I^2 \ar[r]^d & \Omega_{S^{[n]}}\big\}.\hspace{-5mm}}
\eeq

\subsection{Identifying the obstruction theories}
Let $P$ denote $P_{1-h+n}(S,\beta)\cong\Hilb^n(\mathcal C/H_\beta)$. We now have a perfect relative obstruction theory $\mathcal E\udot$ for $P/H_\beta$ by \eqref{prot}, and ordinary and reduced perfect obstruction theories $F\udot,\,F\udot_{red}$ for $H_\beta$ (Proposition \ref{A2}) when Condition \eqref{condition} holds.

We would like to combine them to give ordinary and perfect absolute obstruction theories for $P$ under Condition \eqref{condition}, and to know that they coincide with the ordinary and reduced perfect obstruction theories $E\udot,\,E\udot_{red}$ of Proposition \ref{PTloc} when Condition \eqref{condition2} is also satisfied.

That is, we want commutative diagrams of exact triangles:
\beq{combine}
\xymatrix@=18pt{
E\udot \ar@{.>}[r]\ar[d] & \mathcal E\udot \ar@{.>}[r]\ar[d] & F\udot[1] \ar[d] &\mathrm{and}& E\udot_{red} \ar@{.>}[r]\ar[d] & \mathcal E\udot \ar@{.>}[r]\ar[d] & F\udot_{red}[1] \ar[d] \\ 
\LL_P \ar[r] & \LL_{P/H_\beta} \ar[r] & \LL_{H_\beta}[1] &&
\LL_P \ar[r] & \LL_{P/H_\beta} \ar[r] & \LL_{H_\beta}[1].}
\eeq
We will first define the upper exact triangles, then prove exactness of the lower rows, and finally turn to commutativity of the resulting squares.

For ease of exposition, in this section we will work at one point $(F,s)$ of $P$ at a time. As in the rest of the paper, all of the arguments extend over the whole family $P$ in the usual way, modulo more cumbersome notation.

\begin{proposition} \label{A3}
The obstruction theory $E\udot=(R\Hom(I\udot,F))^\vee$ of Proposition \ref{PTloc} sits in a natural exact triangle with the relative obstruction theory $\mathcal E\udot$ of $P/H_\beta$ \eqref{prot} and the obstruction theory $F\udot$ \eqref{Fdot} for $H_\beta$:
$$
F\udot\To E\udot\To\mathcal E\udot.
$$
\end{proposition}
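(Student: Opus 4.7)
The plan is to realize the desired triangle as the pushforward to $P$ of $R\hom(I\udot,-)$ applied to the canonical short exact sequence
$$0\to\O_{\mathcal C}\To\mathbb F\To\O_{\mathcal Z}\to0$$
on $S\times P$, where by abuse of notation $\mathcal C$ now denotes the pullback of the universal curve from $S\times H_\beta$ along the morphism $\pi\colon P\to H_\beta$ sending a stable pair to its scheme-theoretic support, and $\mathcal Z\subset\mathcal C$ is the cokernel sheaf, realized as the universal relative length-$n$ Cartier divisor on $\mathcal C/P$ via the isomorphism $P\cong\Hilb^n(\mathcal C/H_\beta)$ of \cite[Prop.~B.8]{PT3}.

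Applying $R\pi_{P*}R\hom(I\udot,-)$ to this sequence produces the candidate triangle
$$R\pi_{P*}R\hom(I\udot,\O_{\mathcal C})\To(E\udot)^\vee\To R\pi_{P*}R\hom(I\udot,\O_{\mathcal Z}),$$
whose middle term is $(E\udot)^\vee$ by Proposition~\ref{PTloc}. The task is then to identify the outer terms with $\pi^*(F\udot)^\vee=R\pi_{P*}\O_{\mathcal C}(\mathcal C)$ and with $(\mathcal E\udot)^\vee$ respectively.

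For the left-hand term I would use the morphism of triangles induced by the factorization $s\colon\O_{S\times P}\to\O_{\mathcal C}\into\mathbb F$:
$$\xymatrix@R=14pt@C=14pt{I_{\mathcal C}\ar[r]\ar[d] & \O_{S\times P}\ar[r]\ar@{=}[d] & \O_{\mathcal C}\ar[d]\\ I\udot\ar[r] & \O_{S\times P}\ar[r] & \mathbb F.}$$
Applying $R\hom(-,\O_{\mathcal C})$ then yields a comparison map $R\hom(I\udot,\O_{\mathcal C})\To R\hom(I_{\mathcal C},\O_{\mathcal C})=\O_{\mathcal C}(\mathcal C)$ whose cone is $R\hom(\O_{\mathcal Z},\O_{\mathcal C})[1]$; this deviation is controlled using the codimension-$2$ lci structure of $\mathcal Z$ in $S\times P$. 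For the right-hand term, the Cartier structure $\mathcal Z\subset\mathcal C$ gives $R\hom_{\mathcal C}(I_{\mathcal Z/\mathcal C},\O_{\mathcal Z})=\O_{\mathcal Z}(\mathcal Z)$, whose pushforward to $P$ matches (the dual of) the two-term complex $\{(\O(\mathcal C)^{[n]})^*\to\Omega_{S^{[n]}}\}$ of \eqref{prot} defining $\mathcal E\udot$.

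The main obstacle I expect is verifying compatibility of this candidate triangle with the cotangent-complex triangle $\pi^*\LL_{H_\beta}\to\LL_P\to\LL_{P/H_\beta}$, i.e.\ showing that the three obstruction-theory maps assemble into a commutative diagram of triangles. This should follow from naturality of Atiyah classes: the Atiyah class of $I\udot$ on $S\times P$ restricts compatibly both to the Atiyah class of $\O_{\mathcal C}$ (pulled back from $S\times H_\beta$) and to the Atiyah class of $I_{\mathcal Z/\mathcal C}$ on $\mathcal C$, giving commutativity of the squares linking the obstruction-theory triangle to the $\LL$-triangle, analogously to the final diagram of the proof of Proposition~\ref{A2}.
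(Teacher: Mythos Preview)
Your approach has a genuine gap: applying $R\hom(I\udot,-)$ to $0\to\O_{\mathcal C}\to\mathbb F\to\O_{\mathcal Z}\to0$ does \emph{not} produce the claimed outer terms. On the left you correctly compute that the comparison $R\hom(I\udot,\O_{\mathcal C})\to R\hom(I_{\mathcal C},\O_{\mathcal C})\cong\O_{\mathcal C}(\mathcal C)$ has cone $R\hom(\O_{\mathcal Z},\O_{\mathcal C})[1]$, but this is nonzero (it carries both an $\ext^1$ and an $\ext^2$ piece, since $s_{\mathcal C}|_{\mathcal Z}=0$); ``controlled by the lci structure'' is not an argument for where it goes. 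On the right your premise is false: $\mathcal Z\subset\mathcal C$ is \emph{not} Cartier in general---a length-one point at a node of $C$ has non-principal ideal in $\O_C$---so $R\hom_{\mathcal C}(I_{\mathcal Z/\mathcal C},\O_{\mathcal Z})\ne\O_{\mathcal Z}(\mathcal Z)$. More decisively, your proposed right-hand term would be a single rank-$n$ sheaf, whereas $(\mathcal E\udot)^\vee=\{T_{S^{[n]}}\to\O(\mathcal C)^{[n]}\}$ is a genuine two-term complex whose $h^1$ is nonzero exactly when $\Hilb^n(C)$ is singular. In short, your candidate triangle has $(E\udot)^\vee$ in the middle but the wrong objects at the ends.

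The paper's route is essentially different. It first dualises via Lemma~\ref{A4} to rewrite $(E\udot)^\vee\cong R\Hom_S(i_*I_{Z\subset C},\I_Z)[1]$, and then builds a $3\times3$ diagram from the two sequences $0\to i_*I_{Z\subset C}\to\O_C\to\O_Z\to0$ and $0\to\I_Z\to\O_S\to\O_Z\to0$. Extra pieces do appear here too, but now they are $\hom(\O_Z,\O_Z)\cong\hom(\O_C,\O_Z)$ at the bottom and $\ext^2(\O_Z,\O_S)\cong\ext^2(\O_Z,\O_Z)$ at the top, and the natural maps between them are \emph{isomorphisms}; coning them off yields the clean triangle with $(\mathcal E\udot)^\vee$ recognised as $\{\Ext^1(\O_Z,\O_Z)\to\Ext^1(\O_C,\O_Z)\}$. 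That matching of corner terms is the substantive step, and it is only visible after the dualisation. (Your final paragraph addresses the compatibility with the $\LL$-triangle, which is the content of a separate lemma after this proposition, not of Proposition~\ref{A3} itself.)
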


%
%
%

To prove this we need to recall from \cite[Appendix B]{PT3} the construction of the isomorphism between the moduli space of stable pairs and the relative Hilbert scheme of points on the fibres of $\mathcal C$,
\beq{Rhilb}
P_{1-h+n}(S,\beta)\cong\Hilb^n(\mathcal C/H_\beta).
\eeq
Given a pair $(F,s)$ with scheme-theoretic support $C\in H_\beta$, we dualise
$$
0\To\O_C\To F\To Q\To0
$$
considered \emph{as sheaves and maps of sheaves on} $C$. That is, applying $(\ \cdot\ )^*:=\hom_C(\ \cdot\ ,\O_C)$ gives
$$
0\To F^*\To\O_C\To\ext^1_C(Q,\O_C)\To0,
$$
with all higher Ext sheaves zero \cite[Proposition B.5]{PT3}. Therefore $F^*$ is an ideal sheaf on $C$; denoting the corresponding subscheme by $Z\subset C$ we can write the above sequence as
$$
0\To I_{Z\subset C}\To\O_C\To\O_Z\To0.
$$
Then $Z\in\Hilb^nC$ defines our point $(Z,C)\in\Hilb^n(\mathcal C/H_\beta)$. 
\begin{lemma} \label{A4}
The derived dual of $i_*F$ is $i_*(I_{Z\subset C})(C)[-1]$. The derived dual of the complex $I\udot:=\{\O_S\Rt{s}i_*F\}$ is the ideal sheaf of $Z\subset S$ twisted by the line bundle $\O_S(C)$:
$$
(I\udot)^\vee\cong\I_Z(C).
$$
\end{lemma}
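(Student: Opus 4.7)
The plan is to apply Grothendieck--Serre duality for the closed embedding $i\colon C\into S$ of the Cartier divisor $C$, and then compare exact triangles.

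First, since $C\subset S$ is a Cartier divisor, the relative dualizing complex is $i^!\O_S\cong\O_C(C)[-1]$. So Grothendieck duality gives
$$
R\hom_S(i_*F,\O_S)\ \cong\ i_*R\hom_C\bigl(F,\O_C(C)\bigr)[-1]\ \cong\ i_*\bigl(R\hom_C(F,\O_C)\otimes\O_C(C)\bigr)[-1],
$$
where in the last step we factor out the line bundle $\O_C(C)$. By the vanishing $\ext^{\ge1}_C(F,\O_C)=0$ recalled above from \cite[Proposition~B.5]{PT3}, the complex $R\hom_C(F,\O_C)$ is concentrated in degree zero and equals $F^*=I_{Z\subset C}$. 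This proves the first claim.

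For the second claim, I would dualize the defining triangle $i_*F[-1]\to I\udot\to\O_S$ and use the first claim, obtaining the triangle
$$
\O_S\To(I\udot)^\vee\To i_*\bigl(I_{Z\subset C}\bigr)(C).
$$
In parallel, I would twist the short exact sequence $0\to\I_C\to\I_Z\to i_*I_{Z\subset C}\to0$ (which expresses that $Z\subset C\subset S$) by $\O_S(C)$. Since $\I_C=\O_S(-C)$, this yields exactly
$$
0\To\O_S\To\I_Z(C)\To i_*\bigl(I_{Z\subset C}\bigr)(C)\To0.
$$
Matching these two triangles will give $(I\udot)^\vee\cong\I_Z(C)$.

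The main subtlety is checking that the two triangles really coincide canonically (and not just up to abstract isomorphism of the outer terms): one must verify that the map $(I\udot)^\vee\to i_*(I_{Z\subset C})(C)$ produced by duality from the canonical map $\O_S\Rt{s}i_*F$ agrees with the canonical quotient $\I_Z(C)\to i_*(I_{Z\subset C})(C)$. This should follow by tracing through the construction of $Z$ from $(F,s)$: dualizing $s\colon\O_C\to F$ inside $\O_C$ precisely produces the inclusion $I_{Z\subset C}\into\O_C$, and after twisting by $\O_S(C)$ and combining with $\O_S\to\O_S(C)$ one recovers the ideal filtration $\O_S\subset\I_Z(C)\subset\O_S(C)$ of $Z\subset S$. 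A short diagram chase then forces the cones to be identified compatibly, giving the stated isomorphism $(I\udot)^\vee\cong\I_Z(C)$.
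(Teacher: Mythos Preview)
Your proposal is correct and follows essentially the same route as the paper. The first claim is handled identically via Grothendieck duality for the Cartier divisor $C\subset S$. For the second claim, the paper also dualises the defining triangle of $I\udot$, but it does so alongside the triangle $\O_S(-C)\to\O_S\to\O_C$ (mapped into it by $s\colon\O_C\to F$), obtaining a commutative diagram that exhibits $(I\udot)^\vee$ as a subsheaf of $\O_S(C)$; it is then the kernel of $\O_S(C)\to\O_Z(C)$, i.e.\ $\I_Z(C)$. This is exactly the ``diagram chase'' you allude to for resolving the subtlety of matching extension classes, made explicit: rather than comparing two extensions of $i_*(I_{Z\subset C})(C)$ by $\O_S$ abstractly, the paper realises both inside $\O_S(C)$ and reads off the answer.
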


\begin{proof}
Since $C$ is a Cartier divisor in $S$, it is Gorenstein with canonical bundle $K_S(C)|_C$. Therefore Serre duality for the inclusion $i\colon C\into S$ gives
\begin{align*}
(i_*F)^\vee=R\hom(i_*F,\O_S) \cong i_*R\hom&(F,i^!\O_S)\cong
i_*R\hom(F,\O_C(C)[-1]) \\
&\cong i_*(F^*)(C)[-1]\cong i_*(I_{Z\subset C})(C)[-1].
\end{align*}
Dualising the commutative diagram of exact triangles
$$
\xymatrix@=18pt{
\O_S(-C) \ar[r]^(.6){s\_C}\ar[d] & \O_S \ar[r]\ar@{=}[d]<-.5ex> & \O_C \ar[d]^s \\
I\udot \ar[r] & \O_S \ar[r]^s & i_*F}
$$
yields
$$
\xymatrix@=18pt{
\O_S \ar[r]^(.4){s\_C}\ar@{=}[d]<-.5ex> & \O_S(C) \ar[r] & \O_C(C) \\
\O_S \ar[r] & (I\udot)^\vee \ar[r]\ar[u] & i_*I_{Z\subset C}(C). \ar[u]}
$$
The top row is the obvious exact sequence defined by the canonical extension class in $\Ext^1(\O_C(C),\O_S)$. Pulling this back via the right hand arrow $i_*I_{Z\subset C}(C)\to\O_C(C)$ gives the extension class in $\Ext^1(i_*I_{Z\subset C},\O_S)$ of the bottom row. Therefore $(I\udot)^\vee$ is the kernel of the induced map from $\O_S(C)$ to the cokernel of the right hand arrow. Since this arrow is the canonical inclusion, it has quotient $\O_Z(C)$ and $(I\udot)^\vee$ is $\I_Z(C)$ as claimed.
\end{proof}

\begin{proof}[Proof of Proposition \ref{A3}]
By Lemma \ref{A4}, the ordinary deformation-obstruc\-tion complex $(E\udot)^\vee$ at $(F,s)\in P$ can be written
\beq{dualdef}
(E\udot)^\vee:=R\Hom(I\udot,F)=R\Hom(F^\vee,(I\udot)^\vee)=R\Hom_S(i_*I_{Z\subset C},\I_Z)[1]
\eeq
at $(Z\subset C)\in P$.

Via the obvious exact sequences we get a commuting diagram of exact triangles (in which some $i_*$s are suppressed, and all $R\hom$s are taken on $S$),
\beq{140}
\xymatrix@=16pt{
R\hom(\O_C,\O_Z)[-1] \ar[r]\ar[d] & R\hom(\O_C,\I_Z) \ar[r]\ar[d] & 
R\hom(\O_C,\O_S) \ar[d] \\
R\hom(i_*I_Z,\O_Z)[-1] \ar[r]\ar[d] & R\hom(i_*I_Z,\I_Z) \ar[r]\ar[d] &
R\hom(i_*I_Z,\O_S) \ar[d] \\
R\hom(\O_Z,\O_Z) \ar[r] & R\hom(\O_Z,\I_Z)[1] \ar[r] & R\hom(\O_Z,\O_S)[1].\!}
\eeq
The top degree cohomologies of the terms in the bottom right and bottom left corners are the same via the induced homomorphism between them:
\beq{ext2}
\ext^2(\O_Z,\O_S)\Rt{\sim}\ext^2(\O_Z,\O_Z).
\eeq
(For instance, this may be seen by Serre duality and the fact that the obvious map $\hom(\O_Z,\O_Z)
\to\hom(\O_S,\O_Z)$ is an isomorphism.) The same goes for the lowest degree cohomologies of the bottom left and top left terms:
\beq{ext0}
\hom(\O_Z,\O_Z)\Rt{\sim}\hom(\O_C,\O_Z).
\eeq
Thus, in the central horizontal exact triangle, we can take the cone on the map to \eqref{ext2}, then the cone on the map from \eqref{ext0}, to yield a new exact triangle
$$
\left\{\!\!\vcenter{\xymatrix{\ext^1(\O_Z,\O_Z) \ar[d] \\
\ext^1(\O_C,\O_Z)}}\!\!\right\} \hspace{-12mm}
\xymatrix{& \ar[r] & R\hom(i_*I_Z,\I_Z)[1] \ar[r] & R\hom(\O_C,\O_S)[1].}
$$
The vertical arrow is constructed from the restriction map $\O_C\to\O_Z$. Identifying $\ext^1(\O_C,\O_Z)\cong\O_Z(C)$ using $s_C$, it becomes the derivative of $s_C$. Similarly we can identify the last term $R\hom(\O_C,\O_S)$ with $\O_C(C)[-1]$. On applying $R\Gamma$ (or, in the family case, pushing down to the moduli space $P$) we get the exact triangle
$$
\left\{\!\!\vcenter{\xymatrix{T_ZS^{[n]} \ar[d] \\
\O(C)^{[n]}}}\!\!\right\} \hspace{-12mm}
\xymatrix{& \ar[r] & R\Hom(i_*I_Z,\I_Z)[1] \ar[r] & R\Gamma(\O_C(C)),}
$$
with first term $(\mathcal E\udot)^\vee$ of \eqref{prot}. By \eqref{dualdef} we have the required exact triangle
\[
\xymatrix{(\mathcal E\udot)^\vee \ar[r] & (E\udot)^\vee \ar[r] & (F\udot)^\vee.}  
\qedhere
\]
\end{proof}

Since we have been working with \emph{truncated}, rather than full, cotangent complexes, we have to be careful about exactness.

\begin{lemma}
The canonical maps of truncated cotangent complexes
$$
\LL_{H_\beta}\To\LL_P\To\LL_{P/H_\beta}
$$
define an exact triangle.
\end{lemma}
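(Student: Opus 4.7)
The plan is to obtain this as the truncation of Illusie's standard distinguished triangle $Lf^*L_{H_\beta}\to L_P\to L_{P/H_\beta}$ for the morphism $f\colon P\to H_\beta$. The whole point is that $L_{P/H_\beta}$ already lies in cohomological degrees $[-1,0]$, which makes passage to $\tau^{\ge-1}$ harmless.

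First I would confirm that bound. The morphism $f$ factors as the regular closed immersion $P\hookrightarrow B:=S^{[n]}\times H_\beta$ from Section~\ref{cut} (cut out as the zero locus of the section $\sigma_{\mathcal C}$ of the rank-$n$ bundle $\O(\mathcal C)^{[n]}$ of \eqref{budl}, of the expected codimension) followed by the smooth projection $B\to H_\beta$. Then $L_{B/H_\beta}|_P=\Omega_{S^{[n]}}$ sits in degree $0$, while $L_{P/B}$ is the shifted conormal of a regular embedding and so sits in degree $-1$; the composition triangle $L_{B/H_\beta}|_P\to L_{P/H_\beta}\to L_{P/B}$ then forces $L_{P/H_\beta}$ into degrees $[-1,0]$, so in particular $L_{P/H_\beta}=\LL_{P/H_\beta}$.

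Next I would apply the $3\times 3$ (nine) lemma to Illusie's triangle against the canonical truncation triangles $\tau^{<-1}\to\id\to\tau^{\ge-1}$ placed in each column. The vanishing $\tau^{<-1}L_{P/H_\beta}=0$ and the long exact cohomology sequence force the map $\tau^{<-1}(Lf^*L_{H_\beta})\to\tau^{<-1}(L_P)$ to be a quasi-isomorphism, so the top row is itself a distinguished triangle, and the $3\times 3$ lemma returns the desired distinguished triangle
$$\tau^{\ge-1}(Lf^*L_{H_\beta})\To\LL_P\To\LL_{P/H_\beta}.$$

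It remains to identify the first term with what the statement calls $\LL_{H_\beta}$: since derived pullback preserves upper bounds on cohomological support and $\tau^{<-1}L_{H_\beta}$ is supported in degrees $\le-2$, one has $\tau^{\ge-1}(Lf^*L_{H_\beta})\simeq\tau^{\ge-1}(Lf^*\LL_{H_\beta})$, which is the natural incarnation of ``$\LL_{H_\beta}$ pulled back to $P$''. The only real obstacle is the careful $3\times 3$-lemma bookkeeping; once one knows that $P\to H_\beta$ factors as a regular embedding into a smooth cover, everything else is general cotangent-complex theory applied to the zero-locus descriptions set up earlier in the appendix.
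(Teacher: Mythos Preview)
Your approach is correct and takes a genuinely different route from the paper's. The paper works hands-on with the explicit two-term models $\{I/I^2\to\Omega\}$ for each of the three truncated cotangent complexes, using the embeddings of $H_\beta$ and $P$ into the smooth ambients $H_\gamma$ and $S^{[n]}\times H_\gamma$. Exactness then boils down to the ideal-theoretic containment $I\cap\I^2\subseteq I\cdot\I$, which is proved by a local trick: on the open locus where the length-$n$ subscheme $Z$ avoids $A$ one can write $\I=I+J$ with $J$ the ideal of $\Hilb^n(\mathcal D/H_\gamma)$, and flatness of the latter over $H_\gamma$ forces $I\cap J\subseteq I\cdot J$, from which the containment follows; varying the auxiliary divisor $A$ then covers all of $P$.

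The one point you assert without justification is that $P\hookrightarrow S^{[n]}\times H_\beta$ is a \emph{regular} immersion. The phrase ``of the expected codimension'' does not suffice, since $H_\beta$ need not be Cohen--Macaulay or even equidimensional. The correct argument is fibrewise: for each divisor $C\in H_\beta$ one has $\dim\Hilb^n(C)=n$ by the Brian\c{c}on--Iarrobino bound on punctual Hilbert schemes of curves on a smooth surface (precisely the argument in the paper's footnote, applied there to $\mathcal D/H_\gamma$), so the section of $\O(\mathcal C)^{[n]}$ is regular on every fibre; since $S^{[n]}\times H_\beta\to H_\beta$ is flat, the standard local criterion then gives global regularity together with flatness of $f\colon P\to H_\beta$. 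That flatness also makes your final identification honest: $Lf^*=f^*$, so $\tau^{\ge-1}(Lf^*L_{H_\beta})=f^*\LL_{H_\beta}$ on the nose rather than merely ``the natural incarnation''. Note that your argument actually proves more than the lemma asks: it shows the relative perfect obstruction theory $\mathcal E\udot\to\LL_{P/H_\beta}$ of \eqref{prot} is in fact a quasi-isomorphism.
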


\begin{proof}
We need to show that the following triangle of complexes
$$
\big\{I/I^2|_P\to\Omega_{H_\gamma}|_P\big\}\To
\big\{\I/\I^2\to(\Omega_{S^{[n]}}\oplus\Omega_{H_\gamma})|_P\big\}\To
\big\{\mathcal J/\mathcal J^2\to\Omega_{S^{[n]}}|_P\big\}
$$
is exact. Here the ideal of $H_\beta\subset H_\gamma$ is denoted $I$, the ideal of $P\subset S^{[n]}\times H_\gamma$ is $\I$, and the ideal of $P\subset S^{[n]}\times H_\beta$ is $\mathcal J:=\I/(\I\cap I)$. The arrows are the obvious ones, and we have suppressed some (flat) pullbacks. We therefore need to prove the exactness of
$$
0\to I/(I^2+I.\I)\to\I/\I^2\to\mathcal J/\mathcal J^2\to0
$$
on the left. That is, since $I^2+I.\I=I.\I$, we need to show that
\beq{ideals}
I\cap\I^2\subseteq I.\I.
\eeq

First work on the open set in $P\cong\Hilb^n(\mathcal C/H_\beta)\ni(C,Z)$ where the $0$-dimensional subscheme $Z$ is disjoint from $A\cap C$. (Recall that $A$ was a fixed ample divisor used to embed $H_\beta$ into $H_\gamma$ \eqref{+A}.) Then in a neighbourhood in $S^{[n]}\times H_\gamma$ of this open set, we have $\I=I+J$, where $J$ is the ideal sheaf of the subscheme
\beq{RHilbA}
\Hilb^n(\mathcal D/H_\gamma)\subset S^{[n]}\times H_\gamma.
\eeq
(In other words, the locus of pairs $(Z\subset C)$ is locally isomorphic to the locus of pairs $(Z\subset(A\cup C))$ so long as $Z$ stays away from $A\cap C$.) But the relative Hilbert scheme \eqref{RHilbA} is flat\footnote{As in \cite{AIK}, the fact that each fibre has dimension $n$ follows immediately from the fact that the \emph{punctual} Hilbert scheme of length-$r$ subschemes supported at a single point of a smooth surface has dimension $r-1$ \cite{Iar}, so for a curve in a surface it has dimension $\le(r-1)$. Then, since each fibre is cut out of the smooth $2n$-dimensional space $S^{[n]}$ by a section of a rank $n$ vector bundle, this section is regular and the associated Koszul resolution exact. Thus the Hilbert polynomial of each fibre can be calculated in terms of the Chern classes of this bundle on $S$, independently of the fibre. This implies flatness.} over $H_\gamma$
 and $I$ is pulled back from $H_\gamma$, so $I\cap J^2\subseteq I\cap J=I.J$. Therefore
$$
I\cap\I^2=I\cap(I+J)^2=I\cap(I^2+I.J+J^2)=I^2+I.J+I\cap J^2\subseteq
I^2+I.J=I.\I,
$$
giving \eqref{ideals}.

This gives exactness over an open set of $P$. By changing the divisor $A$ we can cover $P$ with open sets on which we get exactness. Since exactness can be checked locally, we are done.
\end{proof}

Combining the map $\mathcal E\udot\to F\udot[1]$ of Proposition \ref{A3} with the maps \eqref{prot} and \eqref{potHb} we get the square
\beq{squ}
\xymatrix{\mathcal E\udot \ar[r]\ar[d] & F\udot[1] \ar[d] \\
\LL_{P/H_\beta} \ar[r] & \LL_{H_\beta}[1].}
\eeq

\begin{lemma} \label{abel}
The square \eqref{squ} commutes.
\end{lemma}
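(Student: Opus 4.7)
The plan is to extend \eqref{squ} to a morphism of exact triangles in which it appears as the rightmost square, namely
$$
\xymatrix@C=14pt@R=14pt{
F\udot \ar[r]\ar[d] & E\udot \ar[r]\ar[d] & \mathcal E\udot \ar[r]\ar[d] & F\udot[1] \ar[d] \\
\LL_{H_\beta} \ar[r] & \LL_P \ar[r] & \LL_{P/H_\beta} \ar[r] & \LL_{H_\beta}[1].
}
$$
The top row is the triangle from Proposition \ref{A3} and the bottom row is the exact triangle of truncated cotangent complexes established in the preceding lemma. By functoriality of the cone, once the two left-hand squares commute the right-hand square, which is exactly \eqref{squ}, commutes automatically.

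The bulk of the work is therefore to verify the compatibility of the three obstruction theory maps with the connecting morphisms of the top row. All three maps have a common geometric origin. Indeed, $F\udot\to\LL_{H_\beta}$ was constructed in Proposition \ref{A2} from the universal divisor $\mathcal C \subset S\times H_\beta$ via the maps \eqref{ccmaps}; the relative theory $\mathcal E\udot\to\LL_{P/H_\beta}$ is constructed identically in \eqref{prot} from the universal pair $(\mathcal Z \subset \mathcal C_P,\,\sigma_{\mathcal C})$; and $E\udot\to\LL_P$ is the standard Atiyah-class map for $I\udot=\{\O_{S\times P}\to i_*\mathbb F\}$. Meanwhile, the connecting arrows $F\udot\to E\udot\to\mathcal E\udot$ produced in Proposition \ref{A3} are induced by applying $R\pi_{P*}R\hom(-,-)$ to the chain of morphisms of sheaves $\O_C \twoheadrightarrow \O_Z$ and $\{\O_S\to\O_C\}\to I\udot$ on $S\times P$.

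The leftmost square then commutes by the naturality of the Atiyah class with respect to morphisms of sheaves (cf.\ \cite[Proposition 3.11]{BuF}, cited in the proof of Proposition \ref{PTloc}): the morphism $F\udot\to E\udot$ is obtained by duality from the $\O_S$-component of $\{\O_S\to\O_C\}\to I\udot$, and this morphism is intertwined with the two Atiyah classes precisely because both arise by cupping with the universal Atiyah class of the appropriate universal object and one is pulled back from the other. The central square expresses the standard compatibility between the absolute and relative obstruction theories: since $\mathcal E\udot$ is, by its very construction as the dual of $R\hom_{\mathcal C_P}(I_{\mathcal Z},\O_{\mathcal Z})$, the quotient of $E\udot$ by the part that sees deformations of $\mathcal C$ itself (namely $F\udot$), the map $E\udot\to\LL_P$ descends to the map $\mathcal E\udot\to\LL_{P/H_\beta}$ after composing with $\LL_P\to\LL_{P/H_\beta}$.

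The main obstacle is purely one of bookkeeping: ensuring that the three separately-constructed obstruction theories really do fit into the enlarged diagram on the nose, rather than up to unspecified higher isomorphism. I would handle this by replacing the universal sheaves on $S\times P$ by quasi-isomorphic complexes of locally free sheaves (or equivalently working with local charts, since, as in the rest of the paper, the statement is pointwise in $P$), so that each obstruction-theory morphism is represented by a genuine map of two-term complexes; then the required commutativities reduce to compatibilities that are visible on the underlying sheaf morphisms $\O_C \twoheadrightarrow \O_Z$ and $I_{Z\subset C}\hookrightarrow \O_C$. These are the same identifications already used in the proof of Proposition \ref{A3}, and inspection of those constructions yields the square.
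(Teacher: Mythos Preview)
Your approach has a structural circularity. The vertical map $E\udot\to\LL_P$ that you place in the middle column does not yet exist at this point in the paper: it is \emph{defined} immediately after Lemma~\ref{abel} as the cone on $\mathcal E\udot[-1]\to F\udot$ together with the map to $\LL_P$ induced precisely by the commutativity of~\eqref{squ}. So you cannot use it as an input. You might try to substitute the Atiyah-class map from Proposition~\ref{PTloc}, but the paper explicitly declines to identify that map with the one produced here (``we have not proven that the map from this complex to $\LL_P$ is the same\ldots''), and establishing the middle square for that map would be at least as hard as the original lemma.

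There is also a gap in the triangulated-category step. Given two exact triangles and three vertical maps with the two left squares commuting, the right square does \emph{not} automatically commute: the axioms only guarantee that \emph{some} fill-in $\mathcal E\udot\to\LL_{P/H_\beta}$ exists making the whole thing a morphism of triangles, not that the particular map~\eqref{prot} already does. Since the lemma asks about specific, previously constructed maps, this non-uniqueness of the cone is fatal to the argument as written.

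The paper's proof avoids both issues by computing the two composites around~\eqref{squ} directly. It first records a general fact about the relative obstruction theory of a zero locus in $A\times B$, then specialises to $A=S^{[n]}$, $B=H_\beta$, $E=\O(\mathcal C)^{[n]}$. The anticlockwise composite is unwound to a chain~\eqref{knifehandchop} via $(\O(\mathcal C)^{[n]})^*[1]$; the clockwise composite is identified via Proposition~\ref{A3} and the top row of~\eqref{140}. Both are then matched through the commutative square~\eqref{doormouse}, which compares the restrictions of $s_{\mathcal C}$ to $\mathcal C$ and to $\mathcal Z$. This is a hands-on verification rather than an abstract cone argument.
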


\begin{proof}
We use the following general fact. Suppose that a scheme $P/B$ over a (possibly singular) base $B$ is cut out of the product $A\times B$ by a section $\sigma$ of a vector bundle $E$. If we further assume that $A$ is smooth then we get a natural induced relative perfect obstruction theory
$$
\xymatrix{
\mathcal E\udot\!\!\ar[d] & \hspace{-1cm}\simeq & E^*|_P \ar[r]^{d\sigma}\ar[d]^\sigma & \Omega_A|_P \ar@{=}[d] \\ \LL_{P/B} & \hspace{-1cm}\simeq & \mathcal J/\mathcal J^2 \ar[r]^(.4)d & \Omega_{A\times B/B}|_P,}
$$
where $\mathcal J\subset\O_{A\times B}$ is the ideal sheaf of $P$.
 
Then the composition of the maps $\mathcal E\udot\to\LL_{P/B}\to\LL_B[1]$ factors through the obvious map $\mathcal E\udot\to E^*|_P[1]$ via the map
$$
E^*|_P\To\LL_B,
$$
which is the composition
$$
E^*|_P\Rt{\sigma}\mathcal J/\mathcal J^2=\LL_{P/(A\times B)}[-1]\To\LL_{A\times B}|_P\To\LL_B.
$$
Here the last map is just projection onto the second factor of $\LL_A\oplus\LL_B$, and as usual we have suppressed some pull back maps.

We use this in the setting of Section \ref{cut}, with $A=S^{[n]},\ B=H_\beta,\ E=\O(\mathcal C)^{[n]}$ and $\sigma=\sigma_{\mathcal C}$. See \eqref{budl}. Then going anticlockwise round \eqref{squ} gives the composition of
\beq{knifehandchop}
\mathcal E\udot\To(\O(\mathcal C)^{[n]})^*[1]\Rt{\sigma_{\mathcal C}}\mathcal J/\mathcal J^2[1]
\To(\LL_{S^{[n]}}\oplus\LL_{H_\beta})|_P[1]\To\LL_{H_\beta}[1].
\eeq
The second arrow is given by the canonical section $\sigma_{\mathcal C}$ of $\O(\mathcal C)^{[n]}$ \eqref{budl}. This is pushed down from the universal 0-dimensional subscheme $\mathcal Z$ inside the universal curve $\mathcal C$ over $P$:
$$
\xymatrix{
\mathcal Z\, \ar[d]_\pi\Into & \mathcal C\, \ar[dl]_\pi\Into & P\times S\, \Into & S^{[n]}\times H_\beta\times S
\\ \,P.}
$$
Namely, $\sigma_{\mathcal C}=\pi_*(s_{\mathcal C}|_{\mathcal Z})$, where $s_{\mathcal C}$ is the section of $\O(\mathcal C)$ cutting out $\mathcal C\subset S^{[n]} \times H_\beta \times S$.

We have the commutative diagram
$$
\xymatrix@=18pt{\O_{\mathcal C}(-\mathcal C) \ar@{=}[r]\ar[d] &
\I_{\mathcal C}/\I_{\mathcal C}^2 \ar[r]\ar[d] & 
\LL_{S^{[n]}\times H_\beta\times S}|_{\mathcal C} \ar[r]\ar[d] &
\pi^*\LL_{H_\beta}|_{\mathcal C} \ar[d] \\
\O_{\mathcal Z}(-\mathcal C) \ar[r]^{s_{\mathcal C}} &
\I_{\mathcal Z}/\I_{\mathcal Z}^2 \ar[r] & 
\LL_{S^{[n]}\times H_\beta\times S}|_{\mathcal Z} \ar[r] &
\pi^*\LL_{H_\beta}|_{\mathcal Z},}
$$
where in the bottom row we have suppressed the pushforward map from $\mathcal Z$ to $\mathcal C$.
Composing horizontally and dualising, by adjunction we get the commutative diagram
\beq{doormouse}
\xymatrix@=18pt{
\LL_{H_\beta}^\vee \ar[r]\ar@{=}[d] & R\pi_*\O_{\mathcal C}(\mathcal C) \ar[d] \\ \LL_{H_\beta}^\vee \ar[r] & \pi_*\O_{\mathcal Z}(\mathcal C),}
\eeq
with bottom row induced by $\pi_*(s_{\mathcal C}|_{\mathcal Z})=\sigma_{\mathcal C}$. Dualising this bottom row gives the composition of the second, third and fourth arrows in \eqref{knifehandchop}.

Going clockwise round \eqref{squ} gives
$$
\mathcal E\udot\To\big(R\pi_{H*}\O_{\mathcal C}(\mathcal C)\big)^\vee[1]\To\LL_{H_\beta}[1],
$$
where the second arrow is the dual of \eqref{potHb}. But this is the top row of \eqref{doormouse}, by construction. The first arrow comes from Proposition \ref{A3}; the top row of \eqref{140} shows it factors through
$$
\mathcal E\udot\To(\O(\mathcal C)^{[n]})^*[1]\To\big(R\pi_*\O_{\mathcal C}(\mathcal C)\big)^\vee[1],
$$
where the second arrow is the dual of $R\pi_*\O_{\mathcal C}(\mathcal C)\to R\pi_*\O_{\mathcal Z}(\mathcal C)=\O(\mathcal C)^{[n]}$.
But this is the right hand vertical arrow in \eqref{doormouse}. By the commutativity of \eqref{doormouse}, then, the two compositions are the same.
\end{proof}

It follows that $\mathrm{Cone}\big(\mathcal E\udot[-1]\to F\udot\big)$ maps to $\LL_P$ giving a perfect obstruction theory. By Proposition \ref{A3} this perfect obstruction theory is the complex
$$
R\Hom(I\udot,F)^\vee
$$
at a point $(F,s)\in P$. We \emph{have not} proven that the map from this complex to $\LL_P$ is the same as the one given by the perfect obstruction theory of Proposition \ref{PTloc}. Though they are surely the same, we don't need this to deduce that the resulting virtual cycles are the same since they depend only on the K-theory class of the 2-term complex. What we really want, however, is the same result for the reduced cycles.

The dual semi-regularity map of \eqref{Fdot} applied to the exact triangle of Proposition \ref{A3} gives the diagram
\beq{delpy}
\xymatrix@=18pt{H^{2,0}(S)[1] \ar@{=}[r]\ar[d] & H^{2,0}(S)[1] \ar[d] \\
F\udot \ar[r] & E\udot \ar[r] & \mathcal E\udot.}
\eeq
Taking cones gives
\beq{delpy2}
\xymatrix@=18pt{F_{red}\udot \ar[r] & E_{red}\udot \ar[r] & \mathcal E\udot,}
\eeq
as desired
in \eqref{combine}. Since $F_{red}\udot$ and $\mathcal E\udot$ are perfect 2-term complexes, so is $E_{red}\udot$.

By Proposition \ref{A2} the map $F\udot\to\LL_{H_\beta}$ \eqref{potHb} descends to $F\udot_{red}\to\LL_{H_\beta}$ to give the perfect obstruction theory of \eqref{Fpo1}. Therefore by Lemma \ref{abel} the perfect relative obstruction theory $\mathcal E\udot\to\LL_{P/H_\beta}$ of \eqref{prot} fits into a commutative diagram
$$
\xymatrix@=18pt{
E\udot_{red} \ar[r]\ar@{.>}[d] & \mathcal E\udot \ar[r]\ar[d] & F\udot_{red}[1] \ar[d] \\ \LL_P \ar[r] & \LL_{P/H_\beta} \ar[r] & \LL_{H_\beta}[1].}
$$
We can fill in the dotted arrow, giving (by the long exact sequence in cohomology) a perfect obstruction theory for $P$. We have not checked that this is the same as the one given by Proposition \ref{PTloc}, though it surely is. But the complexes $E\udot_{red}$ and $(E_{X,red}\udot|_{P_S})^{fix}$ have the same K-theory classes, since they sit inside exact triangles with the same objects. This is enough to ensure that the reduced virtual cycles are the same.

\begin{theorem}
The constructions of this Appendix define a reduced virtual cycle
$$
[P_{1-h+n}(S,\beta)]^{red}\in H_{2v}(P_{1-h+n}(S,\beta)),\qquad
v=h-1+n+\int_\beta c_1(S)+h^{0,2}(S),
$$
whenever Condition \eqref{condition} holds. It coincides with the reduced class \eqref{Predvd} when Condition \eqref{condition2} also holds. When Condition \eqref{assume} holds, its pushforward to the smooth ambient space $H_\gamma\times S^{[n]}$ is
\beq{KT2}
c_{top}(F)\,.\,c_n\big(\O(\mathcal D-\pi^*A)^{[n]}\big),
\eeq
where $F$ is the bundle \eqref{vect}.
\end{theorem}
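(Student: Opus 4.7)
The plan is to assemble the perfect obstruction theories $F_{red}\udot\to\LL_{H_\beta}$ (Proposition~\ref{A2}) on the base and $\mathcal E\udot\to\LL_{P/H_\beta}$ (from \eqref{prot}) on the fibres into an absolute perfect obstruction theory $E_{red}\udot\to\LL_P$ on $P:=P_{1-h+n}(S,\beta)\cong\Hilb^n(\mathcal C/H_\beta)$, check its virtual dimension, identify it with the reduced class of Proposition~\ref{PTloc} when \eqref{condition2} holds, and finally read off the pushforward as a Chern class on $H_\gamma\times S^{[n]}$ when \eqref{assume} holds.

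By Lemma~\ref{abel} the square
$$\xymatrix@R=14pt{\mathcal E\udot\ar[r]\ar[d] & F\udot[1]\ar[d] \\ \LL_{P/H_\beta}\ar[r] & \LL_{H_\beta}[1]}$$
commutes, and composing with the factorisation $F\udot\to F_{red}\udot\to\LL_{H_\beta}$ of Proposition~\ref{A2} yields the analogous commuting square with $F_{red}\udot$ in place of $F\udot$. Pasted against the exact cotangent triangle $\LL_{H_\beta}\to\LL_P\to\LL_{P/H_\beta}$ (exactness proved in the preceding lemma), this defines $E_{red}\udot:=\mathrm{Cone}\bigl(\mathcal E\udot[-1]\to F_{red}\udot\bigr)\to\LL_P$. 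The long exact sequence in cohomology of the defining triangle shows $E_{red}\udot$ is a 2-term perfect obstruction theory with local model
$$\bigl\{(F\oplus\O(\mathcal C)^{[n]})^*|_P\to\Omega_{H_\gamma\times S^{[n]}}|_P\bigr\}.$$
Its rank is $\dim H_\gamma+n-\rk F$; using $\dim H_\gamma=\chi(L(A))-1+h^{0,1}(S)$ and $\rk F=\chi(L(A))-\chi(L)$ under \eqref{cond9}, this simplifies to $\chi(L)-1+h^{0,1}(S)+n$, and by Riemann-Roch on $S$ combined with the adjunction $2h-2=\beta^2-\int_\beta c_1(S)$ equals $v=h-1+n+\int_\beta c_1(S)+h^{0,2}(S)$, producing the reduced virtual cycle $[P]^{red}\in H_{2v}(P)$ via \cite{BF1}.

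Under Condition~\eqref{condition2}, Propositions~\ref{PTloc} and~\ref{A3} combine with the semi-regularity quotient \eqref{delpy}--\eqref{delpy2} to give the exact triangle $F_{red}\udot\to(E_{X,red}\udot|_{P_S})^{fix}\to\mathcal E\udot$. Since $E_{red}\udot$ sits in the same-shape triangle with the same outer terms, it has the same K-theory class as $(E_{X,red}\udot|_{P_S})^{fix}$ in the Grothendieck group of 2-term complexes of vector bundles on $P$. By the K-theoretic invariance of virtual cycles \cite{Pid,Sie} the two reduced virtual classes coincide, matching \eqref{Predvd}. Finally, under Condition~\eqref{assume} the sheaf
$$\O(\mathcal D-\pi_S^*A)^{[n]}:=R^0\pi_*\bigl(\O(\mathcal D-\pi_S^*A)|_{\mathcal Z}\bigr)$$
(with $\mathcal Z\subset S\times H_\gamma\times S^{[n]}$ pulled back from the universal subscheme on $S\times S^{[n]}$) is a vector bundle on the smooth ambient space $H_\gamma\times S^{[n]}$: higher pushforwards vanish by finiteness of $\pi|_{\mathcal Z}$, and by Condition~\eqref{assume} the fibrewise cohomology is controlled. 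Its restriction to $H_\beta\times S^{[n]}$ equals $\O(\mathcal C)^{[n]}$, so the obstruction bundle $(F\oplus\O(\mathcal C)^{[n]})^*|_P$ of $E_{red}\udot$ extends globally to $(F\oplus\O(\mathcal D-\pi_S^*A)^{[n]})^*$ on $H_\gamma\times S^{[n]}$, while $E^0$ is already the restriction of the ambient cotangent bundle. The standard pushforward formula for virtual cycles arising from such a global bundle extension (\cite[\S14]{Ful}, \cite{Sie}) then gives
$$\iota_*[P]^{red}=c_{top}(F)\cdot c_n\bigl(\O(\mathcal D-\pi_S^*A)^{[n]}\bigr).$$

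The main obstacle is the last identification: while $\sigma$ is globally defined on $H_\gamma$, the section $\sigma_{\mathcal C}$ lives only over $H_\beta\times S^{[n]}$ and does not visibly extend as a section of $\O(\mathcal D-\pi_S^*A)^{[n]}$ on the full ambient space. However only the \emph{bundle} needs to extend, not its section, for the Euler-class pushforward formula to apply; alternatively one can obtain the formula by composing the refined Gysin pullbacks associated to the two sections via deformation to the normal cone, which matches the cone structure of $E_{red}\udot$ and yields the product of Chern classes directly.
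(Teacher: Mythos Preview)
Your argument is essentially the paper's own, and the virtual-dimension computation, the K-theory comparison with $(E_{X,red}\udot|_{P_S})^{fix}$, and the Euler-class pushforward via the bundle extension $\O(\mathcal D-\pi_S^*A)^{[n]}$ are all correct. There is, however, one genuine omission.

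The first assertion of the theorem is that the reduced virtual cycle exists under Condition \eqref{condition} alone, whereas every ingredient you invoke --- Proposition \ref{A2}, the complex $F_{red}\udot$, the local model $\{(F\oplus\O(\mathcal C)^{[n]})^*|_P\to\Omega_{H_\gamma\times S^{[n]}}|_P\}$ --- was set up under the stronger hypothesis \eqref{assume} that $H^2(L)=0$ for \emph{all} $L\in\Pic_\beta(S)$. Under \eqref{condition} only, $R^1\pi_{H*}(\O(\mathcal D)|_{\pi_S^*A})$ need not vanish over the non-effective locus, so $F$ is merely a sheaf on $H_\gamma$, not a bundle. You must observe, as the paper does, that \eqref{condition} forces $F$ to be locally free on (hence in a neighbourhood of) $H_\beta\subset H_\gamma$, and that this local freeness near the zero locus is all that is needed for the section $\sigma$ to induce the perfect obstruction theory $F_{red}\udot\to\LL_{H_\beta}$ via \eqref{Fpo1}. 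Without this step your construction of $E_{red}\udot$ tacitly assumes \eqref{assume} throughout, and the theorem's first claim is not established in the stated generality.

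One minor remark: your justification that $\O(\mathcal D-\pi_S^*A)^{[n]}$ is a bundle invokes \eqref{assume}, but in fact the finiteness and flatness of $\mathcal Z\to H_\gamma\times S^{[n]}$ already guarantee this with no cohomological hypothesis; \eqref{assume} is needed only so that $F$ itself is a global bundle on $H_\gamma$, making the product formula \eqref{KT2} meaningful.
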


\begin{proof}
Assuming \eqref{assume}, we can summarise the results of this Appendix as follows. The perfect obstruction theory $E\udot_{red}$ arose from cutting $P$ out of $H_\gamma\times S^{[n]}$ firstly by a section $\sigma$ of the vector bundle $F$ \eqref{vect} (pulled back from $H_\gamma$) followed by a section $\sigma_{\mathcal C}$ of the bundle $\O(\mathcal C)^{[n]}$ over $H_\beta\times S^{[n]}$ \eqref{budl}. The latter bundle extends over $H_\gamma\times S^{[n]}$ as the bundle $\O(\mathcal D-\pi_S^*A)^{[n]}$, even though the section $\sigma_{\mathcal C}$ does not. This is enough to give \eqref{KT2}.

The assumption \eqref{assume} that $H^2(L)=0$ for all $L\in\Pic_\beta(S)$ was required to ensure that the sheaf $F$ of \eqref{vect} was a vector bundle on all of $H_\gamma$. If instead we impose only Condition \eqref{condition} -- that $H^2(L)=0$ for all \emph{effective} $L\in\Pic_\beta(S)$ -- then $F$ is a sheaf on $H_\gamma$ which is locally free on $H_\beta$. It is therefore locally free in a neighbourhood of $H_\beta\subset H_\gamma$.

The global description of $H_\beta$ as the zero locus of the section $\sigma$ of $F$ still holds. Since $F$ is a bundle near $H_\beta$, this is enough to induce the perfect obstruction theory on $H_\beta$. The construction of the reduced class then follows as before.

Since the perfect obstruction theory $E\udot_{red}$ has the same K-theory class as $(E_{X,red}\udot|_{P_S})^{fix}$, when Condition \eqref{condition2} also holds we get the same reduced cycle as we do from the perfect obstruction theory of Proposition \ref{PTloc}.
\end{proof}

In \cite{KT2} we use the global description as the zero locus of a section of a bundle to do computations starting from the formula \eqref{KT2}. A priori then, these calculations require the stronger hypothesis \eqref{assume}.

\medskip
\noindent {\tt{mkool@math.ubc.ca}} \\
\noindent {\tt{richard.thomas@imperial.ac.uk}} \\
\noindent {\tt{dmitri.panov@kcl.ac.uk}} \\

\end{document}